\tikzset{> /.tip = {Latex[round,length=.25cm]}
}
\tikzset{every path/.style={thick}
}
\tikzset{baseline={([yshift=-.5ex]current bounding box.center)} }
\tikzstyle{mid>}=[decoration={markings, mark=at position 0.5 with {\arrow{>}}}, postaction={decorate}]
\tikzstyle{mid<}=[decoration={markings, mark=at position 0.5 with {\arrowreversed{>}}}, postaction={decorate}]
\tikzstyle over=[preaction={draw,line width=5pt,white}]
\def\la{\langle}
\def\ra{\rangle}
\def\lla{\la\!\la}
\def\rra{\ra\!\ra}
\def\Atwo{\mathfrak{sl}_3}
\def\bZ{\mathbb{Z}}
\def\bZq{\mathbb{Z}[q,q^{-1}]}
\def\bG{\mathbb{G}}
\def\bW{\mathbb{W}}
\def\G{\bG}
\def\W{\bW}
\def\edges{\mathbb{E}}
\def\lifts{\mathbb{T}}
\def\xings{e}
\def\scs{v}
\def\edges{e}
\def\a{\alpha}
\def\b{\beta}
\def\th{\theta}
\def\g{\gamma}
\def\sl{\mathfrak {sl}}
\DeclareMathOperator*{\connsum}{\scalerel*{\#}{\sum}}
\definecolor{ForestGreen}{RGB}{34,139,34}
\theoremstyle{plain}
\newtheorem{prop}{Proposition}[section]
\newtheorem{thm}[prop]{Theorem}
\newtheorem{lem}[prop]{Lemma}
\newtheorem*{lem*}{Lemma}
\newtheorem{cor}[prop]{Corollary}
\newtheorem*{cor*}{Corollary}           
\newtheorem*{conv*}{Convention}
\theoremstyle{remark}
\newtheorem{defn}[prop]{Definition}
\newtheorem{example}[prop]{Example}
\newenvironment{rem}
{\pushQED{\qed}\remark}
{\popQED\endremark}
\theoremstyle{plain}
\title{On the quantum $\sl_3$ invariant of positive links}
\date{\today}
\thanks{
	{\em\noindent2020 AMS Subject Classification:} 57K16 (primary), 57K14, 57K10
	\\
	{\em Key words and phrases:}
	positive knots and links, positive braids, fibered positive knots, state graph, quantum invariant, $\mathfrak{sl}_3$ invariant, web diagrams, web calculus. 
}
\author{Matthew Harper}
\address{Department of Mathematics \\
	Michigan State University \\
	619 Red Cedar Rd.\\
	East Lansing, MI 48824
}
\email{mrhmath@proton.me, harpe111@msu.edu}
\author{Efstratia Kalfagianni}
\address{Department of Mathematics \\
	Michigan State University \\
	619 Red Cedar Rd.\\
	East Lansing, MI 48824
}
\email{kalfagia@msu.edu}
\begin{document}
	
	\begin{abstract} We use the skein theory of $\Atwo$-webs to study the properties of the quantum $\sl_3$-link polynomial of positive links.	We give explicit formulae for the three leading terms of the polynomial on positive links in terms of diagrammatic quantities of their positive diagrams. We show that a positive link is fibered if and only the second coefficient of the polynomial is equal to one. We also show that the third coefficient provides obstructions to representing links by positive braids.
	
	\end{abstract}
	\maketitle
		
	\section{Introduction}
	We study   the Reshetikhin-Turaev \cite{RT} quantum link polynomial, corresponding to the 3-dimensional defining representation of the Lie algebra $\sl_3$, for positive links. The invariant of oriented links is a Laurent polynomial in a single variable $q$. We obtain explicit diagrammatic formulae for the three leading coefficients of this invariant and discuss their applications to knot theory.
Our approach to the invariant is through the theory of  $\mathsf{A}_2$-webs as defined by Kuperberg in \cite{Kuperberg}. 

Throughout the paper, we will usually refer to this invariant as the $\sl_3$ polynomial for links $L \subset S^3$. The particular normalization we work with and  its relation to other normalizations appearing in the literature is discussed in the beginning of the next section.

A link $L$ is \emph{positive} if it admits a diagram containing only positive crossings. It is a classical result that  for a positive link $L$ the surface obtained by applying Seifert's algorithm to
any connected positive diagram $D$ of $L$ has maximum Euler characteristic over all Seifert surfaces of $L$. 
We use $\chi(L)$ to denote this Euler characteristic.  
By construction, this surface contains a special spine, denoted by $\G_W\coloneqq  \G_W(D)$ in this paper, which is called the \emph{Seifert graph} of $D$. 

We will use $\scs$ to denote the number of vertices of 
$\G_W$. Also we will use $\edges'$ to denote the number of edges in the reduced graph $\G'_W$, obtained from $\G_W$
by removing all the multiple edges between all the pairs of vertices.
If $L$ admits a positive non-connected diagram, then all the quantities defined above are additive over the connected components of the diagram.
	
To describe our results, for a positive link  $L \subset S^3$, let
	$$\lla L\rra= \g_1 q^n+ \g_2  q^{n-2}+ \g_3 q^{n-4}+ \textup{lower degree terms}$$
	denote the $\Atwo$ polynomial of $L$ in the variable $q$ with integral coefficients $\g_i$. For positive links we  show $\lla L\rra$ is a polynomial in $\mathbb{Z}[q^2,q^{-2}]$.

The main technical result of this paper is the following, where the terminology used is defined in detail in Section \ref{sec:leadthree}. Specifically, we define what it means for a pair of reduced edges to be mixed at a vertex in Definition \ref{def:mixed}.

	\begin{thm}\label{thm.thirdi} For any connected positive diagram $D=D(L)$, we have:
	\begin{enumerate}[(1)]
	\item  the leading degree of $\lla L\rra$  is  $n=2\chi(L)\leq 2$, and
	if $n=2$ then $L$ is the unknot,
	\smallskip
	\item $\g_1=1$ and $\g_2=\scs-\edges'\leq 1$,
	 \smallskip
	 
	 \item  $\g_3={\frac {(\g_2+1)\,  \g_2}{2}} + \mu -\th  $.
	\end{enumerate}
	Here $\scs$ and  $\edges'$ are as defined above,
		$\mu$ is the number of edges in $\G'_W$ that have multiplicity greater than one  in $\G_W$, and $\th$ is the number of pairs of edges in $\G'_W$  which are {mixed at a vertex} in  $\G'_W$.
\end{thm}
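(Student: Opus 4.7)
The plan is to expand $\lla L\rra$ as a state sum obtained by applying Kuperberg's $A_2$-web resolution to each positive crossing of $D$. Writing the resolution of a positive crossing as a $q$-weighted linear combination of the oriented (``$I$'') smoothing and the trivalent (``$H$'') smoothing yields
\[
\lla L\rra \;=\; q^{\kappa\, w(D)} \sum_{S\in\{I,H\}^{|D|}} w_S\, \lla D_S\rra ,
\]
for an appropriate constant $\kappa$, where each $D_S$ is a closed $A_2$-web and the global prefactor comes from the writhe normalization. The \emph{Seifert state} $S_0$, in which every crossing is $I$-resolved, produces the $\scs$ Seifert circles of $D$, so $\lla D_{S_0}\rra = [3]_q^\scs=(q^2+1+q^{-2})^\scs$. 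After incorporating the writhe prefactor, the contribution of $S_0$ to $\lla L\rra$ expands as
\[
q^{2(\scs-|D|)}\Bigl(1+\scs\, q^{-2}+\tfrac{\scs(\scs+1)}{2}\, q^{-4}+\cdots\Bigr).
\]

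For Part (1), I would show that every non-Seifert state $S\ne S_0$ produces a web whose evaluation, multiplied by its writhe-weighted coefficient, has $q$-degree strictly smaller than that of $S_0$. This uses Kuperberg's bigon and square reductions to bound $\deg\lla D_S\rra$, combined with the fact that each $H$-resolution lowers the framing weight by more than any gain achievable from the web evaluation. This establishes $n=2\chi(L)=2(\scs-|D|)$. The inequality $\chi(L)\le 1$ is combinatorial (connectedness of $D$ gives $\scs\le |D|+1$), with equality only if $|D|=0$, forcing $L$ to be the unknot.

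For Parts (2) and (3), I would isolate exactly which states contribute to the $q^{n-2}$ and $q^{n-4}$ coefficients after reduction. Beyond $S_0$, only states with one or two $H$-resolutions survive at these orders. A single-$H$ state at a crossing lying on a reduced edge of $\G'_W$ produces, after bigon/square reduction, a leading contribution of degree $n-2$ with coefficient $-1$; a single-$H$ at a crossing on a multiple edge of $\G_W$ reduces further via a bigon relation and falls to strictly lower order. Summing gives $\gamma_2=\scs-\edges'$. For $\gamma_3$ I would combine the $\binom{\scs+1}{2}$ coming from $S_0$, the $q^{n-4}$ interaction of the single-$H$ contributions with the subleading terms of the Seifert evaluation, and the contributions from double-$H$ states; these last split into cases depending on whether the two $H$'s lie on the same pair of Seifert circles, on two reduced edges sharing a Seifert circle, or on disjoint reduced edges. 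The resulting bookkeeping should yield $\tfrac{(\gamma_2+1)\gamma_2}{2}+\mu-\th$, where each multiple reduced edge produces the $+1$ counted by $\mu$ and each mixed pair at a shared Seifert circle produces the $-1$ counted by $-\th$.

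The principal obstacle is the case analysis in Part (3). The bigon and square relations propagate lower-order corrections through the state sum and cross-couple the contributions of different states; confirming that these corrections collapse into the clean decomposition $\tfrac{(\gamma_2+1)\gamma_2}{2}+\mu-\th$ requires a precise local analysis of every configuration of two $H$-resolutions and, especially, a careful treatment of the ``mixed at a vertex'' condition to verify that each such pair contributes exactly one $-1$.
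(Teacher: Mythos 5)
Your overall architecture (resolve each crossing into oriented/trivalent smoothings, observe that the Seifert state gives $[3]^{\scs}$, and isolate the states surviving at the top three orders) is the same as the paper's, but two of your key claims are wrong and they would sink the computation of $\g_2$ and $\g_3$. First, it is not true that ``beyond $S_0$, only states with one or two $H$-resolutions survive at these orders.'' The correct filtration is not by the number of trivalent resolutions but by the number of \emph{reduced edges} of the state graph (equivalently, the number of distinct pairs of Seifert circles joined by some $W$-resolution): a state with arbitrarily many $W$-resolutions, all lying over a single reduced edge, still evaluates to $(-q)^{-2\xings-\b(s)}[3]^{\scs-1}[2]^{\b(s)}$ and contributes at order $q^{n-2}$ (Lemma \ref{lem.oneW}). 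Relatedly, your claim that a single $H$-resolution on a multiple edge ``falls to strictly lower order'' is false --- every single-$W$ state contributes $-q^{2(\scs-\xings-1)}$ at leading order. The reason $\g_2$ equals $\scs-\edges'$ rather than $\scs-\edges$ is an alternating binomial cancellation $\sum_{j=1}^{k_i}\binom{k_i}{j}(-1)^j=-1$ over all nonempty subsets of the $k_i$ parallel crossings of each reduced edge, not a degree drop of individual states. The same misidentification propagates into your treatment of $\g_3$: the $\mu$ term actually comes from the subleading coefficient of the single-reduced-edge states, and the $-\th$ comes from showing that a mixed pair of reduced edges contributes $0$ instead of the $+1$ that a separated or disjoint pair contributes; in the paper this requires both the web identities for chains of squares (Lemmas \ref{lem.CapSquare}--\ref{lem.TraceSquare}) and the combinatorial cancellation over semi-mixed states (Lemmas \ref{lem.chunksum}--\ref{lem.semisum}). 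An analysis restricted to double-$H$ states cannot see any of this.

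Second, your degree bound for Part (1) --- ``each $H$-resolution lowers the framing weight by more than any gain achievable from the web evaluation'' --- is not correct and glosses over the hardest step. Switching one $O$-resolution to a $W$-resolution multiplies the phase by $-q^{-1}$ (degree $-1$), while the web evaluation can gain degree exactly $+1$, so the net degree change is $0$ or $-2$, not strictly negative. Establishing that the web degree changes by exactly $\pm 1$ under an $OW$-move is the content of Lemma \ref{lem.webdec}, a multi-case induction on the number of edges of the web, and it is what makes the whole degree filtration work; the strict decrease needed for Part (1) only holds for the \emph{first} move away from the all-$O$ state, because that move necessarily merges two circles (Remark \ref{rem.deccond}). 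Without this lemma, or some substitute for it, none of the degree claims in your outline are justified.
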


 Theorem \ref{thm.thirdi} and its proof imply that for any non-trivial positive knot, the polynomial $\lla K\rra$ contains only non-positive powers of the variable $q$.
 In the process of proving the theorem we give a state model reformulation of the $\Atwo$ polynomial and study  how the contribution of each state to various terms of the polynomial change under transition between states by skein moves on webs.

Combining our work here with a result of Futer, Kalfagianni and Purcell \cite{Gutsbook} we obtain the following characterization of fibered positive links. 

\begin{thm}\label{thm.fiberi}  A non-split  positive  link $L$ is fibered if and only if
 $\g_2(L)=1$.
\end{thm}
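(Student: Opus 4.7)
The plan is to translate the condition $\g_2(L) = 1$ into a combinatorial property of the reduced Seifert graph $\G'_W$, and then invoke the fibering criterion for positive links from \cite{Gutsbook}.

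First, since $L$ is non-split, any positive diagram $D = D(L)$ is connected: a disconnected diagram would exhibit $L$ as a split link. Applying Theorem \ref{thm.thirdi}(2) then gives $\g_2(L) = \scs - \edges'$, where $\scs$ is the number of vertices and $\edges'$ is the number of edges of $\G'_W$. Because $D$ is connected, $\G_W$ is connected, and collapsing parallel edges to form $\G'_W$ preserves connectedness, so $\G'_W$ is a connected graph.

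Next, for any connected graph with $v$ vertices and $e$ edges one has $v - e = 1 - b_1 \leq 1$, with equality if and only if the graph is a tree. Applied to $\G'_W$, this yields the equivalence
$$\g_2(L) = 1 \iff \G'_W \text{ is a tree}.$$

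To finish, I would invoke the criterion established in \cite{Gutsbook}, stating that a non-split positive link $L$ is fibered if and only if the reduced Seifert graph $\G'_W$ of any (equivalently, every) positive diagram of $L$ is a tree. Chaining this with the previous equivalence proves the theorem. I do not expect any serious obstacle: Theorem \ref{thm.thirdi} carries all the weight on the quantum invariant side, \cite{Gutsbook} supplies the topological input, and the remaining step is the elementary observation that $v - e$ detects whether a connected graph is a tree. The only subtlety worth mentioning explicitly is the reduction from ``$L$ non-split'' to ``every positive diagram of $L$ is connected,'' so that Theorem \ref{thm.thirdi} applies directly.
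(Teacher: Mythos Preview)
Your proposal is correct and follows essentially the same route as the paper: reduce non-split to a connected positive diagram, use $\g_2 = \scs - \edges'$, observe that a connected graph has $\scs-\edges'=1$ exactly when it is a tree, and appeal to the fibering criterion of \cite{Gutsbook}. The only place the paper is more explicit is in the last step: \cite[Theorem~5.11]{Gutsbook} is phrased for $B$-adequate diagrams and their all-$B$ state graphs, so the paper inserts a short lemma (Lemma~\ref{lem.known}) identifying $\G_W(D)$ with $\G_B(\overline{D})$ and checking that $\overline{D}$ is $B$-adequate; you should make that identification explicit rather than citing \cite{Gutsbook} as if it spoke directly about Seifert graphs of positive diagrams.
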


A particularly interesting class of positive links is the class of links that can be represented as closures of positive braids i.e.\!  products of only positive powers  of the Artin generators of braid groups.
It been known for  a long time that closed positive braids are fibered and that not all positive knots are closed positive braids nor are all positive knots fibered. Positive braids have been studied extensively in low-dimensional topology and the question of determining which links can be represented by positive closed braids has been studied considerably in the literature. 
Our results here have applications to this question. For instance we show the following. The reader is referred to Section \ref{sec:braids} for more results and discussion.
	
\begin{cor}\label{cor.braidsi} Let $K$ be a knot that  decomposes into a connected sum of $p$ prime knots.
	If $K$ can be represented as a closed positive braid, then  $\g_2=1$ and $\g_3=p+1$.
	In particular, if a prime knot is a closed positive braid, then $\g_3=2$.
	\end{cor}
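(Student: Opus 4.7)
The plan is to combine Theorems \ref{thm.thirdi} and \ref{thm.fiberi} with classical combinatorial facts about closed positive braids. Since closed positive braid links are fibered (by Stallings), Theorem \ref{thm.fiberi} gives $\g_2(K) = 1$ at once; equivalently, for any positive braid $\b$ on $n$ strands whose closure is a knot, the Seifert graph is a path on $n$ vertices with multi-edges (one edge between consecutive vertices for each occurrence of the corresponding generator), so $\scs = n$, $\edges' = n - 1$, and $\g_2 = 1$ by Theorem \ref{thm.thirdi}(2). By Theorem \ref{thm.thirdi}(3) with $\g_2 = 1$, proving $\g_3 = p + 1$ reduces to the identity $\mu - \th = p$.

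I would establish this identity by choosing a positive braid for $K$ adapted to the prime decomposition $K = K_1 \# \cdots \# K_p$. By Cromwell's characterization of prime positive braid knots, each $K_i$ admits a positive braid $\b_i$ on $n_i$ strands in which every generator appears at least twice. Gluing the $\b_i$ end-to-end along a single shared strand produces a positive braid $\b$ for $K$ on $n = \sum_i n_i - (p-1)$ strands; in its Seifert graph, every edge of $\G'_W$ inherits multiplicity $\geq 2$ from the corresponding $\b_i$, so $\mu = \edges' = n - 1$. Among the $n - 2$ interior vertices of $\G'_W$, the $p - 1$ gluing vertices are not mixed, because at a gluing vertex the cyclic sequence of bands splits into two cyclic blocks (all bands from one prime piece, then all from the next).

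It remains to show that each of the other $n - 1 - p$ interior vertices (each lying inside some $\b_i$) is mixed, which yields $\th = n - 1 - p$ and hence $\mu - \th = p$. The argument is by contradiction: if some such vertex were not mixed, a cyclic rotation of $\b_i$ would put the $\s_{j-1}$'s and $\s_j$'s incident to the corresponding strand in the block form $\s_{j-1}^{a}\s_j^{b}$, and the commutation relations $\s_k \s_\ell = \s_\ell \s_k$ for $|k-\ell| \geq 2$ would then let one rearrange $\b_i$ as $\b_i^L \b_i^R$ with disjoint generator supports $\{\s_1, \ldots, \s_{j-1}\}$ and $\{\s_j, \ldots, \s_{n_i-1}\}$. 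Its closure would be a nontrivial connect sum (both factors nontrivial since every generator of $\b_i$ appears at least twice), contradicting primality of $K_i$. The main technical obstacle is this rearrangement argument, together with checking that the gluing of braid words on the level of words realizes connect sum on the level of knots.
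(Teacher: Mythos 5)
Your proposal is correct in outline, and it reaches the composite case by a genuinely different route than the paper. The paper proves this statement as part (2) of Theorem \ref{braid}: it first handles the prime case by asserting that primeness of a positive closed braid forces every pair of adjacent reduced edges of the (path-shaped) Seifert graph to be mixed, so that $\mu=\edges'=\scs-1$, $\th=\scs-2$, and $\lambda(K_i)\coloneqq\mu-\th=1$; it then assembles the case $p>1$ by invoking the connected-sum formula of Theorem \ref{thm.connsum}, which is itself proved by induction using the multiplicativity $[3]\lla K_1\# K_2\rra=\lla K_1\rra\lla K_2\rra$ coming from the HOMFLY specialization. You instead work on a single composite braid diagram and compute $\mu-\th=p$ directly, by noting that all $n-1$ reduced edges have multiplicity at least two (so $\mu=n-1$), that the $p-1$ gluing vertices are separated because the bands from consecutive prime pieces occur in two cyclic blocks, and that the remaining $n-1-p$ interior vertices are mixed by primeness of the factors. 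This buys you independence from Theorem \ref{thm.connsum} and from the HOMFLY identity, at the cost of having to justify two braid-theoretic facts that the paper either cites or leaves implicit: that a prime positive braid knot admits a positive braid word with every generator occurring at least twice, and that a separated interior vertex lets you commute the word into a product $\b_i^L\b_i^R$ with disjoint supports whose closure is a nontrivial connected sum. Both facts are true and your sketched commutation argument does go through (the only non-commuting pair across the cut at strand $j$ is $(\s_{j-1},\s_j)$, and the separation hypothesis is exactly what lets you sort the word; nontriviality of each factor follows from the genus formula $\chi=\scs-\xings$ once every generator appears twice), but note that your justification of mixedness at interior vertices is doing precisely the work that the paper compresses into the sentence ``It also implies that each pair of adjacent reduced edges is mixed,'' so the two proofs coincide on the prime case and diverge only in how they propagate to connected sums.
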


The paper is organized as follows.

In Section \ref{sec:defn} we recall the definition of the quantum $\Atwo$  link invariant and specify the conventions and normalization we will adopt throughout the paper.
We also describe a state model approach for computing the invariant and prove a technical lemma for use in subsequent sections. 

In Section \ref{sec:leadtwo} we begin by introducing  \emph{state graphs} which are graphs corresponding to states that we use to  compute the $\Atwo$  link polynomial.
The main result in this section is Theorem  \ref{thm.second} in which we determine  the two leading terms  of the  polynomial for positive links.

In Section \ref{sec:char} we study the question of when a positive link is fibered and we prove Theorem \ref{thm.fiberi}.
In fact we have a stronger version of the theorem, see
Theorem \ref{thm.fiber}.

In Section \ref{sec:leadthree} we prove the main technical result of the paper (Theorem \ref{thm.third}) determining the three leading terms of  the  $\Atwo$  link polynomial  for positive links.
This in particular implies Theorem \ref{thm.thirdi}.

In Section  \ref{sec:split-connect} we study the behavior of the coefficients $ \g_2$ and $ \g_3$ under connected sum and disjoint union of links.

In Section \ref{sec:braids} we discuss obstructions to representing links by positive closed braids and, in particular, we prove Corollary \ref{cor.braidsi}.
We also show that the only non-split links that are represented as positive alternating closed braids  are 
connected sums of  $(2,n)$ torus links.
We also compare our obstructions to previous related work in the literature, most notably the work of Ito \cite{Ito} that has derived obstructions to positive braiding using the 2-variable HOMFLY link polynomial.
\smallskip

\subsection*{Acknowledgement} The research of M.H. is partially supported by  the  NSF/RTG Grant, DMS-2135960, ``Algebraic and Geometric Topology at Michigan State".
The research of E.K. is partially supported by NSF Grant, DMS-2304033.


	\section{The quantum $\sl_3$ invariant of unframed links}\label{sec:defn}

	\subsection{Definition of the invariant}
	
		We consider the Reshetikhin-Turaev quantum invariant of links associated with $(\sl_3,V)$, where $V$ is the defining representation of the Lie algebra of $\sl_3$. Equivalently, $V$ is the representation with highest weight $(1,0)$. We follow the spider or web approach to this invariant introduced by Kuperberg \cite{Kuperberg},
where it is also called the quantum $\mathsf{A}_2$ invariant. This invariant is computed from an oriented link diagram $D$ by first resolving each crossing by the relations
	\begin{align}\label{eq.smoothing}
		\begin{tikzpicture}
			\draw[->] (1,0) to [out=90,in=-90] (0,1);
			\draw[over,->] (0,0) to[out=90,in=-90] (1,1);
		\end{tikzpicture}
		=q^{-2}
		\begin{tikzpicture}
			\draw[->] (0,0) .. controls (.5,.25) and (.5,.75) .. (0,1);
			\draw[->] (1,0) .. controls (.5,.25) and (.5,.75) .. (1,1);
		\end{tikzpicture}-
		q^{-3}
		\begin{tikzpicture}
			\draw[mid>] (0,0) to (.5,.25);
			\draw[mid>] (1,0) to (.5,.25);	
			\draw[mid<] (.5,.25) to (.5,.75);
			\draw[->] (.5,.75) to (0,1);
			\draw[->] (.5,.75) to (1,1);		
		\end{tikzpicture} && 
		\begin{tikzpicture}
			\draw[->] (0,0) to [out=90,in=-90] (1,1);
			\draw[over,->] (1,0) to[out=90,in=-90] (0,1);
		\end{tikzpicture}
		=q^{2}
		\begin{tikzpicture}
			\draw[->] (0,0) .. controls (.5,.25) and (.5,.75) .. (0,1);
			\draw[->] (1,0) .. controls (.5,.25) and (.5,.75) .. (1,1);
		\end{tikzpicture}-
		q^{3}
		\begin{tikzpicture}
			\draw[mid>] (0,0) to (.5,.25);
			\draw[mid>] (1,0) to (.5,.25);	
			\draw[mid<] (.5,.25) to (.5,.75);
			\draw[->] (.5,.75) to (0,1);
			\draw[->] (.5,.75) to (1,1);		
		\end{tikzpicture}\,.
	\end{align}
	
	This produces a $\bZq$-linear combination of webs diagrams, trivalent graphs embedded in $S^2$ where each vertex is either a source or a sink. These diagrams may be simplified according to the square, bubble, and circle moves of Equation \eqref{eq.webrels}. Iterating these moves simplifies any closed web $W$ to a scalar multiple $\lla W\rra$ of the empty diagram and it is independent of the order the moves are applied. Then $\lla D\rra$ is an invariant of the link presented by $D$. The defining relations on webs are as follows:
	\begin{align}
		\label{eq.webrels}
		\begin{tikzpicture}[scale=.75]
			\draw[mid>] (0,0) to (0,1);
			\draw[mid>] (1,1) to (1,0);
			\draw[mid>] (0,0) to (1,0);
			\draw[mid>] (1,1) to (0,1);
			\draw[mid>] (0,0) to (-.5,-.5);
			\draw[mid>] (1.5,-.5) to (1,0);
			\draw[mid>] (-.5,1.5) to (0,1);
			\draw[mid>] (1,1) to (1.5,1.5);
		\end{tikzpicture}
		=
		\begin{tikzpicture}[scale = 0.75,baseline=9]
			\draw[->] (-.5,1.5) .. controls (.5, 1) and (.5,0) ..  (-.5,-.5);
			\draw[,<-] (1.5,1.5) .. controls (.5, 1) and (.5,0) ..  (1.5,-.5);
		\end{tikzpicture}
		+
		\begin{tikzpicture}[scale = 0.75]
			\draw[->] (-.5,1.5) .. controls (0, .5) and (1,.5) ..  (1.5,1.5);
			\draw[,<-] (-.5,-.5) .. controls (0,.5) and (1,.5) ..  (1.5,-.5);
		\end{tikzpicture}\,,
		\quad\quad
		\begin{tikzpicture}[scale = 0.75]
			\draw[->] (0,0) to (0,.5);
			\draw[] (0,.5) to (0,1);
			\draw[] (0,1) .. controls (-.5,1.5) .. (0,2);
			\draw[] (0,1) .. controls (.5,1.5) .. (0,2);
			\draw[->] (0,2) to (0,2.5);
			\draw[] (0,2.5) to (0,3);		
		\end{tikzpicture}
		=[2]~
		\begin{tikzpicture}[scale = 0.75]
			\draw[->] (0,0) to (0,1.5);
			\draw[] (0,1.5) to (0,3);
		\end{tikzpicture}\,,
		\quad\quad
		\begin{tikzpicture}[scale = 0.5]
			\draw[->] (0,0) arc(0:360:1);
		\end{tikzpicture}
		~&=~
		\begin{tikzpicture}[scale = 0.5]
			\draw[<-] (0,0) arc(0:360:1);
		\end{tikzpicture}
		~=[3],
	\end{align}
	where $[2]=q+q^{-1}$ and $[3]=q^2+1+q^{-2}$.
	
	Our conventions differ from both Kuperberg and Ohtsuki \cite{Kuperberg,Ohtsuki}. First, we use the unframed normalization of the invariant, meaning that it respects the first Reidemeister move. The framing factors in their respective normalizations are $q^{-4/3}$ and $A^8$. After adjusting, we recover our convention from Kuperberg's by replacing $q$ with $-q^{-2}$, and setting $A=-q^{1/3}$ for Ohtsuki's convention.
	
	 If $P_L(a,z)$ denotes the HOMFLY polynomial, defined by the relations 
	\begin{equation}\label{eq.skein}
		a^{-1}P_{L_+}(a,z)-aP_{L_-}(a,z)=zP_{L_0}(a,z) \qquad \mbox{and} \qquad P_{\mathsf{unknot}}(a,z)=1
	\end{equation}
	then
	\begin{equation}\label{eq.homfly}
		\lla L\rra = [3]\cdot P_L(q^{-3},q-q^{-1})\,.
	\end{equation}
	\medskip
	\subsection{Computation of $\lla\cdot\rra$ by states}~
	In this subsection we discuss a state sum approach  for computing  the  quantum $\Atwo$ invariant.

	\begin{defn}An oriented link $L$ is called \emph{positive} (resp.\! \emph{negative}) if it has a diagram $D=D(L)$ in which all crossings are positive (resp.\! negative),
	as shown in Figure \ref{f.pnc}. Note that if a link is positive then its mirror image is negative and vice-versa.
	\end{defn}
	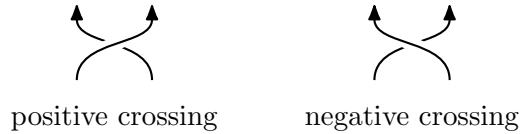
\begin{figure}[h!]
		\begin{tikzpicture}[scale=1]
			\draw[->] (1,0) to [out=90,in=-90] (0,1);
			\draw[over,->] (0,0) to[out=90,in=-90] (1,1);
			\node at (.5,-.5) {positive crossing};
		\end{tikzpicture}
		\quad\quad 
		\begin{tikzpicture}[scale=1]
			\draw[->] (0,0) to [out=90,in=-90] (1,1);
			\draw[over,->] (1,0) to[out=90,in=-90] (0,1);
			\node at (.5,-.5) {negative crossing};
		\end{tikzpicture}
		\caption{A positive crossing and a negative crossing.}\label{f.pnc}
	\end{figure}  
	
	At each crossing of $D$ we may choose either an oriented resolution or a web resolution as shown in Figure \ref{fig.res}. We abbreviate these as $O$ and $W$ resolutions.
		\begin{figure}[h!]
		\begin{tikzpicture}
			\draw[->] (0,0) .. controls (.5,.25) and (.5,.75) .. (0,1);
			\draw[->] (1,0) .. controls (.5,.25) and (.5,.75) .. (1,1);
			\node at (.5,-.5) {$O$-resolution};
		\end{tikzpicture}
		\quad\quad 
		\begin{tikzpicture}
			\draw[mid>] (0,0) to (.5,.25);
			\draw[mid>] (1,0) to (.5,.25);	
			\draw[mid<] (.5,.25) to (.5,.75);
			\draw[->] (.5,.75) to (0,1);
			\draw[->] (.5,.75) to (1,1);		
			\node at (.5,-.5) {$W$-resolution};
		\end{tikzpicture}
		\caption{The oriented ($O$) and web ($W$)  -resolutions of a crossing.}
		\label{fig.res}
	\end{figure}
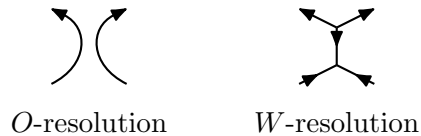
	 A \emph{state} $s\coloneqq s(D)$ is a choice of resolution at each crossing in $D$. The number of states associated to $D$ is $2^{e(D)}$, where $e(D)$ is the number of crossings in $D$.

	 	  There is a bijection between states of $D$ and planar webs obtained by replacing all crossings of $D$ with $O$ and $W$ resolutions, each linear term in the expression of $D$ after applying Equation \eqref{eq.smoothing}. The result of applying state $s$ to $D$ gives a web $\W(s)\coloneqq \W(s)(D)$ that we will call \emph{the web associated to} $s$. There are states for which $\W(s)$ may have several components, including disjoint circles. We denote the \emph{all-$O$ state} by $O\coloneqq O(D)$ and the \emph{all-$W$ state} by $W\coloneqq W(D)$.  We will use $\scs\coloneqq \scs(D)$ to denote the number of circles in $O$ and $\xings\coloneqq \xings(D)$ to denote the number of crossings in $D$. The evaluation of $\W(s)$ using the relations of Equation \eqref{eq.webrels} to an element of $\bZ[q,q^{-1}]$ is denoted $\lla \W(s)\rra$.  
	 
Given a state $s$,	on a fixed diagram $D$,  let $\a_+(s)$ and $\b_+(s)$ (resp.\! $\a_-(s)$ and $\b_-(s)$) denote the number of $O$ and $W$ resolutions of positive (resp.\! negative) crossings in a state $s$ of $D$, respectively. If $D$ is a positive or negative diagram then we simply write $\a(s)$ and $\b(s)$.
	 
	 For each state $s$ we call the quantity
	  $$\phi(s)\coloneqq (-1)^{\b_+(s)+\b_-(s)}q^{-2(\a_+(s)-\a_-(s))-3(\b_+(s)-\b_-(s))},$$ the \emph{phase of $s$}. The phase is computed as the product of coefficients appearing in Equation \eqref{eq.smoothing} over all crossings in a given state. For positive and negative diagrams, 
	  $$\phi(s)=(-1)^{\b(s)}q^{-2\a(s)-3\b(s)} \ \ \ {\rm and} \ \ \phi(s)=(-1)^{\b(s)}q^{2\a(s)+3\b(s)},$$ respectively.
	  The discussion above implies the following.
	 \begin{prop}
	 	Let $D$ be a diagram of a link $L$. Then the quantum $\Atwo$ invariant of $L$ is computed as a sum of state contributions
	 	\begin{align*}
	 		\lla D\rra = \sum_s Y(s), \ \ \mbox{where \ \ \  $Y(s)\coloneqq \phi(s)\lla \W(s)\rra$.}
	 	\end{align*}
	 \end{prop}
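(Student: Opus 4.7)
The claim is essentially a bookkeeping identity, so my plan is to obtain it by direct expansion of the crossing-smoothing relation \eqref{eq.smoothing}, followed by the web evaluation in \eqref{eq.webrels}.

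First, I would fix an arbitrary ordering of the $\xings(D)$ crossings of $D$ and apply \eqref{eq.smoothing} at each one in turn. Each crossing is replaced by a $\bZq$-linear combination of its $O$- and $W$-resolutions: a positive crossing contributes coefficients $q^{-2}$ and $-q^{-3}$ respectively, while a negative crossing contributes $q^{2}$ and $-q^{3}$. Distributing over all crossings produces a sum indexed by the $2^{\xings(D)}$ possible choices of one resolution per crossing, which is by construction the set of states $s$, and the web appearing in the term corresponding to $s$ is precisely $\W(s)$.

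Next, I would compute the scalar coefficient of $\W(s)$ in this expansion. Since the resolutions at distinct crossings are made independently, the coefficient factorizes as a product over the crossings:
\[
(q^{-2})^{\a_+(s)}\,(-q^{-3})^{\b_+(s)}\,(q^{2})^{\a_-(s)}\,(-q^{3})^{\b_-(s)},
\]
which simplifies to $(-1)^{\b_+(s)+\b_-(s)}\, q^{-2(\a_+(s)-\a_-(s))-3(\b_+(s)-\b_-(s))} = \phi(s)$, exactly as defined. Finally, the definition of $\lla D \rra$ prescribes that one reduces the resulting formal combination of webs to a scalar by iterated application of the square, bubble, and circle relations \eqref{eq.webrels}. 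Since this evaluation is $\bZq$-linear and is well-defined (independent of the order of moves) on each individual web $\W(s)$, we conclude $\lla D \rra = \sum_s \phi(s)\,\lla \W(s)\rra = \sum_s Y(s)$.

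There is no substantive obstacle in the proof; it is essentially an exercise in distributivity. The only points that require attention are the correct tally of signs and $q$-powers across the four crossing-type/resolution combinations, and the observation that because web evaluation is $\bZq$-linear and well-defined on each $\W(s)$ separately, one is free to first perform \emph{all} crossing smoothings and only afterwards simplify the webs.
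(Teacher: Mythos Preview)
Your proposal is correct and matches the paper's approach: the paper does not give a separate proof of this proposition but simply notes that ``the discussion above implies the following,'' and your argument is exactly the direct expansion of \eqref{eq.smoothing} followed by linearity of the web evaluation that this remark refers to.
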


	To facilitate our exposition we will refer to the Laurent polynomial $Y(s)$ as the \emph{weight} of the state $s$.
	
	We close the subsection with the following lemma.
	
	\begin{lem}[{\cite[Lemma B.1]{Ohtsuki}}] \label{bigonsquare}
	Let $\W$ be a web embedded on a 2-sphere $S^2$. 	 Then, at least one of the regions of $S^2\setminus \W$ is a bigon or a square.
	\end{lem}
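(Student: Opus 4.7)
My plan is a classical Euler-characteristic count. Let $V$, $E$, $F$ denote the number of vertices, edges, and complementary regions of $\W$ regarded as a graph in $S^2$. Two features of a web drive the argument. First, every vertex is trivalent, which yields $3V=2E$ by summing degrees over all vertices. Second, every vertex is a source or a sink, which forces the orientations of the edges along the boundary of any face to alternate as one walks around the boundary; hence every face-length is even, and in particular at least $2$.

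Let $F_k$ denote the number of faces bounded by exactly $k$ edges, so that $F=\sum_k F_k$ and $2E=\sum_k k\,F_k$ (each edge lies on the boundary of two faces, counted with multiplicity). Substituting $V=2E/3$ into Euler's identity $V-E+F=2$ and clearing denominators, I obtain $6F-2E=12$, which rewrites as
\[
\sum_{k\geq 2,\; k \text{ even}} (6-k)\, F_k \;=\; 12.
\]
The only positive contributions on the left come from $k=2$ (bigons) and $k=4$ (squares), so $4F_2+2F_4\geq 12>0$, whence $F_2+F_4>0$. This is the lemma.

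I do not anticipate any serious obstacle; the argument is essentially bookkeeping. The points requiring some care are the degenerate cases. If $\W$ is disconnected with $c$ components, Euler's formula becomes $V-E+F=1+c$, and the identity above becomes $\sum(6-k)F_k=6+6c\geq 12$, so the conclusion persists (and is strengthened). Components of $\W$ that are pure oriented circles (hence without trivalent vertices) violate $3V=2E$ and must be handled separately; I would first excise them using the circle relation in \eqref{eq.webrels}, then apply the count to the remaining sub-web, which contains at least one trivalent vertex and therefore fits the hypotheses of the calculation above.
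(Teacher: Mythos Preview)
The paper does not supply its own proof of this lemma; it simply cites \cite[Lemma B.1]{Ohtsuki}. Your Euler-characteristic argument is the standard one and is correct for connected webs with at least one trivalent vertex: bipartiteness (sources versus sinks) forces every face degree to be even, trivalence gives $3V=2E$, and $V-E+F=2$ rearranges to $\sum_k(6-k)F_k=12$, whence $4F_2+2F_4\geq 12$ and in particular $F_2+F_4>0$.

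One small point on your treatment of the disconnected case: the identity $\sum_k(6-k)F_k=6+6c$ is correct, but when $c>1$ a face of total degree $4$ can be an annulus with two boundary circles of length $2$ each rather than a disc square, so $F_4>0$ by itself does not literally produce a square region in the sense of the web relations. (Degree-$2$ faces, by contrast, are forced to be discs.) This is easily patched by passing to an innermost component and invoking the connected case, but it is worth a sentence. Your handling of circle components is fine; one should just note that if $\W$ consists \emph{only} of oriented circles then the statement as written is vacuous and one simplifies with the circle relation instead.
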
	
		
	

\subsection{Relating maximum degrees of weights} Our goal in this subsection is to understand how the maximum degree of a weight $Y(s)$ changes when we replace a single $O$-resolution in  $s$ by the $W$-resolution. Proposition \ref{prop.dec} below is important for the proof of the main technical results of the paper
(Theorems \ref{thm.second} and \ref{thm.third})  that provide  formulae for the three leading coefficients
for  the $\Atwo$ invariant of positive links.
We note however that Lemma \ref{lem.webdec}, that is used for the proof of Proposition \ref{prop.dec}, applies to all link diagrams and not just positive ones.

\begin{defn} We say that two states $s$, $s'$ of a link diagram $D$ are related by an \emph{$OW$-move}
if $s'$ is obtained from $s$ by replacing a single $O$-resolution by a $W$-resolution. If the states  $s$ and $s'$ are related by an $OW$-move, the corresponding webs $\W(s)$ and $\W(s')$ differ only in
a disc $E$ that intersects  $\W(s)$ in two coherently oriented arcs. Then $E\cap \W(s')$ is the replacement of the apparent $O$-resolution with a $W$-resolution.
\end{defn}
A simple example of an $OW$-move is given in Figure \ref{fig.OW}.
	
	\begin{figure}[h!]
		\[
		\begin{tikzpicture}[scale=0.5]
			\draw[mid>] (0,0) arc(0:360:1);
			\draw[->] (3,0) arc(360:0:1);
			\draw[dashed] (1.5,0) arc(0:360:1);
			\node[above] at (.5,1) {$E$};
			\node[below] at (.5,-1) {\phantom{$E$}};
		\end{tikzpicture}
		\xmapsto{\mbox{$OW$-move}}
		\begin{tikzpicture}
			\draw[dashed] (1,.5) arc(0:360:.5);
			\node[above] at (.5,1) {$E$};
			\node[below] at (.5,0) {\phantom{$E$}};
			\draw[] (0,0) to (.5,.25);
			\draw[] (1,0) to (.5,.25);	
			\draw[mid<] (.5,.25) to (.5,.75);
			\draw[] (.5,.75) to (0,1);
			\draw[] (.5,.75) to (1,1);		
			\draw[mid<] (0,0) arc(270:90:.5);
			\draw[mid<] (1,0) arc(-90:90:.5);
		\end{tikzpicture}
		\]\vspace{-1\baselineskip}
		\caption{An $OW$-move between two state webs  $\W$(left) and $\W'$(right)  in the disc $E$ whose boundary is indicated by the dotted line.}\label{fig.OW}
			\end{figure}
	
	Let $\deg(\lla \W(s)\rra )$ denote the maximum degrees of the $\Atwo$ invariant of the web associated to $s$.
	
	\begin{lem}\label{lem.webdec}
		Suppose $s$ and $s'$ are two states of $D$ such that $s'$ is obtained from $s$ by an $OW$-move.
		 Then either
		\begin{align*}
			\deg(\lla \W(s')\rra )=\deg(\lla \W(s)\rra)+1 
			&& \mbox{or} &&  
			\deg(\lla \W(s')\rra )=\deg(\lla \W(s)\rra )-1\,.
		\end{align*}
	\end{lem}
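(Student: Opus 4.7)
The $OW$-move replaces, inside the disc $E$, two coherently oriented parallel arcs (the $O$-resolution) with an $H$-shape consisting of those arcs joined by a central edge through two new trivalent vertices (the $W$-resolution). Since $\W(s)$ and $\W(s')$ agree outside $E$, my task is to determine how this purely local change affects $\deg(\lla \W(\cdot)\rra)$.

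My plan is to encode the change via a skein identity. Placing a positive (respectively negative) crossing inside $E$ produces a one-crossing link diagram $L_+$ (respectively $L_-$), whose unique crossing can be resolved via Equation \eqref{eq.smoothing} to express $\lla L_\pm\rra$ as a $\bZq$-linear combination of $\lla \W(s)\rra$ and $\lla \W(s')\rra$. Solving for $\lla \W(s')\rra$ yields
\begin{equation*}
\lla \W(s')\rra \;=\; q\,\lla \W(s)\rra \,-\, q^{3}\,\lla L_+\rra \qquad \text{and} \qquad \lla \W(s')\rra \;=\; q^{-1}\lla \W(s)\rra \,-\, q^{-3}\lla L_-\rra.
\end{equation*}
The conclusion will follow once I show that in at least one of these identities the correction term has strictly smaller degree than the principal term, and that the two identities together force a shift of exactly $+1$ or exactly $-1$ (never $0$ and never larger in absolute value).

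The next step is to bound $\deg(\lla L_\pm\rra)$. Translating to HOMFLY via Equation \eqref{eq.homfly} and applying classical degree inequalities of Morton-Franks-Williams type to the diagrams $L_\pm$ (each of which has exactly one crossing and is otherwise a web), I expect to obtain
\[
\deg(\lla L_+\rra) \;\leq\; \deg(\lla \W(s)\rra) - 2 \qquad \text{or} \qquad \deg(\lla L_-\rra) \;\leq\; \deg(\lla \W(s)\rra) + 2,
\]
whichever inequality is the strict one determining which direction the shift moves. Combined with the identities above, these force
$\deg(\lla \W(s')\rra) \in \{\deg(\lla \W(s)\rra)-1,\; \deg(\lla \W(s)\rra)+1\}$.

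The main obstacle is verifying non-cancellation: in principle, the leading terms $q^{\pm 1}\lla\W(s)\rra$ and $q^{\pm 3}\lla L_\pm\rra$ could cancel exactly, reducing the degree by more than $1$ and invalidating the dichotomy. I anticipate addressing this by an induction on the number of trivalent vertices of $\W(s)$, reducing both $\W(s)$ and $\W(s')$ in parallel using the bigon, square, and circle relations of Equation \eqref{eq.webrels}; such reductions are always available by Lemma \ref{bigonsquare}. The base case, in which $\W(s)$ is a disjoint union of oriented circles, can be handled directly since the evaluation is a product of copies of $[3]=q^{2}+1+q^{-2}$. Controlling the leading coefficient through the inductive step---and showing that the algebraic structure of the $[2]$ and $[3]$ factors arising from reductions is incompatible with the pathological cancellation---will be the technical heart of the argument.
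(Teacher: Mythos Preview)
Your skein identities are correct, but the plan built on them has a genuine gap. The diagrams $L_\pm$ are not link diagrams: outside $E$ they inherit all the trivalent vertices of $\W(s)$, so they are webs with a single crossing. The Morton--Franks--Williams inequalities (and the HOMFLY specialization in Equation~\eqref{eq.homfly}) apply to honest link diagrams, not to such objects, so the step ``I expect to obtain $\deg(\lla L_+\rra)\leq\deg(\lla\W(s)\rra)-2$'' has no justification. Worse, even when the bound happens to hold it is typically an equality and the leading terms \emph{do} cancel: already in the base case $\W(s)=\text{two circles}$, $\W(s')=\text{theta graph}$, one has $\lla L_+\rra=[3]$ and $q\,[3]^2-q^3[3]=[2][3]$, so the top term of $q\lla\W(s)\rra$ is killed exactly by $q^3\lla L_+\rra$. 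Thus neither identity by itself pins down $\deg(\lla\W(s')\rra)$, and you are thrown back precisely onto the ``non-cancellation'' problem you flag as the technical heart.

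Your proposed resolution of that problem---induct on the number of trivalent vertices and reduce both webs in parallel via the relations of Equation~\eqref{eq.webrels}, invoking Lemma~\ref{bigonsquare}---is essentially the paper's own proof, and the skein detour contributes nothing to it. The paper carries this out directly: after simplifying outside $E$, it inducts on the number of edges of $\W(s)$ and splits into cases according to whether the clearing region is bounded by one polygon or two, and then according to whether an adjacent bubble or square meets $E$. In each case the bubble or square relation is applied explicitly, the resulting smaller webs are related to one another by further $OW$-moves, and the inductive hypothesis closes the argument. If you want to complete your proof, you should drop the $L_\pm$ framing and carry out this case analysis in full; the delicate subcases are when the clearing region is bounded by a single $m$-gon with $m\geq 6$, where one must track how resolving an adjacent square or bubble interacts with the $OW$-move.
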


		We simplify notation by writing these two equalities as $\deg( \W(s') )=\deg( \W(s) )+\{-1,1\}$.
		
	\begin{proof} The planar webs	$\W(s)$ and $\W(s')$ only differ in a disc  neighborhood $E$ of the crossing of $D$ where the $OW$-move is applied.
We will refer to the regions of $S^2 \setminus  \W(s)$ as polygons in $\W(s)$.
By Lemma \ref{bigonsquare}, at least one polygon in $ \W(s)$ is either a bigon or a square.	Simplify each web $\W(s)$ and $\W(s')$ outside of $E$ using Equation \eqref{eq.webrels} until no bubbles or squares remain except those that have an edge
that is a component of $E\cap \W(s)$.
Note that the contribution of each of these simplifications to $\deg(\lla \W(s)\rra )$ is the same as the contribution to $\deg(\lla \W(s')\rra )$.
After these simplifications we have pairs of webs that
\begin{enumerate} [(i)]
\item  are related by an $OW$-move inside the disc $E$, and
\smallskip
\item  any bubble or square in the webs must have an edge in $E$.
\end{enumerate}
Without loss of generality assume that the webs  $\W(s)$ and $\W(s')$ are a pair that satisfy (i) and (ii).

We prove the lemma by induction on the number $n$ of edges in $\W(s)$.   We refer to the center region of $E$ in $\W(s)$, between the two oriented arcs as the \emph{clearing region}. See Figure \ref{fig.clearing}. 
	 
	 	\begin{figure}[h!]
	 	\begin{tikzpicture}
	 		\draw[->] (0,0) arc(-45:45:.7);
	 		\draw[->] (1,0) arc(225:135:.7);
	 		\draw[dashed] (0,0) arc(225:585:.7);
	 		\fill[nearly transparent]
	 		(0,0) arc(225:315:.7) --
	 		(1,0) arc(225:135:.7) --
	 		(1,1) arc(45:135:.7) --
	 		(0,1) arc(45:-45:.7);
	 		\node[above] at (.5,1.2) {$E$};
	 	\end{tikzpicture}
	 	\caption{The clearing region between the two arcs of an $O$-resolution. }
	 	\label{fig.clearing}
	\end{figure}

The base case for the induction is $n=2$, seen in Figure \ref{fig.OW}.\ 
In this case,  we have $\lla \W(s)\rra=[3]^2={(q^2+1+q^{-2})}^2$ and $\lla \W(s')\rra=[2][3]=(q+q^{-1})(q^2+1+q^{-2})$, and hence $\deg( \W(s') )=\deg( \W(s) )+\{-1,1\}$\,.	
		
Suppose, inductively, that  the lemma holds for all pairs  $\W(s)$ and $\W(s')$ that satisfy (i) above and
$\W(s)$ has fewer than $n$ edges.
		
For the inductive step, suppose now, that   $\W(s)$ has exactly $n$ edges.  
The proof is  divided into cases according to whether the two arcs of 
$E\cap \W(s)$ belong to different components, say $\W_1$, $\W_2$, of $\W(s)$ (Case (A)) or
to the same component of $\W(s)$ (Case (B)).  Case (A) is easier  to handle since
each of $\W_1, \W_2$ have less that $n$ edges and induction  applies. Case (B) requires a more elaborate analysis:  By our earlier assumptions,
the only bigons or squares in $\W(s)$ must have some  edges on $\W(s)\cap E$.  The case where a bigon or square has two edges in
$\W(s)\cap E$ is considered separately ((B1) and (B2)).  Subcase(B3) deals with the  case where the only bubbles in $\W(s)$ have exactly  one edge
on $\W(s)\cap E$, and it further breaks  down to possibilities (B3a)  and (B3b) according to how the regions bordering the bubble in $\W(s)$ are transformed in $\W(s')$.
Similarly,  Subcase(B4) deals with the  case where the only squares in $\W(s)$ have exactly  one edge
on $\W(s)\cap E$, and it further breaks  down to possibilities (B4a)  and (B4b) according to how the regions bordering the square in $\W(s)$ are transformed in $\W(s')$.\newline

	\noindent \hypertarget{lem.webdec.A}{\textbf{Case(A).}} Suppose that the two arcs of $\W(s)\cap E$ belong on different components,
say $\W_1$, $\W_2$ of $\W(s)$. Each of $\W_1$, $\W_2$ can be simplified entirely outside $E$, using the relations of Equation \eqref{eq.webrels}
until we arrive  at two circles each containing one of the two arcs of $E\cap \W(s)$.
Hence, $\lla \W(s)\rra$ simplifies as a product $c_1c_2[3]^2$ for some $c_1,c_2\in \bZ[q,q^{-1}]$. Similarly $\W(s')$ can be simplified completely away from a resulting bubble intersecting $E$, which determines $\lla \W(s')\rra=c_1c_2[2][3]$. We conclude that $$\deg(\W(s'))=\deg(\W(s))+\{-1,1\}.$$
		\noindent \textbf{Case(B).} Suppose that the arcs of $\W(s)\cap E$ are on the same component of $\W(s)$ and consider the polygon, say $P$,
that contains the clearing region. Note that $P$  must have an even number of vertices to ensure biparticity of the web  (recall that each vertex must be a source or a sink). There are now subcases according to whether the polygon $P$ is a bubble,  a square, or an $m$-gon for $m\geq 6$.\\
				
		\noindent \textbf{Subcase(B1).} If the   polygon $P$  is a bubble, then the local diagrams at $E$ are related by 
		\[
		\W(s')=\begin{tikzpicture}[scale = 0.5]
			\draw[mid>] (0,-1) to (0,0);
			\draw[mid<] (0,0)  .. controls (-.5,1) .. (0,2);
			\draw[mid<] (0,0) .. controls (.5,1) .. (0,2);
			\draw[mid>] (0,2) to (0,3);
			\draw[mid<] (0,3)  .. controls (-.5,4) .. (0,5);
			\draw[mid<] (0,3) .. controls (.5,4) .. (0,5);		
			\draw[->] (0,5) to (0,6);
		\end{tikzpicture}
		=[2]~
		\begin{tikzpicture}[scale = 0.5]
			\draw[mid>] (0,-1) to (0,0);
			\draw[mid<] (0,0)  .. controls (-.75,2.5) .. (0,5);
			\draw[mid<] (0,0)  .. controls (.75,2.5) .. (0,5);
			\draw[->] (0,5) to (0,6);
		\end{tikzpicture}
		=
		[2]\W(s)
		\]
		Thus, $\deg(\W(s'))=\deg(\W(s))+\{-1,1\}$\,.\newline 
		
		\noindent \textbf{Subcase(B2).}
		If  the   polygon $P$ is a square, then, up to a vertical reflection, the local diagrams at $E$ are given by 
		\[
		\W(s')=\begin{tikzpicture}[scale = 0.5]
			\draw[mid>] (0,-1) to (0,0);
			\draw[mid<] (0,0)  to (-1,1);
			\draw[mid>] (-1,1) to (0,2);
			\draw[mid<] (0,0) to (1,1);
			\draw[mid>] (1,1) to (0,2);
			\draw[mid<] (0,2) to (0,3);
			\draw[mid>] (0,3) .. controls (.5,4) .. (0,5);
			\draw[mid>] (0,3) .. controls (-.5,4) .. (0,5);		
			\draw[mid<] (0,5) to (0,6);
			\draw[->] (-1,1) to (-1.5,1);
			\draw[->] (1,1) to (1.5,1);
		\end{tikzpicture}
		=[2]~
		\begin{tikzpicture}[scale = 0.5]
			\draw[mid>] (0,-1) to (0,0);
			\draw[mid<] (0,0)  to (-1,1);
			\draw[mid>] (-1,1) to (0,2);
			\draw[mid<] (0,0) to (1,1);
			\draw[mid>] (1,1) to (0,2);
			\draw[mid<] (0,2) to (0,3);
			\draw[->] (-1,1) to (-1.5,1);
			\draw[->] (1,1) to (1.5,1);
		\end{tikzpicture}
		=
		[2]\W(s)
		\]
		We also have $\deg(\W(s'))=\deg(\W(s))+\{-1,1\}$ in this case. \newline 	
		
		\noindent \textbf{Subcase(B3).} Suppose $P$ is an $m$-gon with $m\geq 6$ and is adjacent to a bubble with one of its edges on one of the components
		of $E\cap \W(s)$. 	
		Applying the $OW$-move, splits $P$ into four polygons in $\W(s')$. One of these regions is a square, with one edge that comes from the edge of  the bubble of $\W(s)$ outside
		$E$.  We will split the argument into two  cases, according to whether one of the remaining three polygons is also a square or not.
		In the diagrams below, the dotted edge indicates additional vertices and outgoing edges of the polygon. \newline 
		
		\noindent \textbf{Subsubcase(B3a).}
		Suppose that  the $OW$-move produces at least one square in $\W(s')$
		that  has no edges that come  from the bubble of $\W(s)$.
		We denote by  $\overline{\W}$ the web obtained by resolving the bubble in $\W(s)$ and omitting the factor $[2]$.  
	We have
		\[
		\W(s)=\begin{tikzpicture}[scale=.5]
			\draw[mid>] (0,-1) to (0,0);
			\draw (0,0) to (.25,.25);
			\draw[dotted] (.25,.25) to (.75,.75);
			\draw (.75,.75) to (1,1);
			\draw[mid>] (1,1) to (1,3);
			\draw[mid<] (1,3) to (0,4);
			\draw[mid>] (0,4) to (-1,3);
			\draw[mid<] (0,0) to (-1,1);
			\draw[mid>] (-1,1) .. controls (-1.5,2) .. (-1,3);
			\draw[mid>] (-1,1) .. controls (-.5,2) .. (-1,3);
			\draw[->] (1,1) to (1.5,.5);
			\draw[mid<] (1,3) to (1.5,3.5);
			\draw[->] (0,4) to (0,5);
		\end{tikzpicture}
		=
		[2]
		\begin{tikzpicture}[scale=.5]
			\draw[mid>] (0,-1) to (0,0);
			\draw (0,0) to (.25,.25);
			\draw[dotted] (.25,.25) to (.75,.75);
			\draw (.75,.75) to (1,1);
			\draw[mid>] (1,1) to (1,3);
			\draw[mid<] (1,3) to (0,4);
			\draw[->] (0,4) to (-1,3);
			\draw[-<] (0,0) to (-1,1);
			\draw (-1,1) to[out=135,in=225] (-1,3);
			\draw[->] (1,1) to (1.5,.5);
			\draw[mid<] (1,3) to (1.5,3.5);
			\draw[->] (0,4) to (0,5);
		\end{tikzpicture}
		=
		[2]\overline{\W}
		\]
		and $\deg(\W(s))=\deg(\overline{\W})+1$. 
		Note that  that  $\overline{\W}$ may still contain
		a square or a bubble with one edge  the second arc of  $E\cap \W(s)$, but this will not affect our analysis below. 
		
		On the other hand,  applying the $OW$-move  in the neighborhood of $E$ in $\W(s)$ we have
		
		\[
		\W(s')=\begin{tikzpicture}[scale=.6]
			\draw[mid>] (0,-1) to (0,0);
			\draw (0,0) to (.25,.25);
			\draw[dotted] (.25,.25) to (.75,.75);
			\draw (.75,.75) to (1,1);
			\draw[mid<] (1,3) to (0,4);
			\draw[mid>] (0,4) to (-1,3);
			\draw[mid<] (0,0) to (-1,1);
			\draw[mid>] (-1,1) .. controls (-1.5,2) .. (-1,3);
			\draw[->] (1,1) to (1.5,.5);
			\draw[mid<] (1,3) to (1.5,3.5);
			\draw[->] (0,4) to (0,5);
			\draw[mid>] (-1,1) to (0,1.5);
			\draw[mid>] (1,1) to (0,1.5);
			\draw[mid<] (0,1.5) to (0,2.5);
			\draw[mid>] (0,2.5) to (-1,3);
			\draw[mid>] (0,2.5) to (1,3);
		\end{tikzpicture}
		=
		\begin{tikzpicture}[scale=.6]
			\draw[mid>] (0,-1) to (0,0);
			\draw (0,0) to (.25,.25);
			\draw[dotted] (.25,.25) to (.75,.75);
			\draw (.75,.75) to (1,1);
			\draw[mid>] (1,1) to (1,3);
			\draw[mid<] (1,3) to (0,4);
			\draw[->] (0,4) to (-1,3);
			\draw[-<] (0,0) to (-1,1);
			\draw (-1,1) to[out=135,in=225] (-1,3);
			\draw[->] (1,1) to (1.5,.5);
			\draw[mid<] (1,3) to (1.5,3.5);
			\draw[->] (0,4) to (0,5);
		\end{tikzpicture}
		~
		+
		~
		\begin{tikzpicture}[scale=.6]
			\draw[mid>] (0,-1) to (0,0);
			\draw (0,0) to (.25,.25);
			\draw[dotted] (.25,.25) to (.75,.75);
			\draw (.75,.75) to (1,1);
			\draw[mid<] (1,3) to (0,4);
			\draw[->] (1,1) to (1.5,.5);
			\draw[mid<] (1,3) to (1.5,3.5);
			\draw[->] (0,4) to (0,5);
			\draw[mid<] (0,0) arc(180:90:1);
			\draw[mid<] (0,4) arc(180:270:1);
		\end{tikzpicture}
		=
		\overline{\W} +[2]
		\underbrace{\begin{tikzpicture}[scale=.6]
				\draw[mid>] (0,-1) to (0,0);
				\draw (0,0) to (.25,.25);
				\draw[dotted] (.25,.25) to (.75,.75);
				\draw (.75,.75) to (1,1);
				\draw[->] (1,1) to (1.5,.5);
				\draw[mid<] (0,0) arc(180:90:1);
				\draw[mid>] (1.5,3.5) .. controls (1,3) and (0,3.5) .. (0,4);
		\end{tikzpicture}}_{\overline \W'}
		=\overline{\W}+[2]\overline{\W}'\,.
		\]
As discussed, $P$ is split into four regions in  $\W(s')$. The middle left of these regions, say $S_1$,  is a square that comes from a bubble of $\W(s)$.	By assumption, at least one of the  remaining  two regions is a square. This is the top square, say $S_2$,  with four edges  indicated in solid lines in  $\W(s')$.
	In the first equality, we apply the  square relation of  Equation \eqref{eq.webrels} to resolve $S_1$ in   $\W(s')$ and transform $S_2$ into a bubble.
	We next apply the  bubble relation of  Equation \eqref{eq.webrels} to further simplify the web.
	Thus,  we get $\deg(\W(s'))=\max(\deg(\overline{\W}), \deg(\overline{\W}')+1)$. 
	
	Now notice that by performing an $OW$-move on  $\overline{\W}'$ in a neighborhood disc if the dashed arc illustrated in the left hand side panel of the equation below, we obtain $\overline{\W}$:
		\[
	\overline{\W}'=
	\begin{tikzpicture}[scale=.5]
		\draw[mid>] (0,-1) to (0,0);
		\draw (0,0) to (.25,.25);
		\draw[dotted] (.25,.25) to (.75,.75);
		\draw (.75,.75) to (1,1);
		\draw[->] (1,1) to (1.5,.5);
		\draw[mid<] (0,0) arc(180:90:1) node[midway] (B) {};
		\draw[mid>] (1.25,3.5) .. controls (.75,3) and (-.25,3.5) .. (-.25,4) node[midway] (A) {};
		\draw[dashed,bend right=20] (A.south) to (B.north);
	\end{tikzpicture}
	\xmapsto{\mbox{$OW$-move}}
	\begin{tikzpicture}[scale=.5]
		\draw[mid>] (0,-1) to (0,0);
		\draw (0,0) to (.25,.25);
		\draw[dotted] (.25,.25) to (.75,.75);
		\draw (.75,.75) to (1,1);
		\draw[mid>] (1,1) to (1,3);
		\draw[mid<] (1,3) to (0,4);
		\draw[->] (0,4) to (-1,3);
		\draw[-<] (0,0) to (-1,1);
		\draw (-1,1) to[out=135,in=225] (-1,3);
		\draw[->] (1,1) to (1.5,.5);
		\draw[mid<] (1,3) to (1.5,3.5);
		\draw[->] (0,4) to (0,5);
	\end{tikzpicture}
	=\overline{\W}\,.
	\]
	Since $\overline{\W}'$ has fewer than $n$ edges we may apply the inductive hypothesis and so $\deg(\overline{\W}')=\deg(\overline{\W})+\{-1,1\}$. Because the coefficients in $q$ for any crossingless web are nonnegative integers, we now have 
		\begin{align*}
			\deg(\W(s'))&=\max(\deg(\overline{\W}),\deg(\overline{\W})+\{0,2\})=\deg(\overline{\W})+\{0,2\}\\
			&=(\deg(\W(s))-1)+\{0,2\}=\deg(\W(s))+\{-1,1\}\,.
		\end{align*}
		
		\noindent \textbf{Subsubcase(B3b).}
		 Suppose that only one of the four regions produced from $P$ in
		   $\W(s')$ is a square  (that is only the one that comes from a bubble of $\W(s)$).
		 In this case, the polygon $P$ must have  at least  eight sides as illustrated in the figure below.
		 We again declare $\overline{\W}$ to be the simplified web without the factor $[2]$. 
		\[
		\W(s)=
		\begin{tikzpicture}[scale=.4]
			\draw[mid>] (0,-1) to (0,0);
			\draw (0,0) to (.25,.25);
			\draw[dotted] (.25,.25) to (.75,.75);
			\draw (.75,.75) to (1,1);
			\draw[mid>] (1,1) to (1,3);
			\draw[mid<] (0,0) to (-1,1);
			\draw[mid>] (-1,1) .. controls (-1.5,2) .. (-1,3);
			\draw[mid>] (-1,1) .. controls (-.5,2) .. (-1,3);
			\draw[->] (1,1) to (1.5,.5);
			\draw[mid<] (1,3) to (1.5,3.5);
			\draw[mid<] (0,4) to (0,5);
			\draw[] (-1,3) to (-.75, 3.75);
			\draw[] (-.75, 3.75) to (0,4);
			\draw[] (.75, 3.75) to (0,4);
			\draw[] (.75,3.75) to (1,3);
			\draw[->] (-.75, 3.75) to (-1.25, 4.5);
			\draw[->] (.75, 3.75) to (1.25, 4.5);
		\end{tikzpicture}
		=
		[2]~
		\begin{tikzpicture}[scale=.4]
			\draw[mid>] (0,-1) to (0,0);
			\draw (0,0) to (.25,.25);
			\draw[dotted] (.25,.25) to (.75,.75);
			\draw (.75,.75) to (1,1);
			\draw[mid>] (1,1) to (1,3);
			\draw (0,0) to (-1,1);
			\draw[mid<] (-1,1) ..controls (-1.25,2).. (-1,3);
			\draw[->] (1,1) to (1.5,.5);
			\draw[mid<] (1,3) to (1.5,3.5);
			\draw[mid<] (0,4) to (0,5);
			\draw[] (-1,3) to (-.75, 3.75);
			\draw[] (-.75, 3.75) to (0,4);
			\draw[] (.75, 3.75) to (0,4);
			\draw[] (.75,3.75) to (1,3);
			\draw[->] (-.75, 3.75) to (-1.25, 4.5);
			\draw[->] (.75, 3.75) to (1.25, 4.5);
		\end{tikzpicture}
		=
		[2]\overline{\W}
		\]
		We note that $\deg(\W(s))=\deg(\overline{\W})+1$. 
		
		After resolving the square in $\W(s')$,  that resulted from the bubble of $\W(s)$ illustrated above,
		we obtain two webs  $\overline{\W}$ and $\overline{\W'}$, illustrated  left to right in the middle part of the equalities below.
		Given our earlier assumptions and Lemma \ref{bigonsquare}, the $OW$-move must create a square, say $T$, on $\overline{\W}'$
		as shown below. Resolving $T$ we write $\overline{\W}'=\overline{\W}_1+\overline{\W}_2$, as shown below.
				
		\[
		\W(s')=
		\begin{tikzpicture}[scale=.5]
			\draw[mid>] (0,-1) to (0,0);
			\draw (0,0) to (.25,.25);
			\draw[dotted] (.25,.25) to (.75,.75);
			\draw (.75,.75) to (1,1);
			\draw[mid<] (0,0) to (-1,1);
			\draw[mid>] (-1,1) .. controls (-1.5,2) .. (-1,3);
			\draw[->] (1,1) to (1.5,.5);
			\draw[mid<] (1,3) to (1.5,3.5);
			\draw[mid<] (0,4) to (0,5);
			\draw[mid>] (-1,1) to (0,1.5);
			\draw[mid>] (1,1) to (0,1.5);
			\draw[mid<] (0,1.5) to (0,2.5);
			\draw[mid>] (0,2.5) to (-1,3);
			\draw[mid>] (0,2.5) to (1,3);
			\draw[] (-1,3) to (-.75, 3.75);
			\draw[] (-.75, 3.75) to (0,4);
			\draw[] (.75, 3.75) to (0,4);
			\draw[] (.75,3.75) to (1,3);
			\draw[->] (-.75, 3.75) to (-1.25, 4.5);
			\draw[->] (.75, 3.75) to (1.25, 4.5);
		\end{tikzpicture}
		=~
		\begin{tikzpicture}[scale=.5]
			\draw[mid>] (0,-1) to (0,0);
			\draw (0,0) to (.25,.25);
			\draw[dotted] (.25,.25) to (.75,.75);
			\draw (.75,.75) to (1,1);
			\draw[mid>] (1,1) to (1,3);
			\draw[-<] (0,0) to (-1,1);
			\draw (-1,1) ..controls (-1.25,2).. (-1,3);
			\draw[->] (1,1) to (1.5,.5);
			\draw[mid<] (1,3) to (1.5,3.5);
			\draw[mid<] (0,4) to (0,5);
			\draw[<-] (-1,3) to (-.75, 3.75);
			\draw[] (-.75, 3.75) to (0,4);
			\draw[] (.75, 3.75) to (0,4);
			\draw[] (.75,3.75) to (1,3);
			\draw[->] (-.75, 3.75) to (-1.25, 4.5);
			\draw[->] (.75, 3.75) to (1.25, 4.5);
		\end{tikzpicture}
		+
		\begin{tikzpicture}[scale=.5]
			\draw[mid>] (0,-1) to (0,0);
			\draw (0,0) to (.25,.25);
			\draw[dotted] (.25,.25) to (.75,.75);
			\draw (.75,.75) to (1,1);
			\draw[->] (1,1) to (1.5,.5);
			\draw[mid<] (1,3) to (1.5,3.5);
			\draw[mid<] (0,4) to (0,5);
			\draw[<-] (-1,3) to (-.75, 3.75);
			\draw[] (-.75, 3.75) to (0,4);
			\draw[] (.75, 3.75) to (0,4);
			\draw[] (.75,3.75) to (1,3);
			\draw[->] (-.75, 3.75) to (-1.25, 4.5);
			\draw[->] (.75, 3.75) to (1.25, 4.5);
			\draw (1,3) .. controls (0,2).. (-1, 3);
			\draw[mid<] (0,0) arc(180:90:1);
		\end{tikzpicture}
		=
		\overline{\W}+
		\underbrace{\begin{tikzpicture}[scale=.5]
				\draw[mid>] (0,-1) to (0,0);
				\draw (0,0) to (.25,.25);
				\draw[dotted] (.25,.25) to (.75,.75);
				\draw (.75,.75) to (1,1);
				\draw[->] (1,1) to (1.5,.5);
				\draw[mid>] (1.5,3.5) .. controls (0,3).. (-1.25, 4.5) node[midway,below] (B) {} node[midway,above] (B1) {};
				\draw[mid>] (0,5) .. controls (0,4) and (.75, 3.75) .. (1.25, 4.5) node[midway,above] (C) {};
				\draw[mid<] (0,0) arc(180:90:1) node[midway,below] (A) {};
				\draw[dashed,bend left=20] (A) to (B1);
				\draw[dashed,bend right=20] (B) to (C);
		\end{tikzpicture}}_{\overline{\W}_1}
		~+~
		\underbrace{\begin{tikzpicture}[scale=.5]
				\draw[mid>] (0,-1) to (0,0);
				\draw (0,0) to (.25,.25);
				\draw[dotted] (.25,.25) to (.75,.75);
				\draw (.75,.75) to (1,1);
				\draw[->] (1,1) to (1.5,.5);
				\draw[mid>] (1.5,3.5) .. controls (1,3.25) and (.75, 4.25).. (1.25, 4.5) node[midway,above] (B) {};
				\draw[mid>] (0,5) .. controls (0, 4.5) and (-1,3.5) .. (-1.25, 4.5) node[midway,above] (C) {};
				\draw[mid<] (0,0) arc(180:90:1) node[pos=.3,below] (A) {}node[pos=.8,below] (A1) {};
				\draw[dashed, bend left=15] (A1) to (B);
				\draw[dashed, bend left=15] (A) to (C);
		\end{tikzpicture}}_{\overline{\W}_2}
		\]
		Observe that each of   $\overline{\W}_1$ and $\overline{\W}_2$ transforms into $\overline{\W}$ under two applications of the $OW$-move. 
		Each move will be performed in a disc neighborhood of one the four arcs indicated by dashed lines in the right hand side of above equation.

		Once again, since  $\overline{\W}$  has fewer than $n$ edges, by the induction hypothesis,  we have
		 $$\deg(\overline{\W}_1)=\deg(\overline{\W})+\{-2,0,2\}\ \ \  {\rm and }\ \ \ \deg(\overline{\W}_2)=\deg(\overline{\W})+\{-2,0,2\}.$$
		By the positivity of the coefficients of crossingless webs, we now have		
		\begin{align*}
			&\deg(\W(s'))=\max(\deg(\overline{\W}),\deg(\overline{\W}_1),\deg(\overline{\W}_2))\\
		&=\max(\deg(\overline{\W}),\deg(\overline{\W})+\{-2,0,2\})=\deg(\overline{\W})+\{0,2\}\,.
		\end{align*}		
	which gives $\deg(\W(s'))=\deg(\W(s))+\{-1,1\}$.\newline

		\noindent \textbf{Subcase(B4).} Suppose that $P$
		$m$-gon with $m>4$, no edges of which are edges to bubbles of $\W(s)$.
		Then $W(s)$ 
		has at least one square with one of its sides being a component of $E\cap \W(s)$. Then,
		
		\[
		\W(s)=\begin{tikzpicture}[scale=.4]
			\draw[mid>] (0,-1) to (0,0);
			\draw (0,0) to (.25,.25);
			\draw[dotted] (.25,.25) to (.75,.75);
			\draw (.75,.75) to (1,1);
			\draw[mid>] (1,1) to (1,3);
			\draw[] (1,3) to (0,4);
			\draw[] (0,4) to (-1,3);
			\draw[mid<] (0,0) to (-1,1);
			\draw[mid>] (-1,1) to (-1,3);
			\draw[mid>] (-1,1) to (-3,1);
			\draw[mid<] (-3,1) to (-3,3);
			\draw[mid>] (-3,3) to (-1,3);
			\draw[mid>] (-3.5,.5) to (-3,1);
			\draw[<-] (-3.5,3.5) to (-3,3);
			\draw[->] (1,1) to (1.5,.5);
			\draw[mid<] (1,3) to (1.5,3.5);
			\draw[->] (0,4) to (0,5);
		\end{tikzpicture}
		=
		\underbrace{\begin{tikzpicture}[scale=.4]
				\draw[mid>] (0,-1) to (0,0);
				\draw (0,0) to (.25,.25);
				\draw[dotted] (.25,.25) to (.75,.75);
				\draw (.75,.75) to (1,1);
				\draw[mid>] (1,1) to (1,3);
				\draw[] (1,3) to (0,4);
				\draw[->] (0,4) to (-1,3);
				\draw[-<] (0,0) to (-1,1);
 				\draw (-1,1) to[out=135,in=225] node[midway,right] (B) {} (-1,3) ;
				\draw (-2.5,1) to[out=45,in=-45] node[midway,left] (A) {} (-2.5,3);
				\draw[->] (-3,.5) to (-2.5,1);
				\draw[mid<] (-3,3.5) to (-2.5,3);
				\draw[->] (1,1) to (1.5,.5);
				\draw[mid<] (1,3) to (1.5,3.5);
				\draw[->] (0,4) to (0,5);
				\draw[dashed] (A) to (B);
		\end{tikzpicture}}_{\W_1}
		~+~
		\underbrace{\begin{tikzpicture}[scale=.4]
				\draw[mid>] (0,-1) to (0,0);
				\draw (0,0) to (.25,.25);
				\draw[dotted] (.25,.25) to (.75,.75);
				\draw (.75,.75) to (1,1);
				\draw[mid>] (1,1) to (1,3);
				\draw[] (1,3) to (0,4);
				\draw[->] (0,4) to (-1,3);
				\draw[-<] (0,0) to (-1,1);
				\draw (-1,1) to[out=135,in=45] (-2.5,1);
				\draw  (-1,3) to[out=225,in=-45] (-2.5,3);
				\draw[->] (-3,.5) to (-2.5,1);
				\draw[<-] (-3,3.5) to (-2.5,3);
				\draw[->] (1,1) to (1.5,.5);
				\draw[mid<] (1,3) to (1.5,3.5);
				\draw[->] (0,4) to (0,5);
		\end{tikzpicture}}_{\W_2}
		\]
		and similarly in  $E\cap \W'(s)$ we have,
		
	\begin{align*}
			\W(s')&=\begin{tikzpicture}[scale=.6]
				\draw[mid>] (0,-1) to (0,0);
				\draw (0,0) to (.25,.25);
				\draw[dotted] (.25,.25) to (.75,.75);
				\draw (.75,.75) to (1,1);
				\draw[mid<] (1,3) to (0,4);
				\draw[mid>] (0,4) to (-1,3);
				\draw[mid<] (0,0) to (-1,1);
				\draw[mid>] (-1,1) to (-2,1);
				\draw[mid<] (-2,1) to (-2,3);
				\draw[mid>] (-2,3) to (-1,3);
				\draw[mid>] (-1,1) to (0,1.5);
				\draw[mid>] (1,1) to (0,1.5);
				\draw[mid<] (0,1.5) to (0,2.5);
				\draw[mid>] (0,2.5) to (-1,3);
				\draw[mid>] (0,2.5) to (1,3);
				\draw[mid>] (-2.5,.5) to (-2,1);
				\draw[<-] (-2.5,3.5) to (-2,3);
				\draw[->] (1,1) to (1.5,.5);
				\draw[mid<] (1,3) to (1.5,3.5);
				\draw[->] (0,4) to (0,5);
			\end{tikzpicture}
			=
			\underbrace{\begin{tikzpicture}[scale=.6]
					\draw[mid>] (0,-1) to (0,0);
					\draw (0,0) to (.25,.25);
					\draw[dotted] (.25,.25) to (.75,.75);
					\draw (.75,.75) to (1,1);
					\draw[mid<] (0,0) to (-1,1);
					\draw[mid>] (-1,1) to (-2,1);
					\draw[mid<] (-2,1) to (-2,3);
					\draw[->] (-2,3) to (-1,3);
					\draw[mid>] (-1,1) to (0,1.5);
					\draw[mid>] (1,1) to (0,1.5);
					\draw[-<] (0,1.5) to (0,2.5);
					\draw (-1,3) to[out=0,in=-90] (0,4);
					\draw (1,3) to[out=225,in=90] (0,2.5);
					\draw[mid>] (-2.5,.5) to (-2,1);
					\draw[<-] (-2.5,3.5) to (-2,3);
					\draw[->] (1,1) to (1.5,.5);
					\draw[-<] (1,3) to (1.5,3.5);
					\draw[->] (0,4) to (0,5);
			\end{tikzpicture}}_{\W'_1}
			+
			\begin{tikzpicture}[scale=.5]
				\draw[mid>] (0,-1) to (0,0);
				\draw (0,0) to (.25,.25);
				\draw[dotted] (.25,.25) to (.75,.75);
				\draw (.75,.75) to (1,1);
				\draw[mid<] (0,0) to (-1,1);
				\draw[mid>] (-1,1) to (-2,1);
				\draw[mid<] (-2,1) to (-2,3);
				\draw[->] (-2,3) to (-1,3);
				\draw[mid>] (-1,1) to (0,1.5);
				\draw[mid>] (1,1) to (0,1.5);
				\draw[-<] (0,1.5) to (0,2.5);
				\draw (-1,3) to[out=0,in=90] (0,2.5);
				\draw (1,3) to[out=225,in=-90] (0,4);
				\draw[mid>] (-2.5,.5) to (-2,1);
				\draw[<-] (-2.5,3.5) to (-2,3);
				\draw[->] (1,1) to (1.5,.5);
				\draw[-<] (1,3) to (1.5,3.5);
				\draw[->] (0,4) to (0,5);
			\end{tikzpicture}
			\\[1em]&=
			\W_1'+
			\underbrace{\begin{tikzpicture}[scale=.5]
					\draw[mid>] (0,-1) to (0,0);
					\draw (0,0) to (.25,.25);
					\draw[dotted] (.25,.25) to (.75,.75);
					\draw (.75,.75) to (1,1);
					\draw[mid>] (-1,1) to[out=45, in=-45] (-1,3);
					\draw[->] (1,1) to (1.5,.5);
					\draw[mid<] (0,0) arc(180:90:1) node[midway,below] (B) {};
					\draw[mid>] (1.25,3.5) .. controls (.75,3) and (-.25,3.5) .. (-.25,4) node[midway,above] (A) {};
					\draw[dashed,bend right=5] (A) to (B);
			\end{tikzpicture}}_{\overline{\W}'_1}
			~+~
			\underbrace{\begin{tikzpicture}[scale=.5]
					\draw[mid>] (0,-1) to (0,0);
					\draw (0,0) to (.25,.25);
					\draw[dotted] (.25,.25) to (.75,.75);
					\draw (.75,.75) to (1,1);
					%
					\draw[<-]  (-.5,3) to[out=-45, in=90] node[pos=.3,below] (B) {} (1,1) ;
					\draw[->] (1,1) to (1.5,.5);
					\draw[mid>] (1.5,3.5) .. controls (1,3) and (0,3.5) .. (0,4) node[midway,above] (A) {};;
					\draw[mid<] (0,0) to (-1,.5);
					\draw[dashed] (B) to (A);
			\end{tikzpicture}}_{\overline{\W}'_2}\,.
		\end{align*}
		
We observe the following relations:
		\begin{align*}
			\W_1\xmapsto{\mbox{$OW$-move}} \W_1'\,,
			&&
			\overline{\W}'_1 \xmapsto{\mbox{$OW$-move}} \W_1\,,
			&&
			\overline{\W}'_2 \xmapsto{\mbox{$OW$-move}} \W_2\,.
		\end{align*}
		By the inductive hypothesis, these imply 
		\begin{align*}
			\deg(\W_1)=\deg(\W_1')+\{-1,1\}\,,
			&&
			\deg(\overline{\W}'_1)=\deg(\W_1)+\{-1,1\}\,,
			&&
			\deg(\overline{\W}'_2)=\deg(\W_2)+\{-1,1\}\,.
		\end{align*}
		Positivity of the leading coefficients implies
		\begin{align*}
			\deg(\W(s'))&=\max(\deg(\W_1'), \deg(\overline{\W}'_1), \deg(\overline{\W}'_2))
			\\&=\max(\deg(\W_1)+\{-1,1\},\deg(\W_2)+\{-1,1\})
			\\
			&
			=\max(\deg(\W_1),\deg(\W_2))+\{-1,1\} 
			\\&=
			\deg(\W(s))+\{-1,1\} \,.\qedhere
	\end{align*}

	\end{proof}

	\medskip
	\smallskip
	
	We use the notation for the degree function 
	\begin{align*}
		d(s)=\deg(Y(s))=-2(\a_+(s)-\a_-(s))-3(\b_+(s)-\b_-(s))+\deg(\lla \W(s)\rra)\,.
	\end{align*} 
	For positive diagrams this reduces to 
	\begin{align*}
		d(s)=\deg(Y(s))=-2\a(s)-3\b(s)+\deg(\lla \W(s)\rra)\,.
	\end{align*} 
	
		\begin{prop}\label{prop.dec}
		Let $D$ be a positive diagram. Suppose $s$ and $s'$ are two states of $D$ such that $s'$ is obtained from $s$ by an $OW$-move. Then
		\begin{align*}
			d(s')=d(s)&& \mbox{or} &&  d(s')=d(s)-2\,.
		\end{align*}
		In particular, $OW$-moves on $s$ do not increase the degree of $Y(s)$. 
	\end{prop}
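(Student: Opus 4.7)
The plan is to reduce the statement to a direct combination of two observations: the exact change in the ``phase degree'' of $Y(s)$ under an $OW$-move, and the $\pm 1$ change in $\deg(\lla \W(s)\rra)$ supplied by Lemma \ref{lem.webdec}. Since $D$ is positive, every crossing is positive, so $\a(s) = \a_+(s)$ and $\b(s) = \b_+(s)$, and the simplified formula
\[
d(s) = -2\a(s) - 3\b(s) + \deg(\lla \W(s)\rra)
\]
applies to all states under consideration.

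First I would record the effect of the $OW$-move on the statistics $\a$ and $\b$. By definition, $s'$ is obtained from $s$ by changing a single $O$-resolution to a $W$-resolution, so $\a(s') = \a(s) - 1$ and $\b(s') = \b(s) + 1$. Consequently the first two terms of $d$ change by
\[
-2\a(s') - 3\b(s') - \bigl(-2\a(s) - 3\b(s)\bigr) = -2(-1) - 3(+1) = -1.
\]

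Next I invoke Lemma \ref{lem.webdec}, which applies to any link diagram (not just positive ones) and states that
\[
\deg(\lla \W(s')\rra) = \deg(\lla \W(s)\rra) + \{-1, 1\}.
\]
Adding this to the $-1$ contribution above gives
\[
d(s') - d(s) = -1 + \{-1, 1\} = \{-2, 0\},
\]
which is exactly the claim: either $d(s') = d(s)$ or $d(s') = d(s) - 2$, and in particular $d(s') \leq d(s)$.

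There is no real obstacle here since the nontrivial work has been carried out in Lemma \ref{lem.webdec}; the proof is essentially bookkeeping, and the only point worth emphasizing is that positivity of $D$ is what makes the phase contribution $-2(\a(s')-\a(s)) - 3(\b(s')-\b(s))$ evaluate to the single value $-1$ rather than to $\pm 1$ (as would occur for negative crossings), thereby biasing the possible outcomes downward and producing the asymmetry $\{-2,0\}$ instead of $\{-2,0,2\}$.
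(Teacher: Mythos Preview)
Your proof is correct and follows essentially the same approach as the paper: compute that the phase degree drops by exactly $1$ under the $OW$-move, then combine with the $\pm 1$ change in $\deg(\lla \W(s)\rra)$ from Lemma~\ref{lem.webdec}. The paper phrases the phase calculation as $\phi(s')=-q^{-1}\phi(s)$, but this is the same arithmetic you carry out with $\a$ and $\b$.
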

	\begin{proof}
		The phases of states related by an $OW$-move differ by a factor of $-q^{-1}$:
		\[
		\phi(s')=(-1)^{\b(s)+1}q^{-2(\a(s)-1)-3(\b(s)+1)}=-q^{-1}\phi(s)\,.
		\]
		Therefore 
		\begin{align*}
			d(s')=\deg(\lla \W(s')\rra)+\deg(\phi(s))-1 && \mbox{and}&&
			d(s)=\deg(\lla \W(s)\rra)+\deg(\phi(s))\,.
		\end{align*}
		Now from Lemma \ref{lem.webdec} either $\deg(\lla \W(s')\rra )=\deg(\lla \W(s)\rra)+1$ or $\deg(\lla \W(s')\rra )=\deg(\lla \W(s)\rra )-1$. The claim readily follows.	
	\end{proof}
	
	\smallskip
	
	\begin{rem}
		Under single changes of resolutions, the degree of $Y(s)$ is preserved modulo 2. Also note that each reduced expression in Equation \eqref{eq.webrels} contributes a polynomial in $q$ of constant degree modulo 2. Since $d(O)=2(\scs-\xings)$ for positive diagrams, we have $\lla D\rra\in q^{2(\scs-\xings)}\bZ[q^{-2}]$. In particular, the $\Atwo$ invariant of a positive link is valued in $\bZ[q^{2},q^{-2}]$.
	\end{rem}
	\medskip
	
	
	\section{The two leading terms of the $\Atwo$ invariant for positive links }\label{sec:leadtwo}
	
	\subsection{The $O$ and $W$ state graphs} We consider graphs associated to a link diagram $D$ for various types of crossing resolutions. We have already introduced the oriented and web resolutions of crossings in Figure \ref{fig.res}. 
	
	To each state $s$ of a link diagram $D$, there is an associated \emph{state diagram} $D_s$ constructed as follows. First consider the all $O$-resolution of $D$ which consists of non-intersecting circles on  $S^2$. For each crossing of $D$ which is assigned a $W$-resolution in $s$, add an edge between two curves in the position of the crossing. We also have the \emph{state graph} $\G_s\coloneqq\G_s(D)$ associated to $s$, obtained from $D_s$ by collapsing each circle to a point. 
	By definition, for each state $s$, the graph $\G_s$ is a spanning subgraph of $\G_W$, i.e.\! it contains all the vertices of  $\G_W$.
	We denote the number of vertices in $\G_W$ (Seifert circles in $D$) by $\scs$ and the number of edges in $\G_W$ (crossings in $D$) by $\edges$.

		\begin{rem}\label{rem.same}
		Note that the $O$-resolution is orientation preserving, and the state diagram $D_O$ coincides with the set of Seifert circles  of $D$. In Seifert's algorithm for constructing a Seifert surface, twisted bands join two circles in the positions of each crossing. The core of each band corresponds an edge in the diagram $D_s$ and $D_W$ is exactly the Seifert diagram of $D$. In this way $\G_W$ the \emph{$W$-graph} is a spine of the Seifert surface for $D$. 
		The graph associated with the all-$O$ state $\G_O$, which may also call the \emph{$O$-graph}, has no edges.
	\end{rem}
	
	The \emph{reduced state graph $\G_s'\coloneqq\G_s'(D)$} for a state $s$ is obtained from $\G_s$ by removing all multiplicity edges. We call an edge of $\G'_W$ a \emph{reduced edge} and write $\edges'$ for the number of reduced edges in the graph.
	
Our main result in this section is the following.
	
	\begin{thm}\label{thm.second}
		Let $L$ be a link with a positive diagram $D$. The first two leading terms of the $\Atwo$ invariant are expressed as 
		\[\lla L\rra = q^{2(\scs-\xings)} +(\scs-\edges')q^{2(\scs-\xings-1)}+\textup{lower degree terms}\,,\]
		where $\scs, \xings$ and $\edges'$ are defined for $D$ as above.
	\end{thm}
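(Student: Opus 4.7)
The plan is to compute the two leading coefficients of $\lla L\rra$ via the state sum $\lla D\rra=\sum_s Y(s)$, identifying which states contribute at the degrees $2(\scs-\xings)$ and $2(\scs-\xings-1)$.

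I would first establish that for a positive diagram every crossing lies between two \emph{distinct} Seifert circles, so $\G_W$ has no loops. This follows from a Jordan-curve consistency argument: at a positive crossing both strands are locally oriented upward, so if both strands lay on a single Seifert circle $C$ (a simple closed curve after $O$-smoothing of all crossings), then the interior of $C$ would lie on a fixed side of its direction of travel, forcing the thin connected ``between'' strip separating the two parallel strands to be simultaneously labelled interior (from one strand) and exterior (from the other), a contradiction. Combined with Proposition~\ref{prop.dec} and Case~(A) of the proof of Lemma~\ref{lem.webdec}, any $OW$-move starting from $O$ (which necessarily joins two previously disjoint Seifert circles into one theta-graph component) strictly drops $d$ by $2$. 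Hence $O$ is the unique state with $d(s)=2(\scs-\xings)$, and the leading term of $Y(O)=q^{-2\xings}[3]^\scs$ immediately gives $\gamma_1=1$.

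Next I would identify all states contributing to degree $2(\scs-\xings-1)$. The state $O$ itself contributes the subleading coefficient of $q^{-2\xings}(q^2+1+q^{-2})^\scs$, which is $\scs$. The remaining contributing states are exactly those whose $W$-resolved crossings all lie between a single pair $(C_i,C_j)$ of distinct Seifert circles: starting from $O$, the first degree-dropping $OW$-move pins down such a pair, and further $OW$-moves at crossings parallel to it preserve $d$ (the relevant subcase of Case~(B) of Lemma~\ref{lem.webdec}, since the local modification happens within a single connected component already produced), while any move at a crossing connecting a different pair would drop $d$ further. For each reduced edge of $\G'_W$ of multiplicity $m$ and each non-empty size-$k$ subset of these $m$ parallel crossings, the web $\W(s)$ consists of two Seifert circles joined by $k$ parallel bonds together with the remaining $\scs-2$ free circles; by inductively applying the square and bubble relations this evaluates to $[2]^k[3]^{\scs-1}$, so $Y(s)=(-1)^k q^{-2\xings-k}[2]^k[3]^{\scs-1}$ contributes $(-1)^k$ at the desired degree.

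Summing for each reduced edge gives $\sum_{k=1}^{m}\binom{m}{k}(-1)^k=-1$, and summing over all $\edges'$ reduced edges gives $-\edges'$; adding the $\scs$ from $Y(O)$ yields $\gamma_2=\scs-\edges'$. The most delicate technical step I expect is verifying $\lla W_k\rra=[2]^k[3]$ for two Seifert circles joined by $k$ parallel bonds, where $k\leq 2$ follows directly by bubble reduction of the two bigons inside the circles, while $k\geq 3$ requires an induction via the square relation (applied to one of the squares between consecutive bonds) to reduce to the $k-1$ case.
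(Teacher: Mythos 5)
Your proposal is correct and takes essentially the same route as the paper: the all-$O$ state alone reaches degree $2(\scs-\xings)$, the only other states reaching degree $2(\scs-\xings-1)$ are those whose $W$-resolutions lie over a single pair of Seifert circles (the paper's Lemma \ref{lem.oneW}), and the alternating binomial sum contributes $-1$ per reduced edge, giving $\g_2=\scs-\edges'$. The extra details you supply --- loop-freeness of the Seifert graph and the inductive evaluation of two circles joined by $k$ parallel bonds to $[2]^k[3]$ --- are correct and merely make explicit what the paper leaves implicit (the latter computation reappears in the paper as Lemma \ref{lem.CapSquare}).
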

	
Note that Theorem \ref{thm.second} implies that the quantities
$\scs-\xings$ and $\scs-\edges'$ are invariants of $L$ (i.e.\! independent of the choice of the positive diagram). 
		
		\begin{rem}\label{rem.deccond} Given the language of state graphs, we formulate another implication of Proposition \ref{prop.dec} and its proof that  will also be useful in computing coefficients of the $\Atwo$ invariant.
		In \hyperlink{lem.webdec.A}{{Case(A)}} of the proof of Lemma \ref{lem.webdec}, the clearing region is bounded by two polygons and in this case we showed $\deg(\lla \W(s')\rra)=\deg(\lla \W(s)\rra)-1$. Therefore $d(s')=d(s)-2$. Moreover, this $OW$-move decreases the number of components of both $\W(s)$ and the state graph $\G_s$. Consequently, a state $s$ corresponding to a (reduced) graph with $\scs-k$ components has degree at most $2(\scs-\xings-k)$.
	\end{rem}

	 \subsection{Proof of Theorem \ref{thm.second}}

	Recall that $Y(O)=q^{-2\xings}[3]^{\scs}$ and $d(O)=2(\scs-\xings)$.
	 Every state of $D$ is obtained from $O$ by replacing a number of $O$-resolutions by $W$-resolutions.
	Thus $s$ is obtained from $O$ by performing a number of $OW$-moves. 
	Now if $s_1$ is a state on a positive diagram $D$ that is obtained from $O$ by an $OW$-move, then in $s_1$ two Seifert circles of $O$ are merged as shown in Figure \ref{fig.OW}.
	By Remark \ref{rem.deccond}, $d(s_1)=d(O)-2$. If $s_2$ is obtained by performing additional $OW$-moves on $s_1$, then
	$d(s_2)\leq d(s_1)<d(O)$. 
	  It follows that $\deg(\lla D\rra)=2(\scs-\xings)$ for positive $D$ and the all-$O$ resolution is the unique state contributing to the highest coefficient of $\lla D\rra$. 
	  Thus, the leading coefficient of  $\lla D\rra$ is one.

	 To compute the second coefficient we need the following lemma.

	\begin{lem}\label{lem.oneW}
		Let $D$ be a positive diagram and let  $s\neq O$ be a state
		for which $d(s)=2(\scs-\xings-1)$. Then, all the $W$-resolutions in $s$ 
		occur between a single pair of Seifert circles, i.e.\! states $s$ where $\G_s'$ has a single edge. For such states
		\[
		Y(s)=(-q)^{-2\xings-\b(s)}[3]^{\scs-1}[2]^{\b(s)}\,.\]
	\end{lem}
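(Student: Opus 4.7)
My plan is to first use Remark~\ref{rem.deccond} to pin down the combinatorial structure of $\G_s'$, and then evaluate $Y(s)$ by a direct web-simplification argument. For the structural claim, Remark~\ref{rem.deccond} asserts that a state whose reduced graph has $\scs-k$ components satisfies $d(s) \leq 2(\scs-\xings-k)$. Since $d(s) = 2(\scs-\xings-1)$ and $s \neq O$, this forces $\G_s$ to have exactly $\scs-1$ components: at most $\scs-1$ by the bound, and strictly fewer than $\scs$ because with $k=0$ the bound would give $d(s)\leq 2(\scs-\xings) > 2(\scs-\xings-1)$ in a way that cannot be sharp without some component-reducing $OW$-move along the way from $O$ to $s$. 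A graph on $\scs$ vertices with $\scs-1$ components and at least one non-loop edge must consist of parallel edges between a single pair of distinct vertices, with all other vertices isolated, so $\G_s'$ has a single edge and the $W$-resolutions in $s$ all lie on crossings between a common pair of Seifert circles $C_1, C_2$.

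For the formula I would compute the phase and the web evaluation separately. Using $\a(s) = \xings - \b(s)$ for a positive diagram, the phase simplifies: $\phi(s) = (-1)^{\b(s)} q^{-2\xings-\b(s)} = (-q)^{-2\xings-\b(s)}$. For the web, the $\scs-2$ Seifert circles not incident to $C_1$ or $C_2$ are untouched and survive as disjoint loops in $\W(s)$, contributing a factor of $[3]^{\scs-2}$ by the circle relation of~\eqref{eq.webrels}. The remaining connected component $M$ is $C_1 \cup C_2$ joined by $k := \b(s)$ $W$-gadgets, arranged cyclically in the planar region between $C_1$ and $C_2$. The key claim, which I prove by induction on $k$, is $\lla M \rra = [2]^k[3]$. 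For the base case $k = 1$, $M$ is a theta graph: two trivalent vertices joined by three arcs, namely the internal gadget edge together with the arcs of $C_1$ and $C_2$ closing up around the single crossing. Applying the bubble relation to any two of the three arcs and then the circle relation to the resulting loop gives $\lla M \rra = [2][3]$. For the inductive step $k \geq 2$, the two arcs of $C_1$ and $C_2$ joining consecutive gadgets $G_i$ and $G_{i+1}$ form a bubble between the source $T_i$ of $G_i$ and the sink $B_{i+1}$ of $G_{i+1}$. Applying the bubble relation extracts a factor of $[2]$ and fuses $G_i, G_{i+1}$ into a single gadget (the new internal edge replaces the collapsed bubble together with the original internal edges of $G_i$ and $G_{i+1}$). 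This reduces $M$ to a web with $k-1$ cyclically arranged gadgets, and the inductive hypothesis gives $\lla M \rra = [2] \cdot [2]^{k-1}[3] = [2]^k[3]$. Combining with the isolated circles, $\lla \W(s) \rra = [3]^{\scs-1}[2]^{\b(s)}$, and multiplying by $\phi(s)$ yields the claimed expression.

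The main delicacy is geometric: one must verify that in $\W(s)$ the two arcs of $C_1$ and $C_2$ between consecutive gadgets actually bound an empty bigon so that the bubble relation is applicable. This holds because every other crossing of $D$ is in $O$-resolution, so the remaining $\scs - 2$ Seifert circles appear as embedded disjoint loops in $\W(s)$ that can, up to ambient isotopy on $S^2$, be moved out of the planar region hosting the $C_1$-$C_2$ gadgets. A secondary check is that the cyclic arrangement of gadgets is preserved under each fusion step (the combined gadget is cyclically adjacent to $G_{i-1}$ and $G_{i+2}$), which is what makes the induction run cleanly.
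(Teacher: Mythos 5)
Your argument is correct and follows the paper's own proof of this lemma: the structural claim is obtained from Remark~\ref{rem.deccond} exactly as in the paper (reduced edges between more than one pair of vertices force the state graph to have at most $\scs-2$ components, hence $d(s)\leq 2(\scs-\xings-2)$), and your bubble-collapse induction correctly supplies the evaluation $\lla \W(s)\rra=[3]^{\scs-1}[2]^{\b(s)}$ that the paper records only as ``a straightforward computation.'' The one wording slip is in the component count --- the degree bound forces \emph{at least} $\scs-1$ components, while $s\neq O$ (hence at least one non-loop edge in the bipartite Seifert graph) forces \emph{at most} $\scs-1$ --- but your conclusion and the rest of the argument, including the key geometric point that the coherent orientation of the two Seifert circles makes consecutive gadgets cobound bigons rather than squares, are sound.
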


\begin{proof}
		By assumption, $s$ has at least one $W$-resolution.  Therefore its state graph has at most $\scs-1$ components, and so, by Remark \ref{rem.deccond}, $d(s)\leq 2(\scs-\xings-1)$. Moreover, if $s'$ is another state with additional $W$-resolutions between a different pair of vertices, then this further decreases the number of components of the state graph. For such $s'$, $d(s')<d(s)$. Therefore, only states with a single edge in their reduced graph have degree $2(\scs-\xings-1)$. The evaluation of such a state is a straightforward computation.
	\end{proof}

	We now complete the proof of Theorem \ref{thm.second}.

			\begin{proof}Let $D=D(L)$ be a positive diagram of $L$. The states $s$ of $D$ which contribute to the leading terms come from the all-$O$ resolution of $D$ as well as those with a single reduced edge by Lemma \ref{lem.oneW}. As indicated above, 
		\[
		Y(O)=q^{-2\xings}[3]^\scs=q^{2(\scs-\xings)}+\scs q^{2(\scs-\xings-1)}+\textup{lower degree terms}\] and for states $s$ corresponding to a single reduced edge 
		\[Y(s)=(-1)^{\b(s)}q^{2(\scs-\xings-1)}+\textup{lower degree terms}\,.\] 
		It remains to compute the sum over these states weighted by $Y(s)$.
		
		Enumerate the edges of $\G_W'$ for $i=1,\dots, e'$ and let $k_i$ be the multiplicity of the $i$-th reduced edge relative to $\G_W$. Thus we obtain the coefficient
		\[
		\sum_{i=1}^{e'}\sum_{j=1}^{k_i} {k_i \choose j}(-1)^{j}=\sum_{i=1}^{e'}-(1-\delta_{k_i,0})=-e'\,. \qedhere
		\]
	\end{proof}
	\medskip

	\section{A characterization of positive fibered links}\label{sec:char} 
	
	In this section we provide a characterization of positive fibered  links in terms  of the second coefficient of  the quantum $\Atwo$ invariant.
	Alternatively, our characterization can be stated in terms of the Seifert graph $\G_W=\G_W(D)$ of any positive diagram of the link.
	
	For a positive link $L$ and any positive diagram $D=D(L)$, let  $\g_i$  denote the $i$-th coefficient of the quantum $\Atwo$ invariant, specifically the coefficient of $q^{2(\scs-\xings-i+1)}$. From Theorem \ref{thm.second}, 
	we have $\g_1 =1$ and $ \g_2 = (\scs-\edges')$.

	\begin{thm}\label{thm.fiber} Suppose that $L$ admits a connected positive  diagram $D$ with reduced Seifert graph $\G_W'$.
	The following are equivalent:
	\begin{enumerate}[(1)]
			\item $S^3\setminus L$ fibers over $S^1$,
			\item $\G_W'$ is a tree,
			\item $\gamma_2=1$.
		\end{enumerate}
	\end{thm}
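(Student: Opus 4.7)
The plan is to decouple the theorem into two implications: the equivalence (2) $\Leftrightarrow$ (3), which is pure graph theory combined with Theorem \ref{thm.second}, and the equivalence (1) $\Leftrightarrow$ (2), which is a topological statement that I would deduce from the result of Futer--Kalfagianni--Purcell \cite{Gutsbook} cited in the introduction.

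For (2) $\Leftrightarrow$ (3), I would first note that since $D$ is a connected diagram, the Seifert graph $\G_W$ is connected: any two Seifert circles can be joined by a path of crossings in $D$, which lifts to a path in $\G_W$. Collapsing parallel edges preserves connectedness, so $\G_W'$ is connected. For a connected graph on $\scs$ vertices and $\edges'$ edges, the inequality $\edges' \geq \scs-1$ holds with equality if and only if the graph is a tree. Combined with the formula $\g_2=\scs-\edges'$ from Theorem \ref{thm.second}, this gives $\g_2\leq 1$ with equality precisely when $\G_W'$ is a tree, establishing (2) $\Leftrightarrow$ (3).

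For (1) $\Leftrightarrow$ (2), I would invoke a criterion from \cite{Gutsbook}. That reference characterizes fiberedness of the complement of a positive link in terms of a combinatorial property of the state graph associated to any positive diagram, and one can check that the relevant property in the $O$-state setting is precisely that $\G_W'$ is a tree. Alternatively, one can give a more classical route via Stallings' plumbing theorem: if $\G_W'$ is a tree, then the Seifert surface obtained from $D$ decomposes as an iterated Murasugi sum (plumbing) of positive Hopf bands---one for each reduced edge, with multiplicities---and hence is a fiber surface by Stallings/Gabai, so $S^3\setminus L$ fibers over $S^1$. Conversely, if $\G_W'$ contains a cycle, the Seifert surface for $D$ has a non-trivial first homology class supported on the cycle that obstructs fiberedness; since the Seifert surface for a connected positive diagram realizes $\chi(L)$, it would be the fiber surface were $L$ fibered, which forces $\G_W'$ to be a tree.

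The main obstacle is the implication (2) $\Rightarrow$ (1), that is, verifying that once $\G_W'$ is a tree the link is genuinely fibered. If I take the plumbing/Stallings route, I need to be careful that the plumbing structure read off from the Seifert graph indeed expresses the Seifert surface of $D$ as an iterated Murasugi sum of Hopf bands (this is classical for positive braid closures but requires justification in the positive-diagram generality), and that maximality of $\chi$ ensures the fiber candidate is the genuine fiber surface. The cleanest route is just to quote \cite{Gutsbook} and translate its state-graph hypothesis into the Seifert-graph language used here, which is what the introduction signals we are doing.
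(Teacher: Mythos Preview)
Your proposal is correct in outline and matches the paper's approach: the equivalence $(2)\Leftrightarrow(3)$ is exactly the connected-graph inequality $\edges'\geq \scs-1$ combined with Theorem~\ref{thm.second}, and $(1)\Leftrightarrow(2)$ is obtained by quoting the fibering criterion of \cite{Gutsbook}. The paper carries out the ``translation'' you allude to via Lemma~\ref{lem.known}: for a connected positive diagram the $O$-resolution agrees with the $B$-resolution, so $\G_W(D)=\G_B(\overline D)$, the Seifert surface equals $S_B(\overline D)$, and $\overline D$ is $B$-adequate; this is precisely what is needed to invoke \cite[Theorem~5.11]{Gutsbook}. For the direction $(1)\Rightarrow(2)$ the paper also uses, as you do, that the Seifert surface of a positive diagram realizes $\chi(L)$ (Cromwell) and hence must be \emph{the} fiber if $L$ is fibered.

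One caution about your alternative route: the sentence ``if $\G_W'$ contains a cycle, the Seifert surface has a non-trivial first homology class supported on the cycle that obstructs fiberedness'' is not a valid argument as written. Fiber surfaces generically have nonzero first Betti number, so a homology class in the surface is no obstruction; what actually goes wrong when $\G_W'$ has a cycle is more subtle (in the language of \cite{Gutsbook}, the guts of the state-surface complement become nontrivial). Your main route via \cite{Gutsbook} avoids this issue entirely, so you should drop the alternative converse argument.
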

	
Before we proceed with the proof we explain how Theorem \ref{thm.fiberi} is derived from Theorem \ref{thm.fiber}:
Suppose that $L$ is a positive non-split link. Any positive diagram of  $L$ must be non-split (i.e.\! connected). Hence,  $L$ is fibered if and only if 
$\g_2=1$ by Theorem \ref{thm.fiberi}.

	\medskip
	\smallskip

For the proof of Theorem	 \ref{thm.fiber}  we need to recall the $A$, $B$-resolutions of link diagrams appearing the definition of the classical Kauffman bracket,
which in turn is related to the Jones polynomial. In this setting, from a  crossing of an un-oriented connected link diagram $D$  we obtain the $A$-resolution and the $B$ resolution
as illustrated in Figure \ref{fig.resolve}. The A-resolution (resp.\! B-resolution) is defined by deleting the part of the diagram swept over by rotating the overcrossing clockwise (resp.\! counterclockwise).  Applying the $A$-resolution  (resp.\! $B$-resolution) to all the crossings of $D$ gives
 gives a collection of simple closed curves $v_A$ (resp.\! $v_B$) called state circles. The all-$A$ (resp.\! all-$B$) state graph of $D$, denoted by $\G_A\coloneqq \G_A(D)$ (resp.\! $\G_B\coloneqq \G_B(D)$)   has vertex 
 set $v_A$ (resp.\! $v_B$) and each edge connects points on state circles  
 where the crossing resolutions where performed. 
Associated to the all-$A$ and all-$B$ resolutions of $D$ one has the state surfaces $S_A(D)$ and $S_B(D)$: Each state circle in $v_A$ (resp.\! $v_B$) bounds an embedded disc on the projection plane.
The discs can be made disjoint by pushing their interiors below the projection plane. Then for each resolved crossing we join the corresponding arcs of the state circles by a half-twisted band as shown in Figure \ref{fig.resolve}.  The reader is referred to \cite{Gutsbook} and references therein for additional details. Clearly the graph $\G_A$ (resp.\! $\G_B$)
	is a spine of the surface $S_A(D)$ (resp.\! $S_B(D)$).
	
	\begin{figure}[h!]
		\includegraphics[scale=.6]{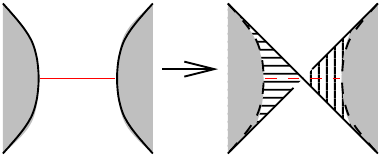}
		\hspace{2cm}
		\includegraphics[scale=.6]{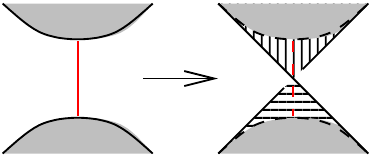}
		\caption{The $A$-resolution (left) the $B$-resolution (right)  of a crossing and their contribution to state surfaces. In both cases the edges of the state graph are shown in red.}
		\label{fig.resolve}
	\end{figure}
	
	\begin{defn}
	The diagram $D$ is called \emph{$A$-adequate}
	(resp.\!  \emph{$B$-adequate}) if  ${\mathbb G}_A$ (resp.\!  ${\mathbb G}_B$) contains no $1$-edge loops.
	\end{defn}
	
	Let $D$ be a connected positive diagram of a link $L$ and consider the Seifert graph ${\G}'_W$ obtained from the all-$W$ resolution described in the beginning of Section
	\ref{sec:leadtwo}.  We will use ${\overline D}$ to denote $D$ with the orientations ignored. Furthermore, for a crossing $x$ of $D$ we will use ${\overline x}$ to denote the corresponding crossing in ${\overline D}$.
	We can take the all-$B$ resolution of $\overline D$ and the corresponding state graph $\G'_B(\overline D)$ and the corresponding state surface 
	$S_B(\overline D)$. 	We need the following known lemma (proved for instance in \cite{Gutsbook}) and include a proof for completeness.

	\begin{lem}\label{lem.known} 
	 Let $D$ be a positive connected diagram of a link $L$. The following hold:
	\begin{enumerate}[(1)]
			\item the graphs $\G_W(D)$ and $\G_B({\overline D})$ are isomorphic,
			\item the state surface $S_B(\overline{D})$ is orientable and it is isotopic to the Seifert surface obtained by applying Seifert's algorithm to $D$,
			\item the diagram $\overline D$ is $B$-adequate.
		\end{enumerate}
\end{lem}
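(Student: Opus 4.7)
My plan is to establish the cycle of implications $(1) \Longrightarrow (2) \Longrightarrow (3) \Longrightarrow (1)$. The underlying observation is that both $F_D$ (the Seifert surface) and $S_B(\overline{D})$ are disc-and-band surfaces with one disc per vertex of the respective spine and one band per crossing of $D$; their spines are $\G_W(D)$ and $\G_B(\overline{D})$, and in each case the edges of the spine are naturally indexed by crossings of $D$. Only the vertex sets — Seifert circles versus $B$-state circles — can differ, so the lemma pinpoints exactly when they match.

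For $(1) \Longrightarrow (2)$, a graph isomorphism matches discs to discs and bands to bands, so after a local isotopy at each crossing the two surfaces become isotopic in $S^3$ with the link $L$ fixed as boundary. Since $F_D$ is always orientable (Seifert surfaces of oriented links are), $S_B(\overline{D})$ is also orientable. For $(2) \Longrightarrow (3)$, I would argue contrapositively: a $1$-edge loop in $\G_B(\overline{D})$ corresponds to a half-twisted band whose two feet attach to the same disc of $S_B(\overline{D})$, producing a M\"obius-like local configuration that obstructs orientability. Hence orientability of $S_B(\overline{D})$ forces $\overline{D}$ to be $B$-adequate.

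The heart of the argument is $(3) \Longrightarrow (1)$. I would set up a canonical local comparison, at each positive crossing, between the oriented Seifert smoothing and the $B$-smoothing, and track how this yields an edge-preserving comparison map between the Seifert circles of $D$ and the $B$-state circles of $\overline{D}$. The $B$-adequacy hypothesis — no $1$-edge loops in $\G_B(\overline{D})$ — is exactly what prevents two distinct Seifert circles from being merged into a single $B$-state circle at some crossing; propagating the local picture globally across the connected diagram $D$, positivity together with $B$-adequacy forces the comparison map to be a bijection on vertex sets, which combined with the canonical identification of edges gives the graph isomorphism $\G_W(D) \cong \G_B(\overline{D})$.

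The main obstacle will be the combinatorial bookkeeping in this last step: relating $B$-state circles to Seifert circles at each positive crossing requires a careful analysis of how the Kauffman-type $B$-smoothing interacts with the oriented Seifert smoothing, and globally controlling the possible merging or splitting of circles through the $B$-adequacy hypothesis. Once this local-to-global comparison is in place, the remaining two implications follow by matching the combinatorial and topological structures of $S_B(\overline{D})$ and $F_D$.
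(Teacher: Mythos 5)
There is a genuine gap, and it is the central one: you never isolate the single observation on which the paper's entire proof rests, namely that at a \emph{positive} crossing the oriented ($O$-) smoothing, with orientations forgotten, is \emph{literally the same local picture} as the $B$-smoothing (Figure \ref{fig.OB}). Consequently the $B$-state circles of $\overline{D}$ are exactly the Seifert circles of $D$, the edges are indexed by the same crossings in the same positions, and $\G_W(D)=\G_B(\overline{D})$ as embedded labeled graphs --- unconditionally, with no hypothesis needed. From this, (2) and (3) also hold unconditionally: the two disc-and-band surfaces are built from identical discs and identical bands, and $\G_B(\overline{D})$ inherits bipartiteness from the Seifert graph $\G_W(D)$, which rules out $1$-edge loops (giving $B$-adequacy) and gives orientability of $S_B(\overline{D})$. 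The lemma is an ``equivalence'' only in the degenerate sense that all three statements are always true for a connected positive diagram. Your plan instead treats the three conditions as genuinely conditional and routes the real content into $(3)\Rightarrow(1)$, which you explicitly leave unexecuted as ``combinatorial bookkeeping'' controlling ``possible merging or splitting of circles through the $B$-adequacy hypothesis.'' No merging or splitting occurs --- the comparison map on circles is the identity --- and $B$-adequacy is a \emph{conclusion} here, not a needed hypothesis; so the step you identify as the heart of the argument is aimed at a difficulty that does not exist, while the fact that would resolve everything is never stated.

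A secondary problem: your $(1)\Rightarrow(2)$ step infers an isotopy of embedded surfaces in $S^3$ from an \emph{abstract} isomorphism of their spines. That inference is not valid in general --- two disc-and-band surfaces with isomorphic spines need not be isotopic; one needs the isomorphism to be induced by an identification of the actual state circles and bands in the plane, which again comes only from the coincidence of the two smoothings. Your $(2)\Rightarrow(3)$ contrapositive (a half-twisted band with both feet on one state circle forces non-orientability) is fine, but it is the only implication you actually complete.
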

	
\begin{proof}  We begin with the following direct observation: For any crossing of $x$ of $D$, the $O$-resolution of $x$,  with orientations of the arcs ignored, is identical to the $B$-resolution of ${\overline x}$ in ${\overline D}$. See Figure \ref{fig.OB}. 
	
	\begin{figure}[h!]
		\begin{tikzpicture}[scale=1]
			\draw[->] (1,0) to [out=90,in=-90] (0,1);
			\draw[over,->] (0,0) to[out=90,in=-90] (1,1);
			\node at (.5,-.5) {positive crossing};
			\node at (.5,1.5) {\phantom{positive crossing}};
		\end{tikzpicture}
		$\xmapsto{\mbox{$O$-resolution}}$
		\begin{tikzpicture}
			\draw[->] (0,0) .. controls (.5,.25) and (.5,.75) .. (0,1);
			\draw[->] (1,0) .. controls (.5,.25) and (.5,.75) .. (1,1);
			\node at (.5,-.5) {$O$-resolution};
			\node at (.5,1.5) {\phantom{$O$-resolution}};
		\end{tikzpicture}
		$\xmapsto{\mbox{~$~\overline{~\cdot~}~$~}}$
		\begin{tikzpicture}
			\draw[] (0,0) .. controls (.5,.25) and (.5,.75) .. (0,1);
			\draw[] (1,0) .. controls (.5,.25) and (.5,.75) .. (1,1);
			\node at (.5,-.5) {$B$-resolution};
			\node at (.5,1.5) {\phantom{$B$-resolution}};
		\end{tikzpicture}
		\caption{The $O$-resolution of a positive crossing with the orientations ignored is identical to the $B$-resolution.}
		\label{fig.OB}
	\end{figure}
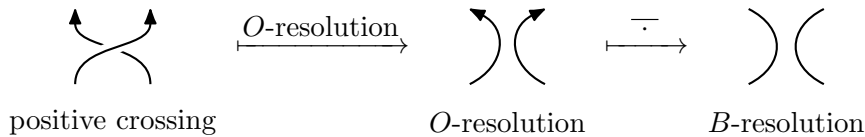
Now the first claim follows easily from the definitions of $\G_W(D)$ and $\G_B({\overline D})$
and Remark \ref{rem.same}.  It follows that the graph $G_B(\overline {D})$ is bipartite since $G_W(D)$ is. Hence ${S}_B({\overline D})$ is orientable (compare  \cite[Lemma 2.3]{Gutsbook}).
The remainder of the claim in (2) follows from part (1) and  the claim that $\overline D$ is $B$-adequate also follows from the fact that  $\G_B(\overline {D})$ is bipartite. 
\end{proof}

	We now give the proof of Theorem \ref{thm.fiber}.
	\begin{proof} 
		Suppose that $D=D(L)$ is a connected positive diagram of  fibered link $L$. By definition, $L$ is an oriented link. Applying Seifert's algorithm to $D$ we obtain a Seifert surface $S=S(D)$ that is of minimum genus \cite[Corollary 4.1]{homogeneous}.
		Then $S$ is a fiber of a fibration of $S^3\setminus L$ over $S^1$ \cite[Theorem 4.1.10]{AKbook}. On the other hand, by Lemma \ref{lem.known}, the surface $S$ is $S_B(\overline D)$. That is $S=S_B(D)$. Now \cite[Theorem 5.11]{Gutsbook} (its version for $B$-adequate diagrams)
states that the following two are equivalent. See also also \cite{Futer} for another proof of the result:
		
		\begin{enumerate}[(a)]
			\item $S$ is a fiber of a fibration $S^3\setminus L$ over $S^1$\,,
			\item the reduced graph ${\mathbb G}'_B(\overline D)$ is a tree.
		\end{enumerate}
		Since ${\mathbb G}_B(\overline D)=\G_W(D)$, we determine that $\G'_W(D)$ is a tree.
		Hence $ \g_2 = (\scs-\edges')=1$. This proves that $(1)\implies (2)\implies (3)$.

		To finish the proof we show that $(3)\implies (1)$.
		Suppose  $\gamma_2=(\scs-\edges')=1$. Then the reduced graph ${\mathbb G}'_W$ is a connected tree and again by
		\cite[Theorem 5.11]{Gutsbook}, the surface $S_B(\overline D)=S(D)$ is a fiber of a fibration $S^3\setminus L$ over $S^1$. Hence $L$ is fibered.
	\end{proof}

	\smallskip
	\medskip


	\section{The third coefficient of the $\Atwo$ invariant}\label{sec:leadthree}
	Continuing our work from Section \ref{sec:leadtwo} we compute the third coefficient of the $\Atwo$ invariant of positive links in terms of Seifert graphs of positive diagrams.	
	We prove the following.	
	
	\begin{thm}\label{thm.third}
		Let $L$ be a positive link. The first three leading terms of the $\Atwo$ invariant are expressed as 
		\[\lla L\rra = q^{2(\scs-\xings)} +(\scs-\edges')q^{2(\scs-\xings-1)}+
		\left({\scs-\edges'+1\choose 2}+\mu -\th \right)q^{2(\scs-\xings-2)} + \textup{lower degree terms}\]		
where $\scs$ and  $\xings$ are as in Theorem \ref{thm.second}, $\mu$ is the number of edges in $\G'_W$ that have multiplicity greater than one  in $\G_W$, and $\th$ is the number of pairs of edges in $\G'_W$  which are {mixed at a vertex} in  $\G'_W$.
\end{thm}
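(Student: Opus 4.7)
The plan is to follow the state-sum strategy used in the proof of Theorem \ref{thm.second}. By Proposition \ref{prop.dec} and Remark \ref{rem.deccond}, any state $s$ whose state graph $\G_s$ has $\scs - k$ components satisfies $d(s) \leq 2(\scs - \xings - k)$, so only states with at least $\scs - 2$ components in $\G_s$ can contribute to the coefficient of $q^{2(\scs - \xings - 2)}$. Since the number of components of a graph on $\scs$ vertices with $r$ edges and cycle rank $c$ equals $\scs - r + c$, and we need $r - c \leq 2$, the relevant reduced state graphs $\G_s'$ fall into five classes: (i) the empty graph (the state $O$), (ii) a single reduced edge, (iii) two disjoint reduced edges, (iv) two reduced edges sharing a vertex, and (v) three reduced edges forming a triangle.

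For classes (i)--(iii) the contribution can be extracted directly. Expanding $Y(O) = q^{-2\xings}[3]^\scs$ yields a contribution of $\binom{\scs + 1}{2}$. For class (ii), Lemma \ref{lem.oneW} provides the explicit formula for $Y(s)$; extracting the coefficient of $q^{2(\scs - \xings - 2)}$ and summing over states grouped by their reduced edge using the binomial identities $\sum_{j=1}^{k_i}\binom{k_i}{j}(-1)^j = -1$ and $\sum_{j=1}^{k_i}\binom{k_i}{j}(-1)^j j = -\delta_{k_i,1}$ gives a total of $-\scs\edges' + \mu$, where $\mu$ emerges precisely because the refined sum distinguishes multiplicity-one edges from those of higher multiplicity. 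For class (iii), the two activated edges have disjoint neighborhoods, so $\W(s)$ factors as $[3]^{\scs - 2}[2]^{\b(s)}$ by the same local argument behind Lemma \ref{lem.oneW}; only the leading coefficient $(-1)^{\b(s)}$ matters, and summing gives a contribution of $1$ for each disjoint pair of reduced edges, for a total of $\binom{\edges'}{2} - p$, where $p$ is the number of pairs of reduced edges sharing a vertex in $\G_W'$.

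Combining these three contributions and applying the identity $\binom{\scs + 1}{2} - \edges'\scs + \binom{\edges'}{2} = \binom{\scs - \edges' + 1}{2}$, the partial sum equals $\binom{\scs - \edges' + 1}{2} + \mu - p$. Matching the target formula therefore reduces to showing that classes (iv) and (v) together contribute exactly $p - \th$.

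The main obstacle is computing these last two contributions, since at a shared vertex the web $\W(s)$ does not split into independent bubbles: the shared Seifert circle participates in both $W$-resolutions, so the local web must be simplified via the square relation of Equation \eqref{eq.webrels}, and the outcome depends on the relative cyclic positions of the two edges at the shared circle, which is precisely the notion of edges being \emph{mixed at a vertex}. The expected outcome is that each non-mixed pair in class (iv) contributes $1$ while each mixed pair contributes $0$, producing the $-\th$ correction, and that the triangle states in class (v) contribute $0$ after a similar square-relation analysis. Making this rigorous requires a careful case-by-case evaluation of the local web configurations near a shared vertex, together with systematic use of the web calculus, and is where the proof most delicately uses the combinatorics encoded by $\mu$ and $\th$.
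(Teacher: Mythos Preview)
Your overall strategy coincides with the paper's: classify states by the number of reduced edges in $\G_s'$, compute the easy cases directly, and isolate the delicate two-edge case. The contributions you extract for classes (i)--(iii) are correct and match the paper's Cases (A), (B), and part of (C1).

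Two corrections, one minor and one substantive:

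\textbf{Class (v) does not exist.} The Seifert graph $\G_W$ of any diagram is bipartite (Seifert circles are nested or side-by-side, so adjacent circles alternate between two colour classes), hence $\G_W'$ contains no triangles. The paper notes this explicitly: ``triangles are not allowed due to biparticity of the state graph.'' You should simply delete this case.

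\textbf{The separated part of class (iv) is not hard; the mixed part is where the entire proof lives.} When two reduced edges share a vertex but are \emph{separated}, the web $\W(s)$ simplifies exactly as in your class (iii): the bubbles on the shared Seifert circle do not interleave, so $\lla\W(s)\rra=[3]^{\scs-2}[2]^{\b(s)}$ still holds and each such pair contributes $1$. This is why the paper groups disjoint and separated pairs together as ``not mixed'' in its Case (C1), yielding $\binom{\edges'}{2}-\th$ directly. Your real obligation is to show that each \emph{mixed} pair contributes $0$, and this is not a routine square-relation check. The paper devotes all of \S5.1--5.2 to it: one must (a) parametrise states over a mixed pair by their ``state mixing index'' $n$, (b) prove via an explicit web identity (Lemma \ref{lem.TraceSquare}) that states with $n>1$ have $d(s)\leq 2(\scs-\xings-3)$ and hence do not contribute, and (c) show by a nontrivial combinatorial cancellation (Lemmas \ref{lem.chunksum}--\ref{lem.semisum}) that the semi-mixed states ($n=1$) sum to zero. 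Your proposal names the expected outcome but supplies none of this machinery, which is the genuine content of the theorem.
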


By Lemma \ref{lem.known} the Seifert graph $\G_W(D)$ of a positive diagram $D$ is the all-$B$ graph of  $D$ with orientations ignored.
The quantity $\theta$ for $G_B$ is defined in \cite{DL} where the authors compute the three trailing terms of the Jones polynomial of  $B$-adequate links. We recall the definition below.

Before we proceed with the proof of Theorem \ref{thm.third} we explain how Theorem \ref{thm.thirdi}, stated in the introduction, is a consequence.
\begin{proof}[Proof of Theorem \ref{thm.thirdi}]
Let $L$ be  a link with a connected positive  diagram  $D=D(L)$. Then
	\begin{enumerate}[(i)]
	\item the surface $S$ obtained by applying Seifert's algorithm to $D$ realizes $\chi(L)$,
	\item if the Seifert graph $\G_W(D)$ is a connected tree, then $D$ represents the unknot.
	\end{enumerate}
By Theorem \ref{thm.third}  the highest degree of $\lla L\rra $ is $2 \chi(\G_W(D))$.
By (i), and since $\G_W(D)$ is a spine for $S$,
$\chi(\G_W(D))=\chi(S)=\chi(L)\leq 1$.  If $\chi(L)=1$, then $\G_W(D)$ is  a tree and  by (ii) above $D$ represents the unknot.
Hence, part (1) of the theorem follows.

Again by	 Theorem \ref{thm.third}  the leading coefficient of $\lla L\rra $ is 1 and   $\g_2\coloneqq \scs-\edges'=\chi(\G'_W(D))\leq  1$,
proving part (2) of the theorem. Part (3) also follows at once from Theorem \ref{thm.third} since $\g_2=\scs-\edges'$.
\end{proof}

\begin{rem} 
The formulae for the coefficients $\g_i$  above are analogues of the Dasbach-Lin formulae \cite{DL} for the coefficients of the Jones polynomial of alternating, and more generally adequate, links.
The formulae obtained in \cite[Theorem 4.1]{DL} as well as the combinatorics underlying the proof their theorem are similar to these of Theorem \ref{thm.third}.
The actual values of the  coefficients of the two polynomials are different.
\end{rem}

\smallskip
\smallskip

	\subsection{Separated and mixed states} Here we define terminology and we prove some lemmas we need for the proof of Theorem \ref{thm.third}.
	
	Consider $\G_W=\G_W(D)$ as the Seifert graph of a positive diagram $D$ endowed with a cyclic ordering of half-edges at each vertex.
	\begin{defn}\label{def:mixed} Two edges  $e_1'$ and $e_2'$ of  $\G_W'$ are \emph{disjoint} if they do not share a common vertex. 
	If $e_1'$ and $e_2'$ are not disjoint, then we say they are \emph{separated} if, in the cyclic order at their common vertex in $\G_W$, the edges can be separated into two consecutive sets of edges, one over $e_1'$ and the other over $e_2'$ (ignoring other reduced edges in the graph). 
	If these edges are neither disjoint nor separated, then they are called \emph{mixed}. See Figure \ref{fig.alt}.
	\end{defn}

	Suppose $e_1'$ and $e_2'$ have a common vertex in $\G'_W$. The lifts of these edges in $\G_W$ are partitioned into sets of $(a_1, a_2,\dots , a_m)$ and $(b_1, b_2,\dots , b_m)$ of  parallel multiple edges, so that all the half-edges from $e_1$ and $e_2$ at the common vertex alternate in sets of size $a_1, b_1, a_1,b_2,\dots, a_m,b_m$. Note that the set of edges $a_1, b_1, a_1,b_2,\dots, a_m,b_m, a_{m+1}$ is equivalent to a set of edges $a_1+a_{m+1},b_2,\dots, a_m,b_m$ as shown in the Figure \ref{fig.alt}. We call $m$ the \emph{mixing index} of the pair of edges. Now $e_1'$ and $e_2'$ are separated if and only if $m=1$ and they are mixed otherwise. 

	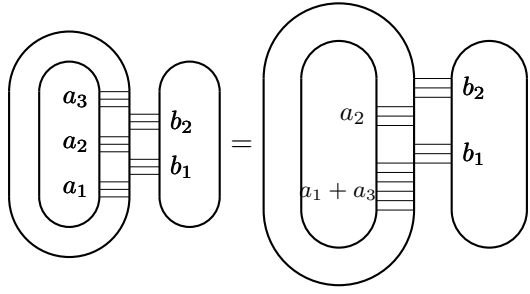
\begin{figure}[h!]
		\begin{tikzpicture}[scale=.4]
			\def\y{3.5}
			\draw (0,0) arc(0:-180:1);
			\draw (0,\y) arc(0:180:1);
			\draw (0,0) to (0,\y);
			\draw (-2,0) to (-2,\y);
			\draw (1,0) arc(0:-180:2);
			\draw (1,\y) arc(0:180:2);
			\draw (1,0) to (1,\y);
			\draw (-3,0) to (-3,\y);
			\draw (2,0) arc(180:360:1);
			\draw (2,\y) arc(180:0:1);
			\draw (2,0) to (2,\y);
			
			\draw (4,0) to (4,\y);
			\foreach \y / \ytext in {0/1,1.5/2,3/3}{
				\foreach \x in {0,.25,.5}{
				\draw[very thin] (0,\y+\x) to (1,\y+\x);
				\node[left] at (0,\y+.25) {\small$a_\ytext$};
				}
			}
			\foreach \y/ \ytext in {.75/1,2.25/2}{
				\foreach \x in {0,.25,.5}{
					\draw[very thin] (1,\y+\x) to (2,\y+\x);
					\node[right] at (2,\y+.25) {\small$b_\ytext$};
				}
			}
		\end{tikzpicture}
		~=~
		\begin{tikzpicture}[scale=.5]
			\def\y{3.5}
			\draw (0,0) arc(0:-180:1);
			\draw (0,\y) arc(0:180:1);
			\draw (0,0) to (0,\y);
			\draw (-2,0) to (-2,\y);
			\draw (1,0) arc(0:-180:2);
			\draw (1,\y) arc(0:180:2);
			\draw (1,0) to (1,\y);
			\draw (-3,0) to (-3,\y);
			\draw (2,0) arc(180:360:1);
			\draw (2,\y) arc(180:0:1);
			\draw (2,0) to (2,\y);
			\draw (4,0) to (4,\y);
			\foreach \y / \ytext in {0,.75,2.25/2}{
				\foreach \x in {0,.25,.5}{
					\draw[very thin] (0,\y+\x) to (1,\y+\x);					
				}
			}
			\node[left] at (0,2.5) {\small$a_2$};
			\node[left] at (0.3,.5) {\footnotesize$a_1+a_3$};
			\foreach \y/ \ytext in {1.25/1,3/2}{
				\foreach \x in {0,.25,.5}{
					\draw[very thin] (1,\y+\x) to (2,\y+\x);
					\node[right] at (2,\y+.25) {\small$b_\ytext$};
				}
			}
		\end{tikzpicture}
		\caption{An example of a state diagram over a mixed pair of edges with index 2. These alternating edges realize the equality $\G(a_1,b_1,a_2,b_2,a_3)=\G(a_1+a_3,b_1,a_2,b_2)$.}\label{fig.alt}
	\end{figure}

	The terminology above adapts to the components of reduced graphs $\G_s'$ which have exactly two edges, i.e.\! we ignore other vertices and edges away from a shared vertex. Suppose that $\G_s'$ has exactly one component with two edges, then we identify $\G_s\setminus\{v\mid \deg(v)=0\}$ with $\G(u_1,v_1,\dots, u_m,v_m)$ and the corresponding web is denoted $\W(u_1,v_1,\dots, u_m,v_m)$ where $0\leq u_j\leq a_j$ and $0\leq v_j\leq b_j$.

	 Since some $u_j$ and $v_j$ may be zero, the edges of $\G_s'$ may be mixed but the \emph{state mixing index} $n$ may be less than the mixing index $m$ of $\G_W'$. In this case $\G(u_1,v_1,\dots, u_m,v_m)=\G(u_1',v_1',\dots, u_n',v_n')$ with $u_j',v_j'>0$. In particular, if $n=1$ we call the state $s$ \emph{semi-mixed}.
	
		In a semi-mixed state, there is a set of consecutive indices $i,i+1,\dots i+\ell$ considered modulo $m$ such that in the states with graph $\G(u_1,v_1,\dots, u_m,v_m)\subseteq \G(a_1,b_1,\dots, a_m,b_m)$
all of the following conditions hold:
		\begin{enumerate}[(i)]
			\item the entries $u_j$ for $j<i$ and $j>i+\ell$ must be zero,
			\item at least one $u_i,\dots,u_{i+\ell} $ is nonzero,
			\item the entries $v_i,\dots, v_{i+\ell-1}$ must be zero,
			\item at least one $v_j$ is nonzero for  $j<i$ or $j\geq i+\ell$.
		\end{enumerate}
		We denote the support of the $u_j$ by $I_u\subset \{i,i+1,\dots i+\ell\}$ and the support of the $v_j$ by $I_v$. Observe that $$|I_u|+|I_v|\leq m+1 \ \ {\rm  and} \ \  |I_u\cap I_v|\leq 1.$$

In the remainder of this section, we prove some lemmas  that we will need for the proof of Theorem \ref{thm.third}.
The next lemma computes
 the contribution to the third coefficient of the web corresponding to a graph $\G(u_1,v_1,\dots, u_m,v_m)$ for a semi-mixed state, while the following two lemmas
 show that the number of states with a support $(I_u,I_v)$ is only a function of $|I_u|+|I_v|$. 		
	\begin{lem}\label{lem.chunksum}
		Consider the set of states whose web-resolution lies over a single pair of mixed reduced edges with state mixing index $n=1$ and a fixed set of supports $I_u$ and $I_v$. The leading term in the sum of $Y(s)$ over all such states is $(-1)^{|I_u|+| I_v|}q^{2(\scs-\edges-2)}$.
	\end{lem}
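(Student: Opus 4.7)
The plan is to compute the leading term of $Y(s)$ for each individual semi-mixed state with the specified supports, then sum these contributions.

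For an individual state, fix $s$ with $u := \sum_{j\in I_u}u_j\ge 1$ and $v := \sum_{j\in I_v}v_j\ge 1$. The state graph $\G_s$ has the three Seifert circles $C_0$, $C_1$, $C_2$ (where $C_0$ is the shared vertex) in one component together with $\scs - 3$ isolated vertices, so Remark~\ref{rem.deccond} immediately gives $d(s) \le 2(\scs - \edges - 2)$. To pin down the leading coefficient I will evaluate $\lla \W(s) \rra$ directly. The condition $n = 1$ forces the selected $W$-fragments at $C_0$ to split cyclically into a contiguous $a$-block of $u$ fragments to $C_1$ followed by a contiguous $b$-block of $v$ fragments to $C_2$; any intervening $O$-resolved crossings introduce no trivalent vertices and so do not obstruct the argument. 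Within each block, every pair of consecutive $W$-fragments has its $C_0$- and $C_1$- (respectively $C_2$-) arcs bounding a bigon. Applying the bubble relation of \eqref{eq.webrels} exactly $u-1$ times inside the first block and $v-1$ times inside the second contributes a factor $[2]^{u+v-2}$ and reduces $\W(s)$ to the four-vertex, six-edge web that appears in the $u = v = 1$ case. One further bubble reduction followed by the circle relation evaluates this reduced web to $[3]^{\scs - 2}[2]^{2}$, yielding $\lla\W(s)\rra = [3]^{\scs-2}[2]^{u+v} + \textup{lower degree terms}$. Combining this with the phase $\phi(s) = (-1)^{u+v}q^{-2\edges-(u+v)}$ gives $Y(s) = (-1)^{u+v}q^{2(\scs-\edges-2)} + \textup{lower degree terms}$.

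For the summation step, each state with the prescribed supports is determined by the multiplicities $(u_j)_{j\in I_u}$ and $(v_j)_{j\in I_v}$, together with a choice of which $u_j$ (respectively $v_j$) of the $a_j$ (respectively $b_j$) parallel edges in chunk $j$ are $W$-resolved; this yields $\binom{a_j}{u_j}$ and $\binom{b_j}{v_j}$ states per multiplicity profile, all with the same leading coefficient $(-1)^{u+v}$. The sum of leading terms therefore factors as
\[
\Bigl(\prod_{j\in I_u}\sum_{u_j=1}^{a_j}(-1)^{u_j}\binom{a_j}{u_j}\Bigr)\Bigl(\prod_{j\in I_v}\sum_{v_j=1}^{b_j}(-1)^{v_j}\binom{b_j}{v_j}\Bigr)\, q^{2(\scs-\edges-2)}.
\]
Since $a_j,b_j\ge 1$ and $\sum_{k=0}^{n}(-1)^{k}\binom{n}{k} = 0$, each inner sum equals $-1$, and the product collapses to $(-1)^{|I_u|+|I_v|}$, as claimed.

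The main obstacle is the bubble analysis in the first step: one must verify that non-contiguous $W$-selections within a single chunk, as well as the possible overlap index $I_u \cap I_v = \{i+\ell\}$, do not disrupt the bigon structure needed to perform the $u+v-2$ bubble reductions. Because $O$-resolutions produce only parallel curving arcs with no trivalent vertices, the bigons between consecutive $W$-fragments in the same block survive regardless of how the selection is made within each chunk, and the reduction proceeds uniformly across all semi-mixed states with the given supports.
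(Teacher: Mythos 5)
Your proof is correct and follows essentially the same route as the paper: identify the leading term of $Y(s)$ for each semi-mixed state with the given supports as $(-1)^{u+v}q^{2(\scs-\edges-2)}$, then collapse the sum over states via the vanishing alternating binomial sums $\sum_{u_j=1}^{a_j}(-1)^{u_j}\binom{a_j}{u_j}=-1$. The only difference is that you justify the evaluation $\lla\W(s)\rra=[2]^{u+v}[3]^{\scs-2}$ explicitly by bubble reductions, whereas the paper simply asserts the corresponding formula for $Y(s)$ (it is the $n=1$ instance of the computation carried out later in Case (C2) of the proof of Theorem \ref{thm.third}); your bigon analysis is sound, so this is added detail rather than a divergence.
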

	\begin{proof}
		In any such state $Y(u_1,v_1,\dots, u_m,v_m)$ is given by $(-q)^{2\edges-\sum u_j - \sum  v_j}[3]^{\scs-2}\cdot[2]^{\sum u_j + \sum v_j}$
		and its leading term is $(-1)^{\sum u_j + \sum v_j}q^{2(\scs-\edges-2)}$. The sum over all such states with this support is
		\[
		\left(\prod_{i\in I_u}\sum_{u=1}^{a_i} {a_i\choose u}(-1)^u\right)
		\left(\prod_{i\in I_v}\sum_{v=1}^{b_i} {b_i\choose v}(-1)^v\right)
		=
		\left(\prod_{i\in I_u}(-1)\right)
		\left(\prod_{i\in I_v}(-1)\right)
		=
		(-1)^{|I_u|+|I_v|}\qedhere
		\]
	\end{proof}
	
	Having summed over states with a fixed support, it remains to count the number of support sets with a given size. We define
	\[
		C_{u,v}=\left|\{I_u,I_v\subseteq\{1,\dots, m\}\mid \mbox{$I_u,I_v$ determine semi-mixed states with $|I_u|=u$, $|I_v|=v$} \}\right|\,.
	\] 
	For example, there are $m{m\choose v}$ different support sets which have $|I_u|=1$ and $|I_v|=v$. Note that $C_{u,v}=C_{v,u}$ and $C_{u,v}=0$ if $u+v>m+1$. By rotational symmetry $C$ is always a multiple of $m$. For $|I_u|<m$, one can see this by assuming the last entry of $I_u$ as a set of consecutive integers is $1$ rather than $j$ with $1\leq j \leq m$. If $|I_u|=m$, then it is easy to see that
	$C_{m,v}=m\delta_{v,1}$. Therefore, we will also write $\overline{C}_{u,v}=C_{u,v}/m$.
	
	\begin{lem}\label{lem.CFormula}
		Consider a pair of mixed reduced edges with index $m$. For $u\leq v$, 
		\[
		\overline{C}_{u,v}=
		\begin{cases}
			\displaystyle{m\choose v} & \mbox{if $u=1$}
			\\
			\displaystyle\sum_{i=v+1}^{m-u+2} {i-1\choose v}{m-i\choose u-2} & \mbox{if $u>1$}
		\end{cases}\,.
		\]
	\end{lem}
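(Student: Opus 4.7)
The plan is to compute $\overline{C}_{u,v}$ by enumerating valid pairs $(I_u, I_v)$ within a canonical rotational representative per orbit. The paper has already observed the rotational symmetry $C_{u,v} = m \cdot \overline{C}_{u,v}$, so I will fix the canonical form to be one in which the minimum-size valid window of the pair has right endpoint at position $m$.

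For the base case $u = 1$, I would take $I_u = \{m\}$ as the canonical representative, whose minimal valid window is $[m, m]$. Since its interior $[m, m-1]$ is empty, the disjointness condition $I_v \cap [i, j-1] = \emptyset$ is vacuous, and $I_v$ may be any size-$v$ subset of $\{1, \ldots, m\}$. This immediately gives $\overline{C}_{1,v} = \binom{m}{v}$.

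For $u > 1$, I would parameterize by the left endpoint $i'$ of the minimal valid window $[i', m]$. A short minimality argument forces $i', m \in I_u$: if $i' \notin I_u$, then $[i'+1, m]$ is a strictly smaller valid window (the disjointness condition only weakens upon shrinking), contradicting minimality; likewise $m \notin I_u$ would render $[i', m-1]$ valid and smaller. Given $i'$, the remaining $u-2$ elements of $I_u$ come from the open interior $\{i'+1, \ldots, m-1\}$, contributing $\binom{m-i'-1}{u-2}$ choices. The constraint $I_v \cap [i', m-1] = \emptyset$ forces $I_v \subseteq \{1, \ldots, i'-1\} \cup \{m\}$, a set of size $i'$, contributing $\binom{i'}{v}$ choices. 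Summing over $i' \in \{v, \ldots, m-u+1\}$ and substituting $i = i'+1$ yields the stated formula.

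The main obstacle is to justify that the canonical form gives exactly one representative per orbit; equivalently, that the minimum-size valid window of any valid pair is unique. I plan to prove this by contradiction: if $W_1 \neq W_2$ were two distinct minimum valid windows, their intersection is empty (contradicting $I_u \neq \emptyset$), a single proper cyclic arc (which would itself be a strictly smaller valid window, contradicting minimality), or a disjoint union of two arcs (whence $W_1 \cup W_2$ is the whole cycle and a short check shows the union of the two window interiors covers every position, forcing $I_v = \emptyset$). Each case gives a contradiction, so the minimum valid window is unique and the enumeration is sound.
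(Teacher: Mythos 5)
Your proof is correct and follows essentially the same route as the paper's: both exploit the rotational symmetry to fix one canonical representative per orbit (the paper pins the last entry of $I_u$ at $1$, you pin the right endpoint of the minimal window at $m$ --- the same normalization after your substitution $i=i'+1$), then count $I_u$ by its $u-2$ interior elements and $I_v$ by the $i-1$ complementary positions. The only difference is that you explicitly verify uniqueness of the minimal valid window (a point the paper leaves implicit), and your three-case analysis for that verification checks out.
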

	\begin{proof}
		In the case $u=1$ we assume that $I_u=\{1\}$. Then any set $I_v\subset\{1,\dots,m\}$ produces a separated graph and there are ${m\choose v}$ such sets of size $v$. 
		
		For $u>1$, we assume that as sets of consecutive integers modulo $m$ the last entry of $I_u$ is $1$. Then the extent of $I_v$ is determined by the first entry of $I_u$, which is therefore at least $v+1$. The index $i$ of the first entry of $I_u$ may vary from $v+1$ to $m-u+2$ and in each case there are ${i-1\choose v}{m-i\choose u-2}$ choices of sets. Summing over these possibilities gives the indicated expression.
	\end{proof}

Note that if $u>1$ and $v=1$, then $\sum_{i=v+1}^{m-u+2} {i-1\choose v}{m-i\choose u-2}={m\choose v}$. This observation is consistent with the symmetry $C_{u,v}=C_{v,u}$. More generally, $C$ has the following property.

		\begin{lem}\label{lem.CSymm}
			For all $u,v>1$, $C_{u,v}=C_{u+1,v-1}$.
		\end{lem}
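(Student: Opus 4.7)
The plan is to establish the stronger closed-form identity $\overline{C}_{u,v} = \binom{m}{u+v-1}$ for all $u, v \geq 1$ with $u+v \leq m+1$. Once this is in hand, the lemma is immediate, since the right-hand side depends on $u$ and $v$ only through their sum, so $\overline{C}_{u,v} = \overline{C}_{u+1,v-1}$; multiplying through by $m$ gives $C_{u,v} = C_{u+1,v-1}$. Outside the range $u+v \leq m+1$ both sides vanish (and $\binom{m}{u+v-1} = 0$ as well), so the closed form covers all cases.

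First I would handle the case $u \leq v$, $u \geq 2$. Starting from
\[
\overline{C}_{u,v} = \sum_{i=v+1}^{m-u+2} \binom{i-1}{v}\binom{m-i}{u-2}
\]
as in Lemma \ref{lem.CFormula}, I substitute $j = i-1$ and observe that extending the summation range to $0 \leq j \leq m-1$ only adds vanishing terms, since $\binom{j}{v} = 0$ for $j < v$ and $\binom{m-1-j}{u-2} = 0$ for $j > m-u+1$. I then apply the standard Vandermonde-type identity
\[
\sum_{j=0}^{n} \binom{j}{a}\binom{n-j}{b} = \binom{n+1}{a+b+1}
\]
with $n = m-1$, $a = v$, $b = u-2$, to conclude $\overline{C}_{u,v} = \binom{m}{u+v-1}$.

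The case $u = 1$ already matches this closed form, since Lemma \ref{lem.CFormula} directly asserts $\overline{C}_{1,v} = \binom{m}{v}$. The cases with $u > v$ reduce to the previous ones via the symmetry $C_{u,v} = C_{v,u}$ noted just before Lemma \ref{lem.CFormula}, which transfers to $\overline{C}_{u,v} = \overline{C}_{v,u}$. Applying the closed form to both $(u,v)$ and $(u+1, v-1)$ and using $(u+1) + (v-1) = u+v$ then yields the lemma.

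The only step of real substance is recognizing the sum as a Vandermonde-type convolution; after the index shift $j = i-1$ the identification is immediate. One could instead try a bijective argument transferring an element of $I_v$ into $I_u$ across the cyclic boundary, but this would require careful bookkeeping of the arc structure and would not cleanly explain the $(u,v) \leftrightarrow (v,u)$ symmetry, whereas the closed form $\binom{m}{u+v-1}$ does.
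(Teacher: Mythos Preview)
Your proof is correct and takes a genuinely different route from the paper's. The paper computes the difference $\overline{C}_{u+1,v-1}-\overline{C}_{u,v}$ directly, expanding via Pascal's rule and telescoping the resulting sums until everything cancels. You instead evaluate $\overline{C}_{u,v}$ in closed form as $\binom{m}{u+v-1}$ by recognizing the sum from Lemma~\ref{lem.CFormula} (after the shift $j=i-1$) as a Vandermonde convolution $\sum_j \binom{j}{v}\binom{m-1-j}{u-2}=\binom{m}{u+v-1}$. This is both shorter and more informative: the closed form makes the symmetry $C_{u,v}=C_{v,u}$ transparent, extends the lemma to $u\geq 1$, $v\geq 2$, and would also streamline the proof of Lemma~\ref{lem.semisum}, where the paper separately invokes $\sum_{i=1}^{\ell-1}C_{i,\ell-i}=(\ell-1)C_{1,\ell-1}$ together with the value of $C_{1,\ell-1}$. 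The paper's telescoping argument, by contrast, avoids appealing to an external identity and is entirely self-contained, at the cost of a longer computation and no structural explanation for why the answer depends only on $u+v$.
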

		\begin{proof}
			A straightforward computation shows
			\begin{align*}
				\overline{C}_{u+1,v-1}-\overline{C}_{u,v}
				&=
				\sum_{i=v}^{m-u+1} {i-1\choose v-1}{m-i \choose u-1} 
				-
				\sum_{i=v+1}^{m-u+2} {i-1\choose v}{m-i \choose u-2}
				\\
				&=
				\sum_{i=v+1}^{m-u+2} {i-2\choose v-1}{m-i+1 \choose u-1} - {i-1\choose v}{m-i \choose u-2}
				\\
				&=
				\sum_{i=v+1}^{m-u+2} \left\lbrace{i-1\choose v}-{i-2\choose v}\right\rbrace\left\lbrace{m-i \choose u-1}+{m-i \choose u-2}\right\rbrace - {i-1\choose v}{m-i \choose u-2}
				\\
				&=
				\sum_{i=v+1}^{m-u+2} {i-1\choose v}{m-i \choose u-1}-{i-2\choose v}{m-i+1 \choose u-1}
				\\
				&=
				\sum_{i=v+1}^{m-u+1} {i-1\choose v}{m-i \choose u-1} - \sum_{i=v+2}^{m-u+2}{i-2\choose v}{m-i+1 \choose u-1}
				\\
				&=
				\sum_{i=v+2}^{m-u+2}{i-2\choose v}{m-i+1 \choose u-1} - {i-2\choose v}{m-i+1 \choose u-1}=0\,.\qedhere
			\end{align*}
		\end{proof}
	
	The last lemma in this subsection shows that the total contribution to the third coefficient
	of  semi-mixed states between  a pair of mixed reduced edges  is zero.

	\begin{lem}\label{lem.semisum}
		The sum of leading terms of all semi-mixed states over a pair of mixed reduced edges is zero.
	\end{lem}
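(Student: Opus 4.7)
The plan is to compute the sum of leading terms explicitly, organizing the states by the sizes of their supports and applying the counts $C_{u,v}$ together with the symmetries already established.

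By Lemma \ref{lem.chunksum}, each semi-mixed state with supports $(I_u, I_v)$ contributes a leading term of $(-1)^{|I_u| + |I_v|}\,q^{2(\scs - \edges - 2)}$. Grouping states by their support sizes $(u, v)$ turns the sum in question into
$$
q^{2(\scs - \edges - 2)} \sum_{\substack{u, v \ge 1 \\ u + v \le m+1}} (-1)^{u+v}\,C_{u,v},
$$
so it suffices to show the signed sum of the $C_{u,v}$'s is zero.

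The heart of the argument is to prove that $C_{u,v}$ in fact depends only on $u+v$; concretely, I expect the uniform formula $C_{u,v} = m\binom{m}{u+v-1}$. For $u=1$ this is the first clause of Lemma \ref{lem.CFormula}. For $u=2$, the hockey-stick identity applied to the second clause yields $\overline{C}_{2,v} = \binom{m}{v+1} = \overline{C}_{1,v+1}$, bridging the two clauses. Lemma \ref{lem.CSymm} then propagates the equality $C_{u,v} = C_{u+1,v-1}$ across all $u\ge 2$, identifying every $C_{u,v}$ with a single base value depending only on $k = u+v$.

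Having reduced to a single parameter $k$, and noting that there are exactly $k-1$ admissible ordered pairs $(u,v)$ with $u+v = k$ and $u,v \ge 1$, the signed sum collapses to
$$
m \sum_{k=2}^{m+1} (-1)^k (k-1) \binom{m}{k-1} \;=\; -m \sum_{j=1}^{m} (-1)^j j \binom{m}{j}
$$
after the substitution $j = k-1$. Using $j\binom{m}{j} = m\binom{m-1}{j-1}$, this simplifies to a multiple of $\sum_{i=0}^{m-1}(-1)^i \binom{m-1}{i} = (1-1)^{m-1}$, which vanishes for $m \ge 2$. Since mixed edges by definition have index $m \ge 2$, the claim follows.

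The main obstacle is the middle step: verifying the uniform formula for $C_{u,v}$ requires glueing together the two clauses of Lemma \ref{lem.CFormula} at the boundary $u = 1$ versus $u > 1$, and it is precisely here that the hockey-stick identity enters in an essential way. Once that identification is in hand, the remaining binomial manipulation is a routine application of the derivative-of-binomial-theorem identity.
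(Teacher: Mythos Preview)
Your proposal is correct and follows essentially the same approach as the paper: both group semi-mixed states by support, invoke Lemma~\ref{lem.chunksum}, establish that $C_{u,v}$ depends only on $u+v$ (yielding $\sum_{i=1}^{\ell-1}C_{i,\ell-i}=(\ell-1)C_{1,\ell-1}=m(\ell-1)\binom{m}{\ell-1}$), and finish with the same binomial-derivative identity. The only minor difference is that you bridge $C_{1,\ell-1}=C_{2,\ell-2}$ explicitly via the hockey-stick identity, whereas the paper obtains it from Lemma~\ref{lem.CSymm} applied at the boundary together with the symmetry $C_{u,v}=C_{v,u}$ (noted just before Lemma~\ref{lem.CFormula}); either route closes the gap left by the restriction $u,v>1$ in Lemma~\ref{lem.CSymm}.
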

	\begin{proof}
		We showed in Lemma \ref{lem.chunksum} that the sum over all states with a fixed support of size $\ell=u+v$ contributes $(-1)^{\ell}q^{2(\scs-\xings-2)}$. By Lemma \ref{lem.CSymm}, there are $\sum_{i=1}^{\ell-1}C_{i,\ell-i}=(\ell-1)C_{1,\ell-1}=m(\ell-1){m\choose \ell-1}$  supports of size $\ell$. As $\ell$ varies from $2$ to $m+1$ we compute
		\begin{align*}
			m\sum_{\ell=2}^{m+1} {m\choose \ell-1}(-1)^{\ell}(\ell-1)
			=
			m\sum_{\ell=1}^{m+1} {m\choose \ell-1}(-1)^{\ell}(\ell-1)
			=
			-m\sum_{\ell=0}^{m}{m\choose \ell} (-1)^{\ell}\ell = m\cdot \delta_{m,1}\,.
		\end{align*}
		At a mixed pair of edges $m>1$ and so the above sum evaluates to zero.
	\end{proof}
	
	\smallskip
	\smallskip
	
	\subsection{Some web identities} Here we establish a particular web identity (Lemma \ref{lem.TraceSquare})
	that we need for the proof of Theorem \ref{thm.third}.  We need two lemmas that discuss how to simplify webs that contain a sequence of squares.

   \begin{lem}\label{lem.CapSquare}
     The half-capped sequence of $k\geq 0$ squares simplifies as follows:
     \[
		\begin{tikzpicture}[scale=.78]
			\draw[mid<] (0,0) to (1,0);
			\draw[mid>] (1,0) to (2,0);
			\draw[mid<] (2,0) to (3,0);
			\draw[mid>] (3,0) to (4,0);
			\draw[mid>] (0,1) to (1,1);
			\draw[mid<] (1,1) to (2,1);
			\draw[mid>] (2,1) to (3,1);
			\draw[mid<] (3,1) to (4,1);
			\draw[mid<] (0,0) to (0,1);
			\draw[mid>] (1,0) to (1,1);
			\draw[mid<] (2,0) to (2,1);
			\draw[mid>] (3,0) to (3,1);
			\draw[mid<] (4,0) to (4,1);
			\draw[mid<] (0,0) to (-.5,-.5);
			\draw[mid>] (0,1) to (-.5,1.5);
			\draw[mid>] (4,1) arc(90:-90:.5);
			\node at (.5,.5) {$1$};
			\node at  (1.5,.5) {$2$};
			\node at  (2.5,.5) {$\dots$};
			\node at (3.5,.5) {$k$};
		\end{tikzpicture}
		=
		[2]^{k+1}~
		\begin{tikzpicture}
			\draw[->] (0,-.5) to (0,1.5);
		\end{tikzpicture}
	\]
	\end{lem}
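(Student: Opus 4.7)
The plan is a straightforward induction on $k$, using only the bubble relation from Equation~\eqref{eq.webrels}. The key observation is that, for every $k \geq 0$, the cap arc on the right together with the rightmost vertical edge of the ladder bounds a bubble at the right end of the diagram.

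For the base case $k = 0$, the diagram consists of this single bubble, with one external strand entering from the top and one exiting from the bottom. The bubble relation immediately yields $[2]$ times a single strand, matching $[2]^{0+1}$.

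For the inductive step $k \geq 1$, apply the bubble relation to the rightmost bubble. This contributes a factor of $[2]$ and replaces the two trivalent vertices and the two interior edges of the bubble by a single arc joining the horizontal edges that were incident to it. That arc, together with the vertical edge at position $x = k-1$ (the former left edge of the rightmost square), forms a new bubble, so the simplified diagram is precisely a half-capped sequence of $k-1$ squares. By the inductive hypothesis this evaluates to $[2]^k$ times a single strand; multiplying by the factor of $[2]$ obtained in the first step yields $[2]^{k+1}$ times a single strand, as claimed.

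There is no real obstacle to this approach: once one recognizes the cap-plus-rightmost-vertical-edge as a bubble, the argument is essentially mechanical. The only point to verify is that the bipartite source/sink structure is preserved through each reduction, which follows from the alternating orientations of the edges in the ladder and ensures that the bubble relation of \eqref{eq.webrels} applies at every stage of the induction.
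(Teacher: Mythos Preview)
Your proposal is correct and follows essentially the same approach as the paper's proof: induction on $k$, with the base case $k=0$ handled by a single application of the bubble relation from Equation~\eqref{eq.webrels}, and the inductive step reducing the $k$-square diagram to $[2]$ times the $(k-1)$-square diagram by collapsing the rightmost bubble. The paper's proof is terser but structurally identical.
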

	\begin{proof}
		We give a proof by induction. The base case $k=0$ is given by the bubble relation in Equation \eqref{eq.webrels}. If the claim holds for some $n$, then again using the bubble move we have 		\[
		\scalebox{.9}{
		\begin{tikzpicture}
			\draw[mid<] (0,0) to (1,0);
			\draw[mid>] (1,0) to (2,0);
			\draw[mid<] (2,0) to (3,0);
			\draw[mid>] (3,0) to (4,0);
			\draw[mid>] (0,1) to (1,1);
			\draw[mid<] (1,1) to (2,1);
			\draw[mid>] (2,1) to (3,1);
			\draw[mid<] (3,1) to (4,1);
			\draw[mid<] (0,0) to (0,1);
			\draw[mid>] (1,0) to (1,1);
			\draw[mid<] (2,0) to (2,1);
			\draw[mid>] (3,0) to (3,1);
			\draw[mid<] (4,0) to (4,1);
			\draw[mid<] (0,0) to (-.5,-.5);
			\draw[mid>] (0,1) to (-.5,1.5);
			\draw[mid>] (4,1) arc(90:-90:.5);
			\node at (.5,.5) {$1$};
			\node at  (1.5,.5) {$2$};
			\node at  (2.5,.5) {$\dots$};
			\node at (3.5,.5) {$n+1$};
		\end{tikzpicture}
	}
		=
		[2]\scalebox{.9}{\begin{tikzpicture}
			\draw[mid<] (0,0) to (1,0);
			\draw[mid>] (1,0) to (2,0);
			\draw[mid<] (2,0) to (3,0);
			\draw[mid>] (3,0) to (4,0);
			\draw[mid>] (0,1) to (1,1);
			\draw[mid<] (1,1) to (2,1);
			\draw[mid>] (2,1) to (3,1);
			\draw[mid<] (3,1) to (4,1);
			\draw[mid<] (0,0) to (0,1);
			\draw[mid>] (1,0) to (1,1);
			\draw[mid<] (2,0) to (2,1);
			\draw[mid>] (3,0) to (3,1);
			\draw[mid<] (4,0) to (4,1);
			\draw[mid<] (0,0) to (-.5,-.5);
			\draw[mid>] (0,1) to (-.5,1.5);
			\draw[mid>] (4,1) arc(90:-90:.5);
			\node at (.5,.5) {$1$};
			\node at  (1.5,.5) {$2$};
			\node at  (2.5,.5) {$\dots$};
			\node at (3.5,.5) {$n$};
		\end{tikzpicture}
	}
		=
		[2]^{n+2}~
		\begin{tikzpicture}[scale=.78]
			\draw[->] (0,-.5) to (0,1.5);
		\end{tikzpicture}\,.
	\]
	\end{proof}

Our second lemma shows how to simplify webs that contain  ``uncapped"  sequences of squares.
	\begin{lem}\label{lem.UncapSquare}
     The uncapped sequence of $k\geq0$ squares simplifies as follows:
     \[
     \scalebox{.9}{$
     \begin{tikzpicture}
			\draw[mid<] (0,0) to (1,0);
			\draw[mid>] (1,0) to (2,0);
			\draw[mid<] (2,0) to (3,0);
			\draw[mid>] (3,0) to (4,0);
			\draw[mid>] (0,1) to (1,1);
			\draw[mid<] (1,1) to (2,1);
			\draw[mid>] (2,1) to (3,1);
			\draw[mid<] (3,1) to (4,1);
			\draw[mid<] (0,0) to (0,1);
			\draw[mid>] (1,0) to (1,1);
			\draw[mid<] (2,0) to (2,1);
			\draw[mid>] (3,0) to (3,1);
			\draw[mid<] (4,0) to (4,1);
			\draw[mid<] (0,0) to (-.5,-.5);
			\draw[mid>] (0,1) to (-.5,1.5);
			\draw[mid>] (4,1) to (4.5,1.5);
			\draw[mid<] (4,0) to (4.5,-.5);
			\node at (.5,.5) {$1$};
			\node at  (1.5,.5) {$2$};
			\node at  (2.5,.5) {$\dots$};
			\node at (3.5,.5) {$k$};
	\end{tikzpicture}
		=
		\begin{cases}
			\begin{tikzpicture}
				\draw[mid>] (0,0) to (.5,.25);
				\draw[mid>] (1,0) to (.5,.25);
				\draw[mid<] (.5,.25) to (.5,.75);
				\draw[->] (.5,.75) to (0,1);
				\draw[->] (.5,.75) to (1,1);
			\end{tikzpicture}+
			\displaystyle
			\sum_{i=1}^{k/2} [2]^{2i-1}
			\begin{tikzpicture}
				\draw[->] (0,0) .. controls (.5,.25) and (.5,.75) .. (0,1);
				\draw[->] (1,0) .. controls (.5,.25) and (.5,.75) .. (1,1);
			\end{tikzpicture}
			&\mbox{if $k$ is even}
			\\[2em]
			\begin{tikzpicture}[scale=.5]
					\draw[<-] (-.5,1.5) .. controls (0, .5) and (1,.5) ..  (1.5,1.5);
					\draw[->] (-.5,-.5) .. controls (0,.5) and (1,.5) ..  (1.5,-.5);
			\end{tikzpicture} +
			\displaystyle
			\sum_{i=1}^{(k+1)/2} [2]^{2i-2}
			\begin{tikzpicture}
					\draw[->] (0,0) .. controls (.5,.25) and (.5,.75) .. (0,1);
					\draw[<-] (1,0) .. controls (.5,.25) and (.5,.75) .. (1,1);
			\end{tikzpicture}
			&\mbox{if $k$ is odd}
		\end{cases}
		$
	}
		\,.
		\]
	\end{lem}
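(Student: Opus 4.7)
The plan is to induct on $k$, applying the square relation from Equation~\eqref{eq.webrels} to the leftmost square of the chain and using the bubble relation to collapse the resulting configurations.

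The base case $k=1$ is precisely the square relation, since the cup-cap configuration is the second summand (top and bottom arcs) and the two vertical arcs forming the $O$-resolution are the first summand (left and right arcs). For the base case $k=2$, I apply the square relation to the leftmost square: the $V$-resolution produces an external left arc together with an internal arc that forms a bigon with the right edge of the second square, giving $[2]$ times the $O$-resolution after one bubble reduction; the $H$-resolution extends the external NW and SW strands through the dangling corners so that only the right edge of the second square survives with four external strands attached, which is exactly the $W$-web. Combining gives $X_2 = W + [2]\cdot(\text{$O$-resolution})$, as stated.

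For the inductive step with $k \geq 3$, writing $X_k$ for the uncapped $k$-square chain, the goal is to establish the two-step recursion
\[
X_k \;=\; [2]^{k-1}\,(\text{$O$-resolution}) + X_{k-2}.
\]
Apply the square relation to the leftmost square and consider the two resulting summands. In the $V$-summand, the left arc closes off the external NW--SW strands and the right arc, after smoothing the degree-two corners, combines with the top and bottom edges of the second square to form a bigon against its right edge. The bubble relation contributes a factor of $[2]$ and collapses the bigon to a single edge, and the subsequent smoothings create a new bigon at the right edge of the third square. Iterating this cascade through all $k-1$ squares to the right yields the total factor $[2]^{k-1}$ together with the external NE--SE arc, which is $[2]^{k-1}$ times the $O$-resolution. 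In the $H$-summand, the top and bottom arcs, after smoothings, extend the external NW and SW strands and absorb the top and bottom edges of the second square; since the left edge of the second square was shared with, and removed together with, the leftmost square, only the right edge of the second square survives, and this is the left edge of the third square. The remaining web is therefore exactly the uncapped $(k-2)$-square chain starting at the third square, namely $X_{k-2}$.

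Unfolding the recursion from the base cases gives the stated closed form: the even case accumulates $W + \bigl([2]^{1} + [2]^{3} + \cdots + [2]^{k-1}\bigr)\cdot(\text{$O$-resolution})$, and the odd case yields $\textrm{cup-cap} + \bigl([2]^{0} + [2]^{2} + \cdots + [2]^{k-1}\bigr)\cdot(\text{$O$-resolution})$, matching the two cases of the statement. The main obstacle is the cascade analysis in the $V$-summand: one must verify carefully that at each step the smoothing of degree-two vertices produces precisely one new bigon on the adjacent face, that the bubble relation reduces it to a single edge with exactly one factor of $[2]$, and that after $k-1$ iterations the cascade terminates cleanly at the rightmost square by producing the external NE--SE arc rather than another bigon.
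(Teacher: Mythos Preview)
Your proof is correct and follows essentially the same approach as the paper: both establish the two-step recursion $X_k = [2]^{k-1}\cdot(\text{$O$-resolution}) + X_{k-2}$ by applying the square relation once (you to the leftmost square, the paper to the rightmost), and both identify one resolution as $X_{k-2}$ and the other as a strand disjoint from a bigon cascade. The only cosmetic difference is that the paper packages your cascade analysis as the separate Lemma~\ref{lem.CapSquare} (the half-capped chain equals $[2]^{k+1}$ times a strand) and then simply invokes it, whereas you carry out that cascade inline.
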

	\begin{proof}
		We give an inductive proof joined across both cases. The $k=0$ base case is obvious and the $k=1$ base case follows from the square move in Equation \eqref{eq.webrels}. If the claim holds for some $n\geq0$,
		\[
		\scalebox{.9}{
		\begin{tikzpicture}[]
			\draw[mid<] (0,0) to (1,0);
			\draw[mid>] (1,0) to (2,0);
			\draw[mid<] (2,0) to (3,0);
			\draw[mid>] (3,0) to (4,0);
			\draw[mid>] (0,1) to (1,1);
			\draw[mid<] (1,1) to (2,1);
			\draw[mid>] (2,1) to (3,1);
			\draw[mid<] (3,1) to (4,1);
			\draw[mid<] (0,0) to (0,1);
			\draw[mid>] (1,0) to (1,1);
			\draw[mid<] (2,0) to (2,1);
			\draw[mid>] (3,0) to (3,1);
			\draw[mid<] (4,0) to (4,1);
			\draw[mid<] (0,0) to (-.5,-.5);
			\draw[mid>] (0,1) to (-.5,1.5);
			\draw[mid>] (4,1) to (4.5,1.5);
			\draw[mid<] (4,0) to (4.5,-.5);
			\node at (.5,.5) {$1$};
			\node at  (1.5,.5) {$2$};
			\node at  (2.5,.5) {$\dots$};
			\node at (3.5,.5) {$n+1$};
		\end{tikzpicture}
		=
		\begin{tikzpicture}[]
			\draw[mid<] (0,0) to (1,0);
			\draw[mid>] (1,0) to (2,0);
			\draw[mid<] (2,0) to (3,0);
			\draw[mid>] (3,0) to (4,0);
			\draw[mid>] (0,1) to (1,1);
			\draw[mid<] (1,1) to (2,1);
			\draw[mid>] (2,1) to (3,1);
			\draw[mid<] (3,1) to (4,1);
			\draw[mid<] (0,0) to (0,1);
			\draw[mid>] (1,0) to (1,1);
			\draw[mid<] (2,0) to (2,1);
			\draw[mid>] (3,0) to (3,1);
			\draw[mid<] (4,0) to (4,1);
			\draw[mid<] (0,0) to (-.5,-.5);
			\draw[mid>] (0,1) to (-.5,1.5);
			\draw[mid>] (4,1) arc(90:-90:.5);
			\node at (.5,.5) {$1$};
			\node at  (1.5,.5) {$2$};
			\node at  (2.5,.5) {$\dots$};
			\node at (3.5,.5) {$n-1$};
			\draw[->] (5,-.5) .. controls (4.75,.25) and (4.75,.75) .. (5,1.5);
		\end{tikzpicture}
		~+~
		\begin{tikzpicture}[]
			\draw[mid<] (0,0) to (1,0);
			\draw[mid>] (1,0) to (2,0);
			\draw[mid<] (2,0) to (3,0);
			\draw[mid>] (3,0) to (4,0);
			\draw[mid>] (0,1) to (1,1);
			\draw[mid<] (1,1) to (2,1);
			\draw[mid>] (2,1) to (3,1);
			\draw[mid<] (3,1) to (4,1);
			\draw[mid<] (0,0) to (0,1);
			\draw[mid>] (1,0) to (1,1);
			\draw[mid<] (2,0) to (2,1);
			\draw[mid>] (3,0) to (3,1);
			\draw[mid<] (4,0) to (4,1);
			\draw[mid<] (0,0) to (-.5,-.5);
			\draw[mid>] (0,1) to (-.5,1.5);
			\draw[mid>] (4,1) to (4.5,1.5);
			\draw[mid<] (4,0) to (4.5,-.5);
			\node at (.5,.5) {$1$};
			\node at  (1.5,.5) {$2$};
			\node at  (2.5,.5) {$\dots$};
			\node at (3.5,.5) {$n-1$};
		\end{tikzpicture}
	}
		\,.
		\]
		The claim follows from applying Lemma \ref{lem.CapSquare} and the inductive hypothesis to the above terms.
	\end{proof}
\medskip

Now, using  Lemma \ref{lem.UncapSquare}, we have the following:

	\begin{lem}\label{lem.TraceSquare}
		The following equality of webs holds for even $k\geq 0$:
		\[
			\left\la\!\!\left\la~\begin{tikzpicture}[scale=.74]
			\draw[mid<] (0,0) to (1,0);
			\draw[mid>] (1,0) to (2,0);
			\draw[mid<] (2,0) to (3,0);
			\draw[mid>] (3,0) to (4,0);
			\draw[mid>] (0,1) to (1,1);
			\draw[mid<] (1,1) to (2,1);
			\draw[mid>] (2,1) to (3,1);
			\draw[mid<] (3,1) to (4,1);
			\draw[mid<] (0,0) to (0,1);
			\draw[mid>] (1,0) to (1,1);
			\draw[mid<] (2,0) to (2,1);
			\draw[mid>] (3,0) to (3,1);
			\draw[mid<] (4,0) to (4,1);
			\draw[mid<] (0,0) to (-.5,-0);
			\draw[->] (-.5,-0) to[out=135,in=0] (-1.5,.5);
			\draw[->] (-.5,-0) to[out=-135,in=0] (-1.5,-.5);
			\draw[->] (0,1) to (-1.5,1);
			\draw[mid>] (4,1) to (5,1);
			\draw (5,1) to[out=45,in=180] (5.25,1.25);
			\draw (5,1) to[out=-45,in=180] (5.5,.5);
			\draw[mid<] (4,0) to (5.5,0);
			\draw (-1.5,1) arc(270:90:.5);
			\draw (-1.5,.5) arc(270:90:1);
			\draw (-1.5,-.5) arc(270:90:1.75);
			\draw[<-] (5.25,1.25) arc(-90:90:.375);
			\draw[<-] (5.5,.5) arc(-90:90:1);
			\draw (5.5,0) arc(-90:90:1.5);
			\draw (-1.5,2) to (5.25,2);
			\draw (-1.5, 2.5) to (5.5, 2.5);
			\draw (-1.5,3) to (5.5,3);
			\node at (.5,.5) {$1$};
			\node at  (1.5,.5) {$2$};
			\node at  (2.5,.5) {$\dots$};
			\node at (3.5,.5) {$k$};
	\end{tikzpicture}~\right\ra\!\!\right\ra
	~= [2]^2[3]+\sum_{i=1}^{k/2} [2]^{2i}[3]\,.
		\]
	\end{lem}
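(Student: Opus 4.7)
The plan is to apply Lemma \ref{lem.UncapSquare} to the uncapped strip of $k$ squares sitting inside the web, and then evaluate the resulting closed webs one by one. For even $k$, Lemma \ref{lem.UncapSquare} rewrites that strip as a single $W$-resolution term (with coefficient $1$) plus a sum $\sum_{i=1}^{k/2}[2]^{2i-1}$ times an $O$-resolution (two parallel vertical strands). Since $\lla\cdot\rra$ is $\bZq$-linear in the web, the whole evaluation splits as the $W$-contribution plus the weighted sum of $O$-contributions.

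For the $O$-resolution contribution I would observe that the box collapses to the two edges $A$-$B$ and $D$-$C$, leaving a closed web with only the two external trivalent vertices $L$ (the left Y-vertex) and $R$ (the right Y-vertex). The three arms of $L$ route to the three arms of $R$ via, respectively: the $A$-$B$ strand followed by the top cap, the middle cap alone, and the bottom cap followed by the $D$-$C$ strand. Thus $L$ and $R$ are joined by three parallel edges, i.e.\ a theta graph on $S^2$. A single application of the bubble move (to any pair of the three edges) gives $[2]$ times a circle, hence $[2][3]$. Multiplying by $[2]^{2i-1}$ and summing yields $\sum_{i=1}^{k/2}[2]^{2i}[3]$.

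For the $W$-resolution contribution, the box has two new trivalent vertices $X$ (attached to $A$ and $D$ below) and $Y$ (attached to $B$ and $C$ above) connected by a central edge. Combined with the external closure, I would describe the resulting closed web as a graph with four trivalent vertices $L, R, X, Y$ and six edges: two edges $L$-$X$ (the short one through $A$ and the long one through the bottom cap and $D$), two edges $R$-$Y$ (the short one through $C$ and the long one through the top cap and $B$), one edge $L$-$R$ (the middle cap), and the central edge $X$-$Y$. The key step is to check that in the planar embedding the two $L$-$X$ edges are consecutive in the cyclic order at both $L$ and $X$, with no other edge in between, and similarly for $R$-$Y$; a direct trace of the cap heights ($y=3$ for the bottom, $y=2.5$ for the middle, $y=2$ for the top) shows that the middle cap lies entirely in the complementary face, so each double edge indeed bounds a bigon. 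Applying the bubble relation to the $L$-$X$ bigon contributes a factor $[2]$ and merges the $L$-$R$ and $X$-$Y$ edges into a single $R$-$Y$ edge, leaving a theta graph on $R,Y$; a second bubble move gives another $[2]$ and a final circle contributing $[3]$. Hence the $W$-contribution is $[2]^2[3]$.

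Adding the two contributions gives $[2]^2[3] + \sum_{i=1}^{k/2}[2]^{2i}[3]$, as claimed. The main obstacle is the bookkeeping in the $W$-case: one must be careful enough with the cyclic orderings at $L, R, X, Y$ induced by the given planar picture to justify the two bubble reductions, since abstractly the multigraph on four vertices admits more than one planar embedding and only the correct one yields two disjoint bigons. The arithmetic and the theta-graph evaluations are then routine applications of the relations in Equation \eqref{eq.webrels}.
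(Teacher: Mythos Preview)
Your proposal is correct and follows exactly the approach the paper takes: apply Lemma~\ref{lem.UncapSquare} to the strip of $k$ squares and then evaluate the two resulting closures using the relations of Equation~\eqref{eq.webrels}. The paper's own proof is a single sentence to this effect; your write-up simply spells out the bookkeeping, and your identification of the $O$-closure as a theta graph (value $[2][3]$) and of the $W$-closure as a four-vertex web with two genuine bigons (value $[2]^2[3]$) is accurate, including the planarity check you flagged as the main obstacle.
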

	\begin{proof}
		The identity follows immediately from Lemma \ref{lem.UncapSquare} by taking the appropriate closure and applying Equation \eqref{eq.webrels}.
	\end{proof}

	\smallskip
	\smallskip
	
	\subsection{The proof of Theorem \ref{thm.third}}
	
	We are now ready to give the proof of Theorem \ref{thm.third}.
	
	\begin{proof}
			Let $D=D(L)$ be a positive diagram of $L$. To compute the third coefficient of the invariant, we determine which states contribute to the degree $2(\scs-\xings-2)$ term. Remark \ref{rem.deccond} tells us that the state graphs contributing to the term of degree $q^{2(\scs-\xings-2)}$ have at least $\scs-2$ components. Those with exactly $\scs$ or $\scs-1$ components have exactly zero or one reduced edge, respectively. A state graph with $\scs-2$ components has exactly two reduced edges, since triangles are not allowed due to biparticity of the state graph. For states whose reduced graph has $\scs-\xings-2$ edges, we separate into cases where the edges are mixed at a vertex or not. 
			\\
		
		\noindent\textbf{Case(A).} From the all-$O$ state
		\[
		Y(O)=q^{-2\xings}[3]^\scs=q^{2(\scs-\xings)}+\scs q^{2(\scs-\xings-1)}+{\scs\choose2}_{\!2}q^{2(\scs-\xings-2)}+\textup{lower degree terms}
		\]
		where ${\scs\choose2}_2$ is a trinomial coefficient equal to ${\scs\choose 1}+{\scs\choose 2}={\scs+1\choose2}$.\\
		
		\noindent\textbf{Case(B).} If $s$ is a state with a single edge in $\G_s'$, then by Lemma \ref{lem.oneW}
		\[
		Y(s)=(-q)^{-2\xings-\b(s)}[3]^{\scs-1}[2]^{\b(s)}
		=(-1)^{\b(s)}q^{2(\scs-\xings-1)}\left(1+(v-1+\b(s))q^{-2}+\textup{lower degree terms}\right)\,.
		\]
		We sum the $q^{2(\scs-\xings-2)}$ term over all such states. As in the proof of Theorem \ref{thm.second}, we enumerate the edges of $\G_W'$ for $i=1,\dots, \edges'$ and let $k_i$ be the multiplicity of the $i$-th reduced edge relative of $\G_W$. We compute
		\[
		\sum_{i=1}^{\edges'}\sum_{j=1}^{k_i} {k_i\choose j}(-1)^j(\scs-1+j)=-\sum_{i=1}^{\edges'} (\scs-1+\delta_{k_i,1})=-(\scs-1)\edges'-|\lifts_1|=-\scs\edges'+\mu\,,
		\]
		where $|\lifts_1|$ denotes the number of edges in $\G'_W$ that appear with multiplicity one in the unreduced Seifert graph $\G_W$. Now recall that we denoted by $\mu$ the
		 edges in $\G'_W$ that appear with multiplicity more than one in $\G_W$. Since $\mu=\edges'-|\lifts_1|$,  the last equation follows.
		\smallskip
		
		\noindent\textbf{Case(C1).} For states whose reduced graph has $\scs-\xings-2$ edges, we consider the states $s$ where the edges of $\G_s'$ are not mixed first. For such a state
		\[
		Y(s)=(-q)^{-2s-\b(s)}[3]^{\scs-2}[2]^{\b(s)}=(-1)^{\b(s)}q^{2(\scs-\xings-2)}+\textup{lower degree terms}\,.
		\]
		Over a fixed pair of reduced edges with multiplicities $k_1$ and $k_2$ we sum these signs
		\[
		\sum_{j_1=1}^{k_{1}}\sum_{j_2=1}^{k_2} {k_1\choose j_1}{k_2\choose j_2}(-1)^{j_1+j_2}
		=
		\left(\sum_{j_1=1}^{k_{1}}{k_1\choose j_1}
		(-1)^{j_1}\right)\left(\sum_{j_2=1}^{k_2} {k_2\choose j_2}(-1)^{j_2}\right)=(-1)^2=1
		\]
		and the sum over all such pairs of reduced edges gives ${\edges'\choose 2}-\th$ where $\th$ is the number of pairs of mixed reduced edges.\\
		
		\noindent\textbf{Case(C2).} We now consider states where the edges of $\G_s'$ are over a mixed vertex of index $m$ and state mixing index $n\leq m$. First note that the web $\W(u_1,v_1,\dots, u_m,v_m)=\W(u_1',v_1',\dots, u_n',v_n')$ satisfies
		\[
			\W(u_1',v_1',\dots, u_n',v_n')=[2]^{\sum_{i=1}^n(u_i+v_i)-2n}\W(\underbrace{1,1,\dots, 1,1}_{\mbox{$2n$-terms}})\,.
		\]
		Observe that $\W(1,1,\dots,1,1)$ is exactly the closed sequence of squares given in Lemma \ref{lem.TraceSquare} for $k=2(n-1)$. Thus, together with the $\scs-3$ disjoint vertices
		\[
		\lla \W(u_1',v_1',\dots, u_n',v_n') \rra =[2]^{\sum_{i=1}^n(u_i+v_i)-2n}[3]\left([2]^2+\sum_{i=1}^{n-1}[2]^{2i}\right)\cdot [3]^{\scs-3}\,.
		\]
		In such a state $s$, there are $\b(s)=\sum_{i=1}^n(u_i+v_i)$ total $W$-resolutions. Therefore,
		\[
			Y(s)=(-1)^{\b(s)}q^{-2\xings-\b(s)}
			[2]^{\b(s)-2n}[3]^{\scs-2}\left([2]^2+\sum_{i=1}^{n-1}[2]^{2i}\right)
			\,.
		\]
		Thus
		\[
			d(s)=2(\scs-\xings-2-n)+\max(2, 2(n-1))=
			\begin{cases}
				2(\scs-\xings -2)& n=1
				\\
				2(\scs-\xings -3)& n>1
			\end{cases}
		\]
		and so only states which are semi-mixed contribute to the third coefficient. Now by Lemma \ref{lem.semisum}, the sum of the degree $2(\scs-\xings -2)$ terms in those states equals zero.
	In summary we have shown
	$$ \g_3={\scs+1\choose 2}+{\edges'\choose 2}-\scs \edges'+\mu -\th,$$
	which can be rewritten in the form given in the statement of the theorem.	
\end{proof}
	

	\section{Connected sum and disjoint union} \label{sec:split-connect}
	
	In this section we discuss the behavior of the coefficients $\g_1, \g_2, \g_3$ under disjoint union and connected sum of positive links.
	To avoid ambiguities we state the result on connected sums for knots only.  
		
We begin by recalling that by a result of Ozawa \cite[Theorem 1.4]{Ozawa} the decision of whether a positive link is non-split or prime can be made from its positive diagrams.
For terminology about link diagrams the reader is referred, for example, to \cite{Gutsbook}.
Let $L$ be a positive link and $D$ any positive diagram of $L$ that is reduced (i.e.\!  $D$ doesn't contain nugatory crossings).
Then  \cite[Theorem 1.4]{Ozawa}  gives the following.

\begin{itemize}
\item The link $L$ is prime if and only if $D$ is prime.
\item The link $L$ is non-split if and only if $D$ is non-split.
\end{itemize}
	
If a knot $K$ is the connected sum of prime knots $\connsum_{j=1}^p K_j$, then $p$ is the \emph{prime decomposition number} of $K$. The knots $K_i$ are the prime factors of $K$. Similarly, if a link $L$ is the disjoint union of non-split links $L_1,\cdots, L_s$ then  $s$ is the \emph{splitting number} of $L$.

If $K$ is a positive knot with prime factors  $K_1,\dots, K_p$, by  \cite[Theorem 1.4]{Ozawa}, $K$ admits a positive reduced diagram $D$ that is a connected sum $D=\connsum_{j=1}^p D_j$ where $D_j$ is a reduced positive diagram of $K_i$.

By Theorem 
\ref{thm.third}, we can compute  the quantities $\g_2(K)\coloneqq \scs(D)-\edges'(D)$ and $\lambda(K)\coloneqq \mu(D)-\theta(D)$ from $D$. Similarly,    $\gamma_2(K_j)=\scs_j-\edges'_j\coloneqq 
\scs(D_j)-\edges'(D_j)$ and 
$\lambda(K_j)\coloneqq \mu_j-\theta_j$, where $\mu_j-\theta_j\coloneqq \mu(D_j)-\theta(D_j)$ for $j=1\ldots, p$. Theorem \ref{thm.third} also implies that all these quantities are invariants of $K$ and $K_i$ respectively. Our next result shows that these invariants behave well under connected sums.
	
\begin{thm}\label{thm.connsum} Let $K$ be a positive knot with prime factors  $K_1,\dots, K_p$. The following hold:

\begin{enumerate}[(1)]
	
\item 	$\g_2(K)=1-p+\sum_{j=1}^p\g_2(K_j)$,
\smallskip

\item $\lambda(K)=\sum_{j=1}^p \lambda(K_j)$,

\smallskip

\item $\g_3(K)={\frac {(\g_2(K)+1)\,  \g_2(K)}{2}} + \lambda(K)$.
\end{enumerate}
\end{thm}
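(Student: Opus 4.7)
The plan is to first pass from $K$ to a convenient diagram via Ozawa's primeness result, and then track how the graph invariants $\scs$, $e'$, $\mu$, $\theta$ transform under the connect sum of diagrams. By \cite[Theorem 1.4]{Ozawa}, $K$ admits a reduced positive diagram $D = \connsum_{j=1}^p D_j$ where each $D_j$ is a reduced positive diagram of the prime factor $K_j$. Since $\gamma_2$, $\lambda$ and $\gamma_3$ depend only on the link, it suffices to work with this specific diagram, and by induction on $p$ it further suffices to verify parts (1) and (2) in the case $p=2$. Write $D = D_1 \# D_2$.

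For part (1), the connect sum is formed along an arc meeting the outer boundary of each $D_j$, and this identifies one Seifert circle from each summand into a single Seifert circle of $D$, so $\scs(D) = \scs(D_1) + \scs(D_2) - 1$. The crossings of $D$ are partitioned into those from $D_1$ and those from $D_2$, and no new adjacency between distinct Seifert circles is created, so the reduced edge set of $\G_W(D)$ is the disjoint union of the reduced edge sets of $\G_W(D_1)$ and $\G_W(D_2)$; that is, $e'(D) = e'(D_1) + e'(D_2)$. Subtracting yields $\gamma_2(D) = \gamma_2(D_1) + \gamma_2(D_2) - 1$, and (1) follows by induction on $p$.

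For part (2), the multiplicity of each reduced edge of $\G_W(D)$ equals its multiplicity in the summand it came from, so $\mu(D) = \mu(D_1) + \mu(D_2)$ at once. The subtle point is $\theta$: the Seifert graph $\G_W(D)$ is obtained from $\G_W(D_1)$ and $\G_W(D_2)$ by identifying one vertex of each, and because the arcs used to form the connect sum lie on the outer boundary of each $D_j$, the cyclic order of half-edges at the merged vertex in $\G_W(D)$ consists of all half-edges from $\G_W(D_1)$ appearing consecutively, followed by all half-edges from $\G_W(D_2)$. Consequently, any pair of reduced edges in $\G_W'(D)$ with one edge from $\G_W'(D_1)$ and the other from $\G_W'(D_2)$ is either disjoint or separated at the merged vertex (mixing index one), and therefore contributes nothing to $\theta$. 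Thus $\theta(D) = \theta(D_1) + \theta(D_2)$, whence $\lambda(D) = \lambda(D_1) + \lambda(D_2)$, and iteration proves (2). This cyclic-order observation at the merged vertex is the only genuinely non-formal step in the proof.

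Finally, part (3) is an immediate application of Theorem \ref{thm.third} to the knot $K$: that theorem, applied to the diagram $D$, gives $\gamma_3(K) = \binom{\gamma_2(K) + 1}{2} + \mu(D) - \theta(D) = \frac{(\gamma_2(K)+1)\gamma_2(K)}{2} + \lambda(K)$, which is exactly the identity asserted in (3).
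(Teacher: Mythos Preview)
Your proof is correct, but it takes a genuinely different route from the paper's. The paper argues algebraically: it uses the HOMFLY connected-sum relation, specialized via Equation~\eqref{eq.homfly}, to obtain $[3]\,\lla K\#K_{p+1}\rra=\lla K\rra\,\lla K_{p+1}\rra$, expands both sides to three leading terms, and reads off the recursions for $\g_2$ and $\g_3$ (and hence for $\lambda$) by matching coefficients. Your proof is instead purely combinatorial: you track how the Seifert-graph data $\scs$, $\edges'$, $\mu$, $\theta$ behave under connect sum of diagrams, with the key observation that at the merged vertex the half-edges coming from $D_1$ and from $D_2$ occupy consecutive blocks in the cyclic order, so no new mixed pairs are created. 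Your approach is more elementary---it avoids the HOMFLY specialization entirely and stays within the graph-theoretic framework of Theorem~\ref{thm.third}---while the paper's approach has the advantage of exhibiting the additivity as a direct consequence of a general multiplicativity property of the invariant, which would extend without modification to higher coefficients or other specializations.
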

		
	\begin{proof}
		We proceed by induction on $p$. The base case $p=1$, is exactly Theorem \ref{thm.third}. 
		
		Inductively, assume the claim holds for all positive knots with prime decomposition number at most $p$.  Next, consider a positive knot with prime decomposition 
$\connsum_{j=1}^{p+1} K_j$, where by  \cite[Theorem 1.4]{Ozawa} each $K_i$ is a positive prime knot.

 Let $D$ be a positive diagram of $K=\connsum_{j=1}^{p} K_j$ and $D_{p+1}$ be  a prime, positive diagram of $K_{p+1}$.

 Consider the split link $L$ consisting of the disjoint union of  $K$ and $K_{p+1}$ with diagram $D'$ given by the disjoint union of diagrams $D$ and $D_{p+1}$.
  By computing the web invariant for each of $K$ and $K_{p+1}$ in $L$ disjoint from the other, we have $\lla L\rra=\lla K\rra\lla K_{p+1}\rra$.
   Write \[\lla K_{p+1}\rra= q^{2(\scs_{p+1}-\xings_{p+1})}+\g_2(K_{p+1})q^{2(\scs_{p+1}-\xings_{p+1})} +\g_{3}(K_{p+1}) q^{2(\scs_{p+1}-\xings_{p+1})} + \mbox{lower order terms}\]
		and by induction 
		\begin{align*}
			\lla K\rra =&~ q^{2(\scs-\xings)}+\g_2(K) q^{2(\scs-\xings-1)}+
			\left({\g_2(K)+1\choose 2}+\lambda(K)\right)q^{2(\scs-\xings-2)}
			\\&+ \mbox{lower order terms}\,.
		\end{align*}
		Expanding the product of invariants yields
		\[
		\lla K\rra\lla K_{p+1}\rra = q^{2(\scs''-\xings'')}+\g_2'q^{2(\scs''-\xings''-1)}+\g_3'q^{2(\scs''-\xings''-1)}+ \mbox{lower order terms}
		\]
		where $\scs''\coloneqq \scs+\scs_{p+1}$, $\xings'':=\xings+\xings_{p+1}$, $\g_2'\coloneqq \g_2(K)+\g_2(K_{p+1})$, $\lambda'\coloneqq \lambda+\lambda(K_{p+1})=\sum_{j=1}^{p+1}\lambda(K_j)$, and 
		\[
		\g_3'={\g_2(K)+1\choose 2}+{\g_2(K_{p+1})+1\choose 2}+\g_2(K)\, \g_2(K_{p+1})+\lambda'={\g_2(K)+\g_2(K_{p+1})+1\choose 2}+\lambda'\,.
		\]
		Now the HOMFLY skein relation and the specialization to the $\sl_3$ invariant in Equations \eqref{eq.skein} and \eqref{eq.homfly} imply that
		\begin{equation}\label{eq.connsum}
			[3]\lla K\#K_{p+1}\rra=\lla K\rra\lla K_{p+1}\rra\,.
		\end{equation}
		Expanding $[3]=q^2+1+q^{-2}$ now yields
		\[
		\lla K\#K_{p+1}\rra = q^{2(\scs''-\xings''-1)}
		+
		(\g_2'-1)q^{2(\scs''-\xings''-2)}+(\g_3'-\g_2')q^{2(\scs''-\xings''-3)}+ \mbox{lower order terms}\,.
		\]
		The second coefficient is given by
		\[
		\g_2'-1=\g_2(K)+\g_2(K_{p+1})-1=1-p+\sum_{j=1}^p \g_{2}(K_j)+\g_{2}(K_{p+1})-1=1-(p+1)+\sum_{j=1}^{p+1}  \g_{2}(K_j).
		\]
	Similarly, the third coefficient of $\lla K\#K_{p+1}\rra $ is equal to
		\begin{align*}
		{\g_2(K)+\g_2(K_{p+1})+1\choose 2}+\lambda'-\g_2(K)-\g_2(K_{p+1})={\g_2(K)+\g_2(K_{p+1})\choose 2}+\lambda'=
		{\g_2'\choose 2}+\lambda'
		\end{align*}
		which completes the induction.
	\end{proof}

\begin{rem} The proof of Theorem \ref{thm.connsum}  easily adapts to show that if a positive link $L$ is a connected sum of positive prime links $L_1\ldots L_k$ then formulae
$(1)-(3)$ hold.
\end{rem}

	\medskip

	
	\section{Obstructing positive braiding} \label{sec:braids}
	A particularly interesting class of positive links is the class of links that can be represented as closures of positive braids.
	It been known for a long time that positive closed braids are fibered \cite{Stallings} and that not all positive knots are fibered nor are all positive fibered knots the closure of positive braids. 
	
	Positive braids have been studied extensively in low dimensional topology and the question  of determining which links can be represented by positive closed braids has been studied considerably.
	For instance, recently, they have been surfaced in the studies around the ``$L$-space conjecture" that relates taut foliations on 3-manifolds to Floer theoretic invariants (see \cite{Kr} and references therein).
	
	\begin{thm}\label{braid} The following hold:
	\begin{enumerate} [(1)]
	
	\item if a link $L$ of splitting number $s$ is the closure of a positive closed braid, then $\g_1(L)=1$ and  $\g_2(L)=s$,
	
	\smallskip
	
	\item if a knot  $K$ of prime decomposition number $p$ is a positive closed braid, then $\g_1(K)=\g_2(K)=1$  and $\g_3(K)=p+1$.
	\end{enumerate}
	\end{thm}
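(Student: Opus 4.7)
The proof has two parts.

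For part (1), the plan is to compute the Seifert graph $\G_W$ of the standard closed-braid diagram of $L$ and apply Theorem \ref{thm.second}. If $L$ is the closure of a positive braid on $n$ strands with $k_i$ occurrences of $\sigma_i$ ($1 \leq i \leq n-1$), then the Seifert circles are the $n$ strands themselves, so $\scs = n$, and the reduced Seifert graph $\G_W'$ is a subgraph of the path $P_n$ in which the edge $\{v_i,v_{i+1}\}$ is present precisely when $k_i \geq 1$. The connected components of $\G_W'$ correspond to maximal contiguous intervals of indices $i$ with $k_i \geq 1$, and since generators from different intervals commute, the braid factors as a product of sub-braids supported on disjoint strand blocks whose closures occupy disjoint regions of $S^3$. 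Each such block has a connected, reduced, positive diagram, so Ozawa's theorem identifies its closure as a non-split link. Consequently the number of components of $\G_W'$ equals the splitting number $s$ of $L$, and Theorem \ref{thm.second} yields $\g_1(L) = 1$ and $\g_2(L) = \scs - \edges' = s$.

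For part (2), since $K$ is a knot we have $s=1$, and part (1) gives $\g_2(K)=1$. Theorem \ref{thm.connsum}(3) then specializes to $\g_3(K) = 1 + \lambda(K)$, so the statement $\g_3(K) = p + 1$ is equivalent to $\lambda(K) = p$. I plan to invoke Cromwell's visual-primeness theorem for closed positive braids: the prime factors $K_1,\ldots,K_p$ of $K$ are themselves closed positive braid knots, realized by consecutive sub-words $\beta_1,\ldots,\beta_p$ of a positive braid word for $K$, each supported on a consecutive strand range. This makes each $K_j$ a closed positive braid knot, so Theorem \ref{thm.connsum}(2) gives $\lambda(K) = \sum_j \lambda(K_j)$, and it suffices to show $\lambda(K')=1$ for every prime closed positive braid knot $K'$.

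To establish this final claim I would split into cases on the number of strands $n'$ of $K'$. If $n'=2$ then $K'$ is a $(2,k)$-torus knot with $k \geq 3$ odd; its reduced Seifert graph has a single edge of multiplicity $k \geq 2$ and no pair of adjacent edges, giving $\mu=1$, $\theta=0$, and $\lambda(K')=1$. For $n' \geq 3$, the key observation is that primality of $K'$ forces every internal vertex of $\G_W$ to be \emph{mixed}: otherwise a separated internal vertex $v_\ell$ would, after commuting non-adjacent Artin generators in the cyclic braid word, allow the braid to be rewritten as a product $uv$ in which $u$ uses only $\sigma_1,\ldots,\sigma_{\ell-1}$ and $v$ uses only $\sigma_\ell,\ldots,\sigma_{n'-1}$, exhibiting $K'$ as a non-trivial connected sum. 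Granting this, $\theta = n' - 2$. Since the alternating-chunk pattern at a mixed vertex of mixing index $m \geq 2$ forces each incident edge to carry multiplicity at least two, and every reduced edge is adjacent to at least one internal vertex when $n' \geq 3$, we obtain $\mu = n' - 1$. Therefore $\lambda(K') = (n'-1)-(n'-2) = 1$.

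The main obstacle is justifying the primality criterion invoked above: that a prime closed positive braid knot on $n' \geq 3$ strands has every internal vertex of its Seifert graph mixed. This is essentially the contrapositive of the connected-sum decomposition sketched parenthetically in the previous paragraph, and while it is a consequence of Cromwell's visual-primeness theorem, a fully self-contained argument would require an explicit cyclic rearrangement of the positive braid word using the commutation relations $\sigma_i\sigma_j = \sigma_j\sigma_i$ for $|i-j|\geq 2$.
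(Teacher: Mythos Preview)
Your proof is correct and follows essentially the same strategy as the paper's. For part~(1) you bypass the paper's appeal to Stallings' theorem and Theorem~\ref{thm.fiber} by observing directly that $\G_W'$ for a closed positive braid is a forest whose components enumerate the split factors; this is marginally more elementary but amounts to the same computation, since Theorem~\ref{thm.fiber} itself identifies $\g_2=1$ with $\G_W'$ being a tree. For part~(2) your argument is essentially identical to the paper's, including your case split on $n'=2$ versus $n'\geq 3$ and your derivation of $\mu=n'-1$ from the fact that a mixed pair forces both incident edges to have multiplicity at least two; the ``main obstacle'' you flag (primality forces every internal vertex to be mixed) is also only asserted in the paper, with the same contrapositive connected-sum justification you sketch, citing Cromwell/Ozawa.
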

	\begin{proof} We begin with the proof of claim (1). \ By \cite{Ozawa} (see also \cite{positiveprime}) $L$ has a diagram $D$ as a closed braid that realizes the splitting number $s$.
	That is, $D$ is the disjoint union of closed {positive} braid diagrams $D_1, \ldots, D_s$, where $D_j$ represents a link $L_j$.
	By Theorem \ref{thm.third} we have $\g_1(L)=1$ and $\g_2(L)=\scs(D)-\edges'(D)=\chi(\G'_W(D))$.
	Now we have  $$\g_2(L)=\chi(\G'_W(D))=\sum_{j=1}^s \chi(\G'_W(D_j))=\sum_{j=1}^s \g_2(L_i)= s,$$
	where the last equality follows from the fact that $L_i$ is fibered \cite{Stallings} and by Theorem \ref{thm.fiber}.
	\smallskip
	
	Now we prove part (2). The claim that  $\g_1(K)=\g_2(K)=1$ follows from part (1).
	Again by \cite{Ozawa, positiveprime}, $K$ has a diagram $D$  that can be written as a connected sum of $p$ prime positive closed braids.

	Suppose $D$ is prime, i.e.\! $p=1$.
	By direct examination we can see that the graph ${\mathbb G}'_S(D)$ is a tree.
	Primeness implies that there is an edge in the Seifert graph between each adjacent Seifert circle determined by the braid. It also implies that each pair of adjacent reduced edges is mixed. Thus 
	$\mu=\edges'=\scs-1$ and $\th=\scs-2$. We now have $\g_3=2$, by Theorem \ref{thm.third}. Note also that $\lambda(K)=\mu-\th=1$.
	
	For $p>1$, $\lambda(K_i)=1$ for each prime factor of $K$. Now apply Theorem \ref{thm.connsum}(2) to get  $\lambda(K)=p$. Then by Theorem \ref{thm.connsum}(3) and since $\g_2(K)=1$, we get $\g_3(K)=p+1$.
	\end{proof}
	
		The following generalizes  \cite[Corollary 3]{alternatingtorus}.
		
		\begin{cor} \label{altbraid}Let $L$ be a non-split link that admits an alternating positive closed braid diagram.
		Then $L$ is a
		connected 
		sum of  $(2,n)$ torus links.
		\end{cor}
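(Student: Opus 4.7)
The plan is to combine the fibered conclusion drawn from the positive-braid and non-split hypotheses with a direct analysis of what alternation imposes on the braid word, forcing the braid index down to two. Since $L$ is non-split its splitting number is one, so Theorem~\ref{braid}(1) yields $\gamma_2(L)=1$, and Theorem~\ref{thm.fiber} then says the reduced Seifert graph $\G'_W(D)$ of the given diagram $D$ is a tree. For a closed positive braid in $B_n$ the Seifert circles are the braid strands, so $\G'_W(D)$ is a subgraph of the path on $n$ vertices connecting consecutive strand positions; being a tree with $n$ vertices it must equal this whole path, and in particular every generator $\sigma_i$ with $1 \le i \le n-1$ appears in the braid word.

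The key step is to show that alternation forbids both $\sigma_i$ and $\sigma_{i+1}$ from occurring in the braid word. I would follow the overstrand of a $\sigma_i$ crossing: it exits in braid position $i+1$, and the next crossing it encounters (reading the word cyclically around the closure) is the next occurrence of a generator from $\{\sigma_i, \sigma_{i+1}\}$, since these are the only generators involving position $i+1$. At that subsequent crossing the strand goes under if the generator is $\sigma_i$ and over if it is $\sigma_{i+1}$, so alternation forces every $\sigma_i$ to be followed by another $\sigma_i$ in the cyclic subword on $\{\sigma_i, \sigma_{i+1}\}$. A parallel analysis at $\sigma_{i+1}$ crossings, following the understrand as it exits into position $i+1$, forces every $\sigma_{i+1}$ to be followed by another $\sigma_{i+1}$ in that same cyclic subword. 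Since a cyclic word over a two-letter alphabet in which each letter is followed only by itself can use only one letter, $\sigma_i$ and $\sigma_{i+1}$ cannot both appear.

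Combining the two paragraphs, if $n \ge 3$ then the first requires that $\sigma_1$ and $\sigma_2$ both occur while the second forbids it, a contradiction. Hence $n \le 2$, the braid word is $\sigma_1^m$ in $B_2$ for some $m \ge 1$, and $L = T(2,m)$, which is a connected sum of $(2,n)$ torus links (with a single summand) as claimed. The main obstacle is the alternation analysis in the second paragraph, specifically the bookkeeping needed to correctly identify the cyclic ``next'' generator encountered by each tracked strand at the braid closure; the remaining steps are routine consequences of the paper's earlier results.
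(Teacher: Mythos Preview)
Your argument is correct and takes a genuinely different route from the paper's proof. The paper argues via the third coefficient: for a prime alternating positive closed braid diagram it invokes Theorem~\ref{braid} to get $\gamma_2=1$ and $\gamma_3=2$, uses that alternating diagrams have $\theta=0$, and then reads off $\mu=1$ from the formula $\gamma_3=1+\mu-\theta$; together with the tree property of $\G'_W$ and primeness this forces two Seifert circles, hence $T(2,n)$. Your approach bypasses $\gamma_3$ entirely, instead tracking the over/under status of the strand in braid position $i+1$ to show directly that $\sigma_i$ and $\sigma_{i+1}$ cannot coexist in an alternating positive braid word, and then using non-splitness to force $n\le 2$.

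Your route is more elementary and in fact yields a slightly sharper conclusion: $L$ is a single $T(2,m)$, not merely a connected sum of such links, and you avoid the paper's reduction to the prime case via Ozawa's theorem. On the other hand, the paper's proof is written to illustrate the diagrammatic content of $\gamma_3$, which is the point of the surrounding section. One small remark: your first paragraph's appeal to Theorem~\ref{braid}(1) and Theorem~\ref{thm.fiber} is not really needed, since the fact that every $\sigma_i$ occurs follows immediately from connectedness of the diagram (which is forced by non-splitness of $L$); the heart of your proof is the over/under bookkeeping in the second paragraph.
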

		\begin{proof}
By our earlier discussion, a reduced  alternating positive closed braid diagram of $L$ will be either prime or a connected sum of
prime alternating  positive closed braid diagrams. Hence it is sufficient to prove that the only prime links that admit  alternating  positive closed braid diagrams are $(2,n)$ torus links.

  Let $D=D(L)$ be a prime alternating positive closed braid  diagram. By Theorem \ref{braid}, $\g_2=1$ and $\g_3=2$.
	Since $D$ is alternating, $\theta=0$. Then $\g_3=1+\mu-\theta=2$ implies that $\mu=1$ and hence	exactly one edge in the reduced graph has multiplicity more than one in the unreduced Seifert graph. Since the reduced Seifert graph is a tree and $D$ is prime, $D$ consists of a pair of circles joined in a single twist region. Thus it is the standard 2-braid closure of a  $(2,n)$ torus link for some $n>1$.
\end{proof}

	Theorem \ref{braid} should be compared with the work of Ito \cite{Ito}, who finds obstruction to positive braiding in terms of the coefficients of the HOMFLY link polynomial.
\smallskip

\subsection{Concluding Remarks}
We have verified Theorem \ref{thm.third} for knots up to twelve crossings by evaluating the specializations of the HOMFLY polynomial from KnotInfo \cite{knotinfo}. 
The $\Atwo$ invariant detects braid positivity among the 33 positive fibered prime knots with at most twelve crossings. With the exception of $\mathsf{11_{n183}}$, $\g_3=1$ on all knots which have a positive diagram but are not the closure of a positive braid. These knots are tabulated in the table below.
	
\medskip

	\begin{table}[h!]
		\centering
		\label{table.g3}
		\vspace{-1\baselineskip}
		
		\[
		\begin{array}{c|c|c}
			\mbox{Knot} & \mbox{Positive Braid} & \g_3 \\\hline
			\mathsf{3_1} & \mbox{Y} & 2\\\hline
			\mathsf{5_1} & \mbox{Y} & 2\\\hline
			\mathsf{7_1} & \mbox{Y} & 2\\\hline
			\mathsf{8_{19}} & \mbox{Y}& 2\\\hline
			\mathsf{9_1} & \mbox{Y} & 2\\\hline
			\mathsf{10_{124}} & \mbox{Y} & 2\\\hline
			\mathsf{10_{139}} & \mbox{Y} & 2\\\hline
			\mathsf{10_{152}} & \mbox{Y} & 2\\\hline
			\mathsf{10_{154}} & \mbox{N} & 1\\\hline
			\mathsf{10_{161}} & \mbox{N} & 1\\\hline
			\mathsf{11_{a367}} & \mbox{Y} & 2
		\end{array}
		\qquad
		\begin{array}{c|c|c}
			\mbox{Knot} & \mbox{Positive Braid} & \g_3 \\\hline
			\mathsf{11_{n77}} & \mbox{Y} & 2\\\hline		
			\mathsf{11_{n183}} & \mbox{N} & 0\\\hline
			\mathsf{12_{n91}} & \mbox{N} & 1\\\hline
			\mathsf{12_{n105}} & \mbox{N} & 1\\\hline
			\mathsf{12_{n136}} & \mbox{N} & 1\\\hline
			\mathsf{12_{n187}} & \mbox{N} & 1\\\hline
			\mathsf{12_{n242}} & \mbox{Y} & 2\\\hline
			\mathsf{12_{n328}} & \mbox{N} & 1\\\hline
			\mathsf{12_{n417}} & \mbox{N} & 1\\\hline
			\mathsf{12_{n426}} & \mbox{N} & 1\\\hline
			\mathsf{12_{n472}} & \mbox{Y} & 2
		\end{array}
		\qquad
		\begin{array}{c|c|c}
			\mbox{Knot} & \mbox{Positive Braid} & \g_3 \\\hline
			\mathsf{12_{n518}} & \mbox{N} & 1\\\hline
			\mathsf{12_{n574}} & \mbox{Y} & 2\\\hline
			\mathsf{12_{n591}} & \mbox{N} & 1\\\hline
			\mathsf{12_{n640}} & \mbox{N} & 1\\\hline
			\mathsf{12_{n647}} & \mbox{N} & 1\\\hline
			\mathsf{12_{n679}} & \mbox{Y} & 2\\\hline
			\mathsf{12_{n688}} & \mbox{Y} & 2\\\hline
			\mathsf{12_{n694}} & \mbox{N} & 1\\\hline
			\mathsf{12_{n725}} & \mbox{Y} & 2\\\hline
			\mathsf{12_{n850}} & \mbox{N} & 1\\\hline
			\mathsf{12_{n888}} & \mbox{Y} & 2
		\end{array}
		\]
	\end{table}
	
\begin{example}As an example, we compute $\mathsf{11_{n183}}$ from the positive diagram in Figure \ref{fig.11n183}, generated using SnapPy \cite{SnapPy} from the presentation in \cite{unknotting}. We can see that $\scs=7$, $\edges'=6$, $\mu=6$, and $\th=7$. Thus, indeed $\g_2=1$ and $\g_3=0$.
\end{example}
	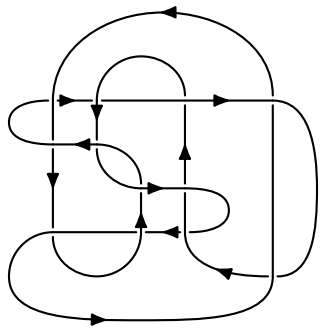
\begin{figure}[h!]
		\begin{tikzpicture}[very thick, line cap=round, line join=round,scale=0.42]
			\begin{scope}
				\draw[] (1.89, 5.27) .. controls (1.23, 5.27) and (0.50, 5.40) .. 
				(0.50, 5.97) .. controls (0.50, 6.49) and (1.14, 6.66) .. (1.74, 6.66);
				\draw[mid>] (2.04, 6.66) .. controls (2.41, 6.66) and (2.77, 6.66) .. (3.13, 6.66);
				\draw[] (3.44, 6.66) .. controls (4.31, 6.66) and (5.19, 6.66) .. (6.07, 6.66);
				\draw[mid>] (6.07, 6.66) .. controls (7.00, 6.66) and (7.93, 6.66) .. (8.85, 6.66);
				\draw[] (8.85, 6.66) .. controls (10.00, 6.66) and (10.25, 5.21) .. 
				(10.25, 3.88) .. controls (10.25, 2.57) and (10.13, 1.09) .. (9.00, 1.09);
				\draw[mid>] (8.70, 1.09) .. controls (7.43, 1.09) and (6.07, 1.39) .. (6.07, 2.49);
				\draw[] (6.07, 2.49) .. controls (6.07, 2.90) and (6.07, 3.31) .. (6.07, 3.73);
				\draw[mid>] (6.07, 4.03) .. controls (6.07, 4.86) and (6.07, 5.69) .. (6.07, 6.51);
				\draw[] (6.07, 6.82) .. controls (6.07, 7.53) and (5.42, 8.06) .. 
				(4.68, 8.06) .. controls (3.91, 8.06) and (3.28, 7.43) .. (3.28, 6.66);
				\draw[mid>] (3.28, 6.66) .. controls (3.28, 6.25) and (3.28, 5.84) .. (3.28, 5.42);
				\draw[] (3.28, 5.12) .. controls (3.28, 4.41) and (3.93, 3.88) .. (4.68, 3.88);
				\draw[mid>] (4.68, 3.88) .. controls (5.14, 3.88) and (5.60, 3.88) .. (6.07, 3.88);
				\draw[] (6.07, 3.88) .. controls (6.74, 3.88) and (7.46, 3.76) .. 
				(7.46, 3.18) .. controls (7.46, 2.66) and (6.82, 2.49) .. (6.22, 2.49);
				\draw[mid>] (5.92, 2.49) .. controls (5.55, 2.49) and (5.19, 2.49) .. (4.83, 2.49);
				\draw[] (4.53, 2.49) .. controls (3.65, 2.49) and (2.77, 2.49) .. (1.89, 2.49);
				\draw[mid>] (1.89, 2.49) .. controls (1.12, 2.49) and (0.50, 1.86) .. 
				(0.50, 1.09) .. controls (0.50, -0.30) and (2.77, -0.30) .. 
				(4.68, -0.30) .. controls (6.58, -0.30) and (8.85, -0.30) .. (8.85, 1.09);
				\draw[] (8.85, 1.09) .. controls (8.85, 2.90) and (8.85, 4.71) .. (8.85, 6.51);
				\draw[mid>] (8.85, 6.82) .. controls (8.85, 8.45) and (7.16, 9.45) .. 
				(5.37, 9.45) .. controls (3.56, 9.45) and (1.89, 8.35) .. (1.89, 6.66);
				\draw[] (1.89, 6.66) .. controls (1.89, 6.25) and (1.89, 5.84) .. (1.89, 5.42);
				\draw[mid>] (1.89, 5.12) .. controls (1.89, 4.29) and (1.89, 3.47) .. (1.89, 2.64);
				\draw[] (1.89, 2.34) .. controls (1.89, 1.62) and (2.54, 1.09) .. 
				(3.28, 1.09) .. controls (4.05, 1.09) and (4.68, 1.72) .. (4.68, 2.49);
				\draw[mid>] (4.68, 2.49) .. controls (4.68, 2.90) and (4.68, 3.31) .. (4.68, 3.73);
				\draw[] (4.68, 4.03) .. controls (4.68, 4.75) and (4.03, 5.27) .. (3.28, 5.27);
				\draw[mid>] (3.28, 5.27) .. controls (2.82, 5.27) and (2.36, 5.27) .. (1.89, 5.27);
			\end{scope}

		\end{tikzpicture}
		\caption{A positive 12-crossing  diagram of the knot $\mathsf{11_{n183}}$.}\label{fig.11n183}
	\end{figure}
	
\begin{rem} We note that the invariant $\g_3$ does not detect all positive closed braids. For example, the authors in \cite{unknotting} define a family $\{{K}_n\}_{n\geq 1}$ of positive fibered knots. They use  \cite[Theorem 1.1]{Ito}, which relies on positivity properties of the 2-variable HOMFLY polynomial, to show that none of these knots is a closed positive braid. The knot $K_n$ has a positive diagram $D_n$ defined as cyclic Conway sum of  $2n+1$ copies of the tangle  shown  in \cite[Figure 8]{unknotting}.
Using these diagrams one can directly compute $\scs(K_n)=6n+5$, $\edges'(K_n)=6n+4$,  $\mu(D_n)=6n+4$, and $\th(D_n)=6n+3$. This gives $\g_3(K_n)=2$, for all $n\geq 1$.
\end{rem}	
\medskip

\bibliographystyle{alpha}
	\bibliography{references}		
	\end{document}